\newcommand{\dx}[0]{\,\mathrm{d}}
\newcommand{\fbsde}[0]{}
\newcommand{\esssup}[0]{\mathrm{ess}\,\mathrm{sup}}
\newcommand*{\IR}{\mathbb{R}}
\newcommand*{\R}{\mathbb{R}}
\def \1{\mathbf{1}}
\newcommand{\be}{\begin{eqnarray*}}
\newcommand{\ee}{\end{eqnarray*}}
\newcommand{\ben}{\begin{eqnarray}}
\newcommand{\een}{\end{eqnarray}}
\newcommand{\bi}{\begin{itemize}}
\newcommand{\ei}{\end{itemize}}
\newcommand{\tdiv}{D}
\newcommand{\bmdiv}{\mathfrak{D}}
\definecolor{color2}{gray}{0.7}
\newtheorem{thm}{Theorem}[section]
\newtheorem{lemma}[thm]{Lemma}
\newtheorem{propo}[thm]{Proposition}
\newtheorem{corollary}[thm]{Corollary}
\theoremstyle{definition}
\newtheorem{defi}[thm]{Definition}
\newtheorem{definition}[thm]{Definition}
\newtheorem{remark}[thm]{Remark}
\title{The method of decoupling fields generalized to higher spatial derivatives}
\author{Alexander Fromm\thanks{alexander.fromm@uni-jena.de}}
\affil{\small Institute for Mathematics, University of Jena, Ernst-Abbe-Platz 2, 07743 Jena, Germany}
\begin{document}

\maketitle

\begin{abstract}
This work studies the spatial derivatives of decoupling fields to strongly coupled forward-backward stochastic differential equations in a Brownian setting. We formally deduce the backward dynamics of the first and higher spatial derivatives. In addition, we study necessary conditions under which singularities in either one of them can occur while moving backwards in time.
\end{abstract}

\vspace{0.5cm}
\noindent \textbf{2010 Mathematics Subject Classification.} 60H30, 49J55, 93E20.

\smallskip
\noindent \textbf{Keywords.} forward-backward stochastic differential equation, decoupling field, stochastic control, Skorokhod embedding

\section*{Introduction}

Forward-backward stochastic differential equations (FBSDE) are a class of differential problems which appear in numerous areas of applied stochastics. Most notably, stochastic control problems are often reduced to FBSDE. 
In the so-called Markovian case these systems can also be viewed as stochastic formulations of a large class of partial differential equations, covering many phenomena in physics, chemistry and engineering.

Of particular interest are so-called coupled systems in which neither the forward equation nor the backward equation, which together form the FBSDE, can be solved independently of the other. Although decoupled or weakly coupled problems appear rather often in the literature, it is a general pattern that the FBSDE associated with a particular control problem can be transformed into a decoupled system only under special structural properties and, in general, strongly coupled FBSDE cannot be avoided.
It is a longstanding challenge to find conditions guaranteeing that a given fully coupled FBSDE possesses a solution. Sufficient conditions are provided e.g.\ in \cite{Ma1994}, \cite{Pardoux1999}, \cite{ma:yon:99}, \cite{peng:wu:99}, \cite{Delarue2002}, \cite{ma:wu:zhang:15} (see also references therein). The method of decoupling fields, developped in \cite{Fromm2015} (see also the precursor articles \cite{ma:yin:zhan:12}, \cite{Fromm2013} and \cite{ma:wu:zhang:15}), is practically useful for determining whether a solution exists.
A decoupling field describes the functional dependence of the backward part $Y$ on the forward component $X$. If the coefficients of a fully coupled FBSDE satisfy a Lipschitz condition, then there exists a maximal non-vanishing interval possessing a solution triplet $(X,Y,Z)$ and a decoupling field with nice regularity properties. The method of decoupling fields consists in analyzing the dynamics of the decoupling field's gradient in order to determine whether the FBSDE has a solution on the whole time interval $[0, T]$.
The method can be successfully applied to various problems involving coupled FBSDE: In \cite{Proemel2015} solutions to a quadratic strongly coupled FBSDE with a two-dimensional forward equation are constructed to obtain solutions to the Skorokhod embedding problem for Gaussian processes with non-linear drift. In \cite{2017arXiv171106033F}
the problem of utility maximization in incomplete markets is treated for a general class of utility functions via construction of solutions to the associated coupled FBSDE. In \cite{ankirchner:hal-01500311}, the method is used to obtain solutions to the problem of optimal position targeting for general cost functionals and in \cite{ankirchner:hal-01615043}, the problem of optimal control of diffusion coefficients is treated using decoupling fields.

In all these applications the key step is to obtain the dynamics of the spatial derivative of the decoupling field evaluated along the forward process and to show that as a process it satisfies a backward SDE. In that it is similar to the backward process which is in fact the decoupling field itself evaluated along the forward process. It is natural to ask under what conditions higher order spatial derivatives of the decoupling field exist and whether they satisfy a backward equation as well. 

Apart from scientific curiosity such study of higher spatial derivatives has significance from the point of view of various practical applications: One major motivation is that the FBSDE the decoupling field is constructed for is usually induced by an underlying problem the solvability of which at times depends on certain regularity properties of the solution to the FBSDE which in turn are reduced to regularity properties of the associated decoupling field. For instance, in \cite{Proemel2015} classical solutions to the FBSDE entail weak solutions to the Skorokhod embedding problem, while the construction of strong solutions requires differentiability of the decoupling field in time and space. This can be reduced to differentiability in space up to a sufficiently high order. However, showing spatial differentiability does become a bottle neck when generalizing the results of \cite{Proemel2015}, i.e. when constructing strong solutions to the Skorokhod embedding problem for more general diffusions, as a case by case study of higher derivatives becomes increasingly technical and complicated without a sufficiently general theoretical machinery specifically designed for this purpose. In other words, a goal is to streamline the arguments of section 4.2 of \cite{Proemel2015} and do so for a more general setting.

Probably the greatest motivation for studying spatial derivatives of decoupling fields and their dynamics comes from the subject of numerical analysis of FBSDE: In order to treat a forward-backward system numerically some discretization in time and space must occur to keep the processing time and capacity finite. This means in particular that for any given moment in time the decoupling field is to be evaluated or estimated at finitely many points only. In order to approximate the decoupling field at points not belonging to a given grid it is natural to apply some form of interpolation. In the one-dimensional case linear interpolation, even though limited in precision, seems natural, but is not straightforward to generalize in a higher-dimensional setting. Instead, it is more natural to approximate the field in a neighborhood of a given grid point using its Taylor expansion in space. For this Taylor expansion, however, derivatives of higher orders are needed.

A key finding of this work is that, fortunately, these higher order spatial derivatives of a decoupling field at a given point are not more complicated to obtain than the value of the decoupling field itself at the same point, since these spatial derivatives evaluated along the forward process do satisfy a backward SDE and, therefore, behave similarly to the backward process $Y$. A central aim of this paper is to rigorously show, under reasonable conditions and in a general setting, that this is in fact the case. We also explicitly deduce the dynamics of these adjoint backward equations. It is our intention to use this in higher order approximation schemes for coupled FBSDE, which are currently under development.

This paper is structured as follows: In section \ref{SLC} we briefly sum up the theory of decoupling fields under \emph{standard Lipschitz conditions} (SLC), which works as a brief introduction to the topic of decoupling fields in case the reader is not familiar with this approach to FBSDE.
In section \ref{MLLC} we discuss decoupling fields under so-called \emph{modified local Lipschitz conditions} (MLLC), which is a theory derived from the one introduced in section \ref{SLC}. The MLLC theory is more appropriate for numerical schemes as the parameter functions of the system are deterministic, but do not have to be Lipschitz continuous in the control process. In section \ref{notations} we introduce some notations needed in the subsequent sections. In section \ref{firstD} we deduce the dynamics of the first spatial derivative along the forward process. In section \ref{higherD} we heuristically deduce dynamics of higher derivatives before studying them more rigorously in section \ref{mainR} (see e.g.\ Proposition \ref{roles2}). In a sense section \ref{mainR} is a generalization of the aforementioned MLLC theory to higher derivatives. The relaxation of Lipschitz continuity under MLLC comes in handy as the BSDEs satisfied by the spatial derivatives of the decoupling field are not Lipschitz continuous in general even if the initial FBSDE satisfies SLC. 

An important finding of section \ref{mainR} is that while it is possible that the first and/or the second spatial derivative of the decoupling field "explodes" at some moment in time as we move backwards away from the terminal condition, such a singularity cannot occur in the third and higher spatial derivatives if it has not already occurred in the first two and the condition $L_{\sigma,z}=0$ is satisfied (Theorem \ref{tightExp}). Thus, after having proven existence of a twice weakly differentiable and sufficiently regular decoupling field on an interval the existence and boundedness of all higher derivatives of the decoupling field follows automatically if the parameters of the problem are sufficiently smooth (Corollary \ref{highergood}, see also Remark \ref{highergoodRem}). Related statements are shown for the case $L_{\sigma,z}>0$ as well.

Finally, in section \ref{examp} we apply the theory developed in section \ref{mainR} to the FBSDE considered in \cite{Proemel2015} to recover the results on spatial differentiability of the decoupling field already proven in \cite{Proemel2015} but now reduced to a much tighter argumentation employing the theory from section \ref{mainR}. This simplification opens the door to an extension of \cite{Proemel2015} to more general diffusions, which is, however, left to future research.

\section{Decoupling fields under SLC}\label{SLC}

For a fixed finite time horizon $T>0$, we consider a complete filtered probability space $(\Omega,\mathcal{F},(\mathcal{F}_t)_{t\in[0,T]},\mathbb{P})$, where 
$\mathcal{F}_0$ consists of all null sets, $(W_t)_{t\in[0,T]}$ is a $d$-dimensional Brownian motion and $\mathcal{F}_t:=\sigma(\mathcal{F}_0,(W_s)_{s\in [0,t]})$ with $\mathcal{F}:=\mathcal{F}_T$. The dynamics of an FBSDE is given by
\begin{align*}
  X_s&=X_{0}+\int_{0}^s\mu(r,X_r,Y_r,Z_r)d r+\int_{0}^s\sigma(r,X_r,Y_r,Z_r)d W_r,\\
  Y_t&=\xi(X_T)-\int_{t}^{T}f(r,X_r,Y_r,Z_r)d r-\int_{t}^{T}Z_r dW_r,
\end{align*}
for $s,t \in [0,T]$ and $X_0 \in \mathbb{R}^n$, where $(\xi,(\mu,\sigma,f))$ are measurable functions such that 
\begin{align*}
  \xi &\colon\Omega\times\mathbb{R}^n \to \mathbb{R}^m, &
  \mu &\colon [0,T]\times\Omega\times\mathbb{R}^n\times\mathbb{R}^m\times\mathbb{R}^{m\times d}\to \mathbb{R}^n,\\
  \sigma&\colon [0,T]\times\Omega\times\mathbb{R}^n\times\mathbb{R}^m\times\mathbb{R}^{m\times d}\to \mathbb{R}^{n\times d},&
  f&\colon [0,T]\times\Omega\times\mathbb{R}^n\times\mathbb{R}^m\times\mathbb{R}^{m\times d}\to \mathbb{R}^m,
\end{align*}
for $d,n,m\in\mathbb{N}$. Throughout the whole section $\mu$, $\sigma$ and $f$ are assumed to be progressively measurable with respect to $(\mathcal{F}_t)_{t\in[0,T]}$, i.e. $\mu\mathbf{1}_{[0,t]},\sigma\mathbf{1}_{[0,t]},f\mathbf{1}_{[0,t]}$ must be $\mathcal{B}([0,T])\otimes\mathcal{F}_t\otimes\mathcal{B}(\mathbb{R}^n)\otimes\mathcal{B}(\mathbb{R}^m)\otimes\mathcal{B}(\mathbb{R}^{m\times d})$ - measurable for all $t\in[0,T]$.

A decoupling field comes with an even richer structure than just a classical solution $(X,Y,Z)$.

\begin{definition}\label{def:decoupling field}
  Let $t\in[0,T]$. A function $u\colon [t,T]\times\Omega\times\mathbb{R}^n\to\mathbb{R}^m$ with $u(T,\cdot)=\xi$ a.e. is called \emph{decoupling field} for $\fbsde (\xi,(\mu,\sigma,f))$ on $[t,T]$ if for all $t_1,t_2\in[t,T]$ with $t_1\leq t_2$ and any $\mathcal{F}_{t_1}$-measurable $X_{t_1}\colon\Omega\to\mathbb{R}^n$ there exist progressively measurable processes $(X,Y,Z)$ on $[t_1,t_2]$ such that
  \begin{align}\label{eq:decoupling}
    X_s&=X_{t_1}+\int_{t_1}^s\mu(r,X_r,Y_r,Z_r) d r+\int_{t_1}^s\sigma(r,X_r,Y_r,Z_r)d  W_r,&\nonumber\\
    Y_s&=Y_{t_2}-\int_{s}^{t_2}f(r,X_r,Y_r,Z_r) d r-\int_{s}^{t_2}Z_r d W_r,& \nonumber \\
    Y_s&=u(s,X_s),
  \end{align}
  a.s.\ for all $s\in[t_1,t_2]$. In particular, we want all integrals to be well-defined.
\end{definition}

Some remarks about this definition are in place.
\begin{itemize}
  \item The first equation in \eqref{eq:decoupling} is called the \emph{forward equation}, the second the \emph{backward equation} and the third will be referred to as the \emph{decoupling condition}.
  \item Note that, if $t_2=T$, we get $Y_T=\xi(X_T)$ a.s.\ as a consequence of the decoupling condition together with $u(T,\cdot)=\xi$. At the same time $Y_T=\xi(X_T)$ together with the decoupling condition implies $u(T,\cdot)=\xi$ a.e.
  \item If $t_2=T$ we can say that a triplet $(X,Y,Z)$ solves the FBSDE, meaning that it satisfies the forward and the backward equation, together with $Y_T=\xi(X_T)$. This relationship $Y_T=\xi(X_T)$ is referred to as the \emph{terminal condition}. 
\end{itemize}

In contrast to classical solutions of FBSDEs, decoupling fields on adjacent intervals can be pasted together (see e.g.\ Lemma 2.1.2 of \cite{Fromm2015}).


We want to remark that, if $u$ is a decoupling field and $\tilde{u}$ is a modification of $u$, i.e. for each $s\in[t,T]$ the functions $u(s,\omega,\cdot)$ and $\tilde{u}(s,\omega,\cdot)$ coincide for almost all $\omega\in\Omega$, then $\tilde{u}$ is also a decoupling field to the same problem. 
Hence, $u$ could also be referred to as a class of modifications and a progressively measurable and in some sense 
right-continuous representative exists if the decoupling field is Lipschitz continuous in $x$ (Lemma 2.1.3 in \cite{Fromm2015}).

For the following we need to fix further notation.

Let $I\subseteq [0,T]$ be an interval and $u: I\times\Omega\times\mathbb{R}^n\rightarrow \mathbb{R}^m$ a map such that $u(s,\cdot)$ is measurable for every $s\in I$. We define
\begin{equation*}
  L_{u,x}:=\sup_{s\in I}\inf\{L\geq 0\,|\,\textrm{for a.a. }\omega\in\Omega: |u(s,\omega,x)-u(s,\omega,x')|\leq L|x-x'|\textrm{ for all }x,x'\in\mathbb{R}^n\},
\end{equation*}
where $\inf \emptyset:=\infty$. We also set $ L_{u,x}:=\infty$ if $u(s,\cdot)$ is not measurable for every $s\in I$. One can show that $L_{u,x}<\infty$ is equivalent to $u$ having a modification which is truly Lipschitz continuous in $x\in\mathbb{R}^n$.

We denote by $L_{\sigma,z}$ the Lipschitz constant of $\sigma$ w.r.t.\ the dependence on the last component $z$ and w.r.t.\ the Frobenius norms on $\mathbb{R}^{m\times d}$ and $\mathbb{R}^{n\times d}$. We set $L_{\sigma,z}=\infty$ if $\sigma$ is not Lipschitz continuous in $z$. 

By $L_{\sigma,z}^{-1}=\frac{1}{L_{\sigma,z}}$ we mean $\frac{1}{L_{\sigma,z}}$ if $L_{\sigma,z}>0$ and $\infty$ otherwise.

For an integrable real valued random variable $F$ the expression $\mathbb{E}_t[F]$ refers to $\mathbb{E}[F|\mathcal{F}_t]$, while $\mathbb{E}_{t,\infty}[F]$ refers to $\esssup\,\mathbb{E}[F|\mathcal{F}_t]$, which might be $\infty$, but is always well defined as the infimum of all constants $c\in[-\infty,\infty]$ such that $\mathbb{E}[F|\mathcal{F}_t]\leq c$ a.s. Additionally, we write $\|F\|_\infty$ for the essential supremum of $|F|$.

Finally for a matrix $A\in\mathbb{R}^{N\times n}$ and a vector $v\in S^{n-1}$ we define $|A|_v:=|Av|$ as the norm of $A$ in the direction $v$, where $S^{n-1}$ is the $(n-1)$ - dimensional sphere. 

In practice it is important to have explicit knowledge about the regularity of $(X,Y,Z)$. For instance, it is important to know in which spaces the processes live, and how they react to changes in the initial value. 

\begin{definition}\label{regulSLC}
  Let $u\colon [t,T]\times\Omega\times\mathbb{R}^n\to\mathbb{R}^m$ be a decoupling field to $\fbsde(\xi,(\mu,\sigma,f))$.
  \begin{enumerate}
   \item We say $u$ to be \emph{weakly regular} if $L_{u,x}<L_{\sigma,z}^{-1}$ and $\sup_{s\in[t,T]}\|u(s,\cdot,0)\|_{\infty}<\infty$.
   \item A weakly regular decoupling field $u$ is called \emph{strongly regular} if for all fixed $t_1,t_2\in[t,T]$, $t_1\leq t_2,$ the processes $(X,Y,Z)$ arising in \eqref{eq:decoupling}  are a.e.\ unique and satisfy
   \begin{equation}\label{strongregul1}
      \sup_{s\in [t_1,t_2]}\mathbb{E}_{t_1,\infty}[|X_s|^2]+\sup_{s\in [t_1,t_2]}\mathbb{E}_{t_1,\infty}[|Y_s|^2]
      +\mathbb{E}_{t_1,\infty}\left[\int_{t_1}^{t_2}|Z_s|^2 d s\right]<\infty,
   \end{equation}
   for each constant initial value $X_{t_1}=x\in\mathbb{R}^n$. In addition they are required to be measurable as functions of $(x,s,\omega)$ and even weakly differentiable w.r.t.\ $x\in\mathbb{R}^n$ such that for every $s\in[t_1,t_2]$ the mappings $X_s$ and $Y_s$ are measurable functions of $(x,\omega)$ and even weakly differentiable w.r.t.\ $x$ such that
   \begin{align}\label{strongregul2}
     &\esssup_{x\in\mathbb{R}^n}\sup_{v\in S^{n-1}}\sup_{s\in [t_1,t_2]}\mathbb{E}_{t_1,\infty}\left[\left|\frac{\dx}{\dx x}X_s\right|^2_v\right]<\infty, \nonumber\allowdisplaybreaks\\
     &\esssup_{x\in\mathbb{R}^n}\sup_{v\in S^{n-1}}\sup_{s\in [t_1,t_2]}\mathbb{E}_{t_1,\infty}\left[\left|\frac{\dx}{\dx x}Y_s\right|^2_v\right]<\infty, \nonumber\\
     &\esssup_{x\in\mathbb{R}^n}\sup_{v\in S^{n-1}}\mathbb{E}_{t_1,\infty}\left[\int_{t_1}^{t_2}\left|\frac{\dx}{\dx x}Z_s\right|^2_v\dx s\right]<\infty.
   \end{align}
   \end{enumerate}
\end{definition}

Under suitable conditions a rich existence, uniqueness and regularity theory for decoupling fields can be developed. The basis of the theory is Theorem \ref{locexist} below, which is
proven in Chapter 2 of \cite{Fromm2015}.
\smallskip\\
{\bf Assumption (SLC):} $(\xi,(\mu,\sigma,f))$ satisfies \emph{standard Lipschitz conditions} \textup{(SLC)} if
\begin{enumerate}
  \item $(\mu,\sigma,f)$ are Lipschitz continuous in $(x,y,z)$ with Lipschitz constant $L$,
  \item $\left\|\left(|\mu|+|f|+|\sigma|\right)(\cdot,\cdot,0,0,0)\right\|_{\infty}<\infty$,
  \item $\xi\colon \Omega\times\mathbb{R}^n\to \mathbb{R}^m$ is measurable such that $\|\xi(\cdot,0)\|_{\infty}<\infty$ and $L_{\xi,x}<L_{\sigma,z}^{-1}$.
\end{enumerate}

\begin{thm}[\cite{Fromm2015}, Theorem 2.2.1]\label{locexist}
  Suppose $(\xi,(\mu,\sigma,f))$ satisfies \textup{(SLC)}. Then there exists a time $t\in[0,T)$ such that $\fbsde (\xi,(\mu,\sigma,f))$ has a unique (up to modification) decoupling field $u$ on $[t,T]$ with $L_{u,x}<L_{\sigma,z}^{-1}$ and $\sup_{s\in [t,T]}\|u(s,\cdot,0)\|_{\infty}<\infty$.
\end{thm}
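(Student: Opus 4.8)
The plan is to prove Theorem~\ref{locexist} by a Banach fixed-point argument on a sufficiently short terminal interval $[T-\delta,T]$. I would fix a constant $\ell$ with $L_{\xi,x}<\ell<L_{\sigma,z}^{-1}$ and work in the complete metric space $\mathcal{M}_\ell$ of equivalence classes (under modification) of progressively measurable maps $v\colon[T-\delta,T]\times\Omega\times\mathbb{R}^n\to\mathbb{R}^m$ that are Lipschitz in $x$ with $L_{v,x}\le\ell$ and satisfy $\sup_s\|v(s,\cdot,0)\|_\infty<\infty$, equipped with a suitable supremum-type distance. On $\mathcal{M}_\ell$ I would define a solution operator $\Phi$ whose unique fixed point is, by construction, the decoupling field: given a candidate $v$, one substitutes the decoupling ansatz $Y_r=v(r,X_r)$ into the coefficients and, for an arbitrary initial pair $(t_1,x)$, solves the resulting $v$-frozen system
\[
X_s = x + \int_{t_1}^s \mu(r,X_r,v(r,X_r),Z_r)\dx r + \int_{t_1}^s \sigma(r,X_r,v(r,X_r),Z_r)\dx W_r,
\]
\[
Y_s = \xi(X_T) - \int_s^T f(r,X_r,v(r,X_r),Z_r)\dx r - \int_s^T Z_r\dx W_r,
\]
and sets $\Phi(v)(t_1,x):=Y_{t_1}$.

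The central difficulty, and the step I expect to be the main obstacle, is the dependence of $\sigma$ on the control $z$: even after freezing the $y$-slot by $v(r,X_r)$, the forward diffusion still involves $Z_r$, so the forward and backward parts of the $v$-frozen system remain coupled. Its resolution uses the assumption $\ell\,L_{\sigma,z}<1$ in an essential way. Intuitively, one eliminates $Z$ through the relation $Z_s=\left(\tfrac{\dx}{\dx x}v\right)(s,X_s)\,\sigma(s,X_s,v(s,X_s),Z_s)$, an implicit equation of the form $z=A\,\sigma(\cdots,z)$ with $\|A\|\le\ell$; since $\ell\,L_{\sigma,z}<1$ its right-hand side is a contraction in $z$, so $Z$ is pinned down and $\sigma$ can be replaced by an effective, Lipschitz diffusion coefficient $\widetilde\sigma(s,x)$, turning the forward equation into a genuinely decoupled SDE. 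As $v$ need only be Lipschitz and not differentiable, I would carry this out rigorously by solving the $v$-frozen system directly via a contraction on $Z$: perturbing $Z$ changes $\sigma$ by at most $L_{\sigma,z}|\Delta Z|$, which propagates through the forward flow into the terminal value and back through the Lipschitz BSDE, the self-referential feedback factor being bounded by $\ell\,L_{\sigma,z}<1$; all remaining contributions carry factors of order $\delta$ or $\sqrt\delta$ over the short interval and are controlled by It\^o's isometry and Gronwall's inequality.

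With $\Phi$ well-defined I would verify the two fixed-point hypotheses. First, invariance: standard a priori estimates for SDEs and Lipschitz BSDEs, using the boundedness of $(\mu,\sigma,f)$ at the origin and of $\xi(\cdot,0)$ (and the bounded constant $(1-\ell\,L_{\sigma,z})^{-1}$ from the step above), give $\sup_s\|\Phi(v)(s,\cdot,0)\|_\infty<\infty$; and a Gronwall estimate on the flow derivatives $\tfrac{\dx}{\dx x}X_s$ and $\tfrac{\dx}{\dx x}Y_s$ shows that the Lipschitz constant of $\Phi(v)$, which equals $L_{\xi,x}$ at $s=T$, stays below $\ell$ on all of $[T-\delta,T]$ provided $\delta$ is small, so $\Phi$ maps $\mathcal{M}_\ell$ into itself. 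Second, contraction: for $v,\bar v\in\mathcal{M}_\ell$ I would estimate the difference of the two associated triplets; here $v-\bar v$ enters only through the coefficient slots, all of which come with factors of order $\delta$ or $\sqrt\delta$ over the short interval, while the $z$-in-$\sigma$ feedback contributes only the bounded constant $(1-\ell\,L_{\sigma,z})^{-1}$. Hence the Lipschitz constant of $\Phi$ tends to $0$ as $\delta\to0$ and is in particular strictly below $1$ for $\delta$ small enough.

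The Banach fixed-point theorem then yields a unique $u\in\mathcal{M}_\ell$ with $\Phi(u)=u$. Unwinding the construction and invoking the flow property of the decoupled forward SDE together with uniqueness of the Lipschitz BSDE shows that the decoupling relation $Y_s=u(s,X_s)$ holds along trajectories started at arbitrary times, so $u$ is a genuine decoupling field on $[T-\delta,T]$ satisfying $L_{u,x}\le\ell<L_{\sigma,z}^{-1}$ and $\sup_s\|u(s,\cdot,0)\|_\infty<\infty$. Uniqueness up to modification on $[T-\delta,T]$ follows from the uniqueness of the fixed point together with the uniqueness of solutions to the decoupled forward SDE and the Lipschitz BSDE. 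Setting $t:=T-\delta$ completes the proof.
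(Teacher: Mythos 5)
Your proposal is correct and follows essentially the same route as the proof the paper cites (the paper does not reproduce the argument but refers to Chapter 2 of \cite{Fromm2015}, where local existence is likewise obtained by a Banach fixed-point iteration on the space of Lipschitz-in-$x$ candidate fields over a short terminal interval, with the inner coupling through $Z$ in $\sigma$ closed by the smallness condition $L_{\xi,x}L_{\sigma,z}<1$). The one step to tighten is uniqueness: the fixed-point argument as you state it yields uniqueness only within $\mathcal{M}_\ell$, whereas the theorem asserts uniqueness among all decoupling fields with $L_{u,x}<L_{\sigma,z}^{-1}$; this is repaired by noting that any such field is a fixed point of $\Phi$ on $\mathcal{M}_{\ell'}$ with $\ell'$ the maximum of the two Lipschitz constants, rerunning the contraction on a correspondingly shorter interval, and concluding by a small-interval induction.
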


A brief discussion of existence and uniqueness of classical solutions on sufficiently small intervals can be found in Remark 2.2.4 in \cite{Fromm2015}. 

This local theory for decoupling fields can be systematically extended to global results based on fairly simple ``small interval induction'' arguments (Lemma 2.5.1 and 2.5.2 in \cite{Fromm2015}).
In order to have a notion of global existence we need the following definition:

\begin{definition} We define the maximal interval $I_{\max}\subseteq[0,T]$ of the problem given by  $(\xi,(\mu,\sigma,f))$ as the union of all
intervals $[t,T]\subseteq[0,T]$, such that there exists a weakly regular decoupling field $u$ on $[t,T]$.
\end{definition}

Note that the maximal interval might be open to the left. Also, let us remark that we define a decoupling field on such an interval as a mapping which is a decoupling field
on every compact subinterval containing $T$. Similarly we can define weakly and strongly regular decoupling fields as mappings which restricted to an arbitrary
compact subinterval containing $T$ are weakly (or strongly) regular decoupling fields in the sense of the definitions given above.

Finally, we have global existence and uniqueness on the maximal interval:

\begin{thm}[Global existence in weak form, \cite{Fromm2015}, Theorem 5.1.11 and Lemma 5.1.12]\label{globalexist}
 Let $(\xi,(\mu,\sigma,f))$ satisfy SLC. Then there exists a
unique weakly regular decoupling field $u$ on $I_{\max}$. This $u$ is even strongly regular.
Furthermore, either $I_{\max}=[0,T]$ or $I_{\max}=(t_{\min},T]$, where $0 \leq t_{\min} < T$. In the latter case we have $\lim_{t\downarrow t_{\min}} L_{u(t,\cdot),x}=L_{\sigma,z}^{-1}$.
\end{thm}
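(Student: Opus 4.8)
The plan is to bootstrap the local result, Theorem \ref{locexist}, into the global statement by a pasting-and-induction scheme. First I would isolate the two structural facts on which everything rests: that weakly regular decoupling fields on adjacent intervals sharing an endpoint concatenate into a single weakly regular field on their union (the pasting property noted after Definition \ref{def:decoupling field}), and that Theorem \ref{locexist} is genuinely \emph{quantitative}, in the sense that the length $T-t$ of the interval on which a field is guaranteed to exist admits a lower bound depending only on the Lipschitz constant $L$, on $L_{\sigma,z}$, and on the gap $L_{\sigma,z}^{-1}-L_{\xi,x}$ between the terminal Lipschitz constant and the critical threshold. Establishing this quantitative form is where I would invest the most care: one reads off from the contraction argument behind Theorem \ref{locexist} that the Lipschitz constant of the constructed field stays controlled as long as the input datum is admissible, and that the admissible step size degenerates only as the input Lipschitz constant approaches $L_{\sigma,z}^{-1}$.

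Next I would run the induction. Starting from $T$ with terminal datum $\xi$, Theorem \ref{locexist} yields a weakly regular field on some $[t_0,T]$. Viewing $u(t_0,\cdot)$ as a new terminal condition — it again satisfies the (SLC) hypotheses since $L_{u(t_0,\cdot),x}<L_{\sigma,z}^{-1}$ and $\|u(t_0,\cdot,0)\|_\infty<\infty$ — I re-apply the local theorem to extend to the left and then paste. Iterating produces a decreasing sequence of left endpoints $t_0>t_1>t_2>\cdots$ together with a single weakly regular field on $\bigcup_k[t_k,T]$. This union lies in $I_{\max}$, and conversely local uniqueness (again from Theorem \ref{locexist}, applied on each compact subinterval) forces any weakly regular field on any $[t,T]\subseteq[0,T]$ to agree with the constructed one on the overlap; this simultaneously gives uniqueness up to modification on all of $I_{\max}$ and shows the union exhausts $I_{\max}$.

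The dichotomy and the blow-up characterization are the heart of the matter, and I expect the blow-up statement to be the main obstacle. Set $t_{\min}:=\inf I_{\max}$. If $t_{\min}\in I_{\max}$, a weakly regular field exists on $[t_{\min},T]$; were $t_{\min}>0$, the quantitative extension step of the previous paragraph would push the field strictly to the left of $t_{\min}$, contradicting minimality, so $t_{\min}=0$ and $I_{\max}=[0,T]$. Otherwise $t_{\min}\notin I_{\max}$ and $I_{\max}=(t_{\min},T]$ with $0\le t_{\min}<T$, and here I would argue by contradiction that the Lipschitz constant must tend to the threshold. Suppose $\limsup_{t\downarrow t_{\min}}L_{u(t,\cdot),x}<L_{\sigma,z}^{-1}$. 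Then along a sequence $t\downarrow t_{\min}$ the input data $u(t,\cdot)$ have Lipschitz constants uniformly bounded away from the threshold and — a point one must verify separately — uniformly bounded $\|u(t,\cdot,0)\|_\infty$; by the quantitative local existence the extension step size is then bounded below by a fixed $\delta>0$, so the field extends to $[t_{\min}-\delta/2,T]$, contradicting maximality. The subtlety is that $L_{u(t,\cdot),x}$ is a priori only a family of constants with a left limit, so I must rule out oscillation and control the companion quantity $\|u(t,\cdot,0)\|_\infty$ at the same time; this is precisely where the quantitative a priori estimates from the local theory are indispensable, and it yields that the limit exists and equals $L_{\sigma,z}^{-1}$.

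Finally, strong regularity is upgraded from weak regularity by a separate, more analytic argument carried out on each compact $[t_1,t_2]\subseteq I_{\max}$. One differentiates the forward–backward system in the initial value $x$, obtains a linearized FBSDE for $\left(\frac{\dx}{\dx x}X,\frac{\dx}{\dx x}Y,\frac{\dx}{\dx x}Z\right)$, and derives the moment bounds \eqref{strongregul1}--\eqref{strongregul2} via standard BSDE a priori estimates combined with $L_{u,x}<L_{\sigma,z}^{-1}$, which keeps the relevant fixed-point map contractive and the linearized coefficients integrable; uniqueness of $(X,Y,Z)$ follows from the same contraction. I would first close this on subintervals short enough for the contraction to hold and then paste, so that strong regularity on $I_{\max}$ follows from strong regularity on each compact subinterval containing $T$.
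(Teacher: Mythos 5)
Your plan is correct and coincides with the strategy the paper attributes to its source: Theorem \ref{globalexist} is imported from \cite{Fromm2015} (Theorem 5.1.11 and Lemma 5.1.12), and the paper explicitly describes the proof as a ``small interval induction'' built on the quantitative form of the local result Theorem \ref{locexist}, with pasting of adjacent fields, a contradiction argument for the blow-up alternative, and differentiation in the initial value for strong regularity --- exactly the ingredients you identify. The subtleties you flag (uniform control of $\|u(t,\cdot,0)\|_\infty$ along the extension and ruling out oscillation of $L_{u(t,\cdot),x}$ near $t_{\min}$) are precisely the points where the uniform lower bound on the step size is used, so your proposal matches the cited argument in both structure and emphasis.
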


\section{Decoupling fields under MLLC}\label{MLLC}

In this section we briefly summarize the key results of the abstract theory of Markovian decoupling fields, we rely on later in the paper. The presented theory is derived from the SLC theory of Chapter 2 of \cite{Fromm2015} and is proven in \cite{Proemel2015}.


Let $(\Omega,\mathcal{F},\mathbb{P},(\mathcal{F}_t)_{t\in[0,T]})$ be as in the previous section. Again, we consider progressively measurable mappings $\mu$, $\sigma$, $f$ and a measurable $\xi$ with the same domains and target spaces as in section \ref{SLC}.
A problem given by $\xi,\mu,\sigma,f$ is said to be \emph{Markovian}, if these four functions are deterministic, i.e. depend on $t,x,y,z$ only.
In the Markovian case we can somewhat relax the Lipschitz continuity assumptions (SLC) made in the previous section and still obtain local existence together with uniqueness. What makes the Markovian case so special is the property
$$"Z_s=u_x(s,X_s)\cdot\sigma(s,X_s,Y_s,Z_s)"$$
which comes from the fact that $u$ becomes deterministic as well. This property allows us to bound $Z$ by a constant if we assume that $\sigma$ is bounded.

This potential boundedness of $Z$ in the Markovian case motivates the following definition, which allows to develop a theory for non-Lipschitz problems:

\begin{defi}
Let $\xi:\Omega\times\mathbb{R}^n\rightarrow\mathbb{R}^m$ be measurable and let $t\in[0,T]$.\\
We call a function $u:[t,T]\times\Omega\times\mathbb{R}^n\rightarrow\mathbb{R}^m$ with $u(T,\omega,\cdot)=\xi(\omega,\cdot)$ for a.a. $\omega\in\Omega$ a \emph{Markovian decoupling field} for $\fbsde (\xi,(\mu,\sigma,f))$ on $[t,T]$ if for all $t_1,t_2\in[t,T]$ with $t_1\leq t_2$ and any $\mathcal{F}_{t_1}$ - measurable $X_{t_1}:\Omega\rightarrow\mathbb{R}^n$ there exist progressive processes $X,Y,Z$ on $[t_1,t_2]$ such that the three equations \eqref{eq:decoupling} are satisfied a.s.\
for all $s\in[t_1,t_2]$ \underline{and such that $\|Z\|_\infty<\infty$ holds}.\\ In particular, we want all integrals to be well-defined and $X,Y,Z$ to have values in $\mathbb{R}^n$, $\mathbb{R}^m$ and $\mathbb{R}^{m\times d}$ respectively. \\
Furthermore, we call a function $u:(t,T]\times\mathbb{R}^n\rightarrow\mathbb{R}^m$ a Markovian decoupling field for $\fbsde(\xi,(\mu,\sigma,f))$ on $(t,T]$ if $u$ restricted to $[t',T]$ is a Markovian decoupling field for all $t'\in(t,T]$.
\end{defi}

A Markovian decoupling field is always a decoupling field in the standard sense as well. The only difference between the two notions is that we are only interested in $X,Y,Z$, where $Z$ is a.e.\ bounded.
Regularity for Markovian decoupling fields is defined very similarly to standard regularity: 


\begin{defi}\label{regulMLLC}
Let $u:[t,T]\times\Omega\times\mathbb{R}^n\rightarrow\mathbb{R}^m$ be a Markovian decoupling field to $\fbsde(\xi,(\mu,\sigma,f))$. We call $u$ \emph{weakly regular}, if
$L_{u,x}<L_{\sigma,z}^{-1}$ and $\sup_{s\in[t,T]}\|u(s,\cdot,0)\|_{\infty}<\infty$.

Furthermore, we call a weakly regular $u$ \emph{strongly regular} if for all fixed $t_1,t_2\in[t,T]$, $t_1\leq t_2,$ the processes $X,Y,Z$ arising in the defining property of a Markovian decoupling field
are a.e. unique for each \emph{constant} initial value $X_{t_1}=x\in\mathbb{R}^n$ and satisfy \eqref{strongregul1}. \\
 In addition they must be measurable as functions of $(x,s,\omega)$ and
even weakly differentiable w.r.t.\ $x\in\mathbb{R}^n$ such that for every $s\in[t_1,t_2]$ the mappings $X_s$ and $Y_s$ are measurable functions of $(x,\omega)$ and even weakly differentiable w.r.t.\ $x$ such that \eqref{strongregul2} is satisfied.

We can define weakly and strongly regular Markovian decoupling fields on a half-open interval $(t,T]$ as mappings which restricted to an arbitrary
compact subinterval containing $T$ are weakly (or strongly) regular Markovian decoupling fields in the sense of the definitions given above.
\end{defi}
For the following class of problems an existence and uniqueness theory is developed:
\begin{defi}
We say that $\xi,\mu,\sigma,f$ satisfy \emph{modified local Lipschitz conditions (MLLC)} if
\begin{itemize}
\item $\mu,\sigma,f$ are
\begin{itemize}
\item deterministic,
\item Lipschitz continuous in $x,y,z$ on sets of the form $[0,T]\times\mathbb{R}^n\times\mathbb{R}^{m} \times B$, where $B\subset \mathbb{R}^{m\times d}$ is an arbitrary bounded set
\item and such that $\|\mu(\cdot,0,0,0)\|_\infty,\|f(\cdot,0,0,0)\|_{\infty},\|\sigma(\cdot,\cdot,\cdot,0)\|_{\infty},L_{\sigma,z}<\infty$,
\end{itemize}
\item $\xi: \mathbb{R}^n\rightarrow \mathbb{R}^m$ satisfies $L_{\xi,x}<L_{\sigma,z}^{-1}$,
\end{itemize}
where $L_{\sigma,z}$ denotes the Lipschitz constant of $\sigma$ w.r.t.\ the dependence on the last component $z$ (and w.r.t.\ the Frobenius norms on $\mathbb{R}^{m\times d}$ and $\mathbb{R}^{n\times d}$).
\end{defi}

The following natural concept introduces a type of Markovian decoupling field for non-Lipschitz problems (non-Lipschitz in $z$), to which nevertheless standard Lipschitz results can be applied.

\begin{defi}
Let $u$ be a Markovian decoupling field for $\fbsde(\xi,(\mu,\sigma,f))$. We call $u$ \emph{controlled in $z$} if there exists a constant $C>0$ such that for all $t_1,t_2\in[t,T]$, $t_1\leq t_2$, and all initial values $X_{t_1}$, the corresponding processes $X,Y,Z$ from the definition of a Markovian decoupling field satisfy $|Z_s(\omega)|\leq C$,
for almost all $(s,\omega)\in[t,T]\times\Omega$. If for a fixed triple $(t_1,t_2,X_{t_1})$ there are different choices for $X,Y,Z$, then all of them are supposed to satisfy the above control.

We say that a Markovian decoupling field $u$ on $[t,T]$ is \emph{controlled in $z$} on a subinterval $[t_1,t_2]\subseteq[t,T]$ if $u$ restricted to $[t_1,t_2]$ is a Markovian decoupling field for $\fbsde(u(t_2,\cdot),(\mu,\sigma,f))$ that is controlled in $z$.

Furthermore, we call a Markovian decoupling field on an interval $(s,T]$ \emph{controlled in $z$} if it is controlled in $z$ on every compact subinterval $[t,T]\subseteq (s,T]$ (with $C$ possibly depending on $t$).
\end{defi}

The following important result allows us to connect the MLLC - case to SLC.

\begin{thm}[\cite{Proemel2015}, Theorem 3.16]\label{CONtrollM}
Let $\xi,\mu,\sigma,f$ satisfy (MLLC) and assume that there exists a weakly regular Markovian decoupling field $u$ to this problem on some interval $[t,T]$. Then $u$ is controlled in $z$.
\end{thm}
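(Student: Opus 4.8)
The engine of the proof is the Markovian identity
\[
  Z_s=u_x(s,X_s)\,\sigma(s,X_s,Y_s,Z_s)\qquad\text{for a.e. }(s,\omega),
\]
of which I shall in fact only use the pointwise bound $|Z_s|\le L_{u,x}\,|\sigma(s,X_s,Y_s,Z_s)|$. The plan is to feed this into the (globally) $z$-Lipschitz estimate for $\sigma$ and exploit the weak-regularity gap $L_{u,x}L_{\sigma,z}<1$ to turn the relation into a contraction for $Z$, whose fixed-point bound will be manifestly independent of $t_1,t_2$ and of the initial value $X_{t_1}$.

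First I would fix $t_1\le t_2$ in $[t,T]$, a constant initial value $X_{t_1}=x$, and an associated triple $(X,Y,Z)$. By the very definition of a Markovian decoupling field this triple satisfies $\|Z\|_\infty<\infty$, so $Z$ takes values in a fixed bounded set $B\subset\mathbb{R}^{m\times d}$, on which $(\mu,\sigma,f)$ are genuinely Lipschitz in $(x,y,z)$ by (MLLC). Truncating $\sigma$ in $z$ outside $B$ therefore produces an SLC problem to which the same triple is a solution, and I would use the strong-regularity theory of Theorems \ref{locexist} and \ref{globalexist} for this truncated problem to obtain weak differentiability of $X_s$ and $Y_s$ in the initial value on a small subinterval ending at $t_2$. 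Differentiating the decoupling relation $Y_s=u(s,X_s)$ by the chain rule $\frac{\dx}{\dx x}Y_s=u_x(s,X_s)\,\frac{\dx}{\dx x}X_s$ and comparing the diffusion coefficients of the resulting variational equations then yields the identity above; since the spatial Lipschitz constant $L_{u,x}$ bounds the operator norm of $u_x(s,\cdot)$, the inequality $|Z_s|\le L_{u,x}|\sigma(s,X_s,Y_s,Z_s)|$ follows, first on the small subinterval and then on all of $[t_1,t_2]$ by pasting. Making this identification rigorous for a merely Lipschitz (weakly, not a priori strongly regular) field $u$, together with the identification of our triple with the processes of the truncated SLC field, is the step I expect to be the main obstacle.

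With the inequality established the rest is routine. Using that $\sigma$ is globally $L_{\sigma,z}$-Lipschitz in $z$, I would estimate for a.e. $(s,\omega)$
\[
  |Z_s|\le L_{u,x}\bigl(\|\sigma(\cdot,\cdot,\cdot,0)\|_\infty+L_{\sigma,z}\,|Z_s|\bigr),
\]
and, since $\|Z\|_\infty<\infty$ and $L_{u,x}L_{\sigma,z}<1$, take the essential supremum and rearrange to obtain
\[
  \|Z\|_\infty\le\frac{L_{u,x}\,\|\sigma(\cdot,\cdot,\cdot,0)\|_\infty}{1-L_{u,x}L_{\sigma,z}}=:C.
\]
Because $C$ depends only on $L_{u,x}$, $L_{\sigma,z}$ and $\|\sigma(\cdot,\cdot,\cdot,0)\|_\infty$, all of which are finite by weak regularity and (MLLC) and none of which depend on $t_1$, $t_2$, $X_{t_1}$ or on the particular triple, the same constant bounds every admissible $(X,Y,Z)$. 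This is precisely the assertion that $u$ is controlled in $z$. In summary, the condition $L_{u,x}<L_{\sigma,z}^{-1}$ is exactly what renders the $Z$-recursion contractive, and the only genuine difficulty is legitimising the Markovian identity for a non-smooth decoupling field.
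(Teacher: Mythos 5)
Your overall strategy is the right one, and it is the one the literature this paper leans on actually uses: the quantitative bound you arrive at, $\|Z\|_\infty\leq L_{u,x}\|\sigma(\cdot,\cdot,\cdot,0)\|_\infty\left(1-L_{u,x}L_{\sigma,z}\right)^{-1}$, is exactly the estimate quoted in the proof of Theorem \ref{1-global} as a consequence of Lemma 2.5.14 of \cite{Fromm2015}, and your closing algebra (insert the $z$-Lipschitz bound for $\sigma$, use $\|Z\|_\infty<\infty$ to rearrange, observe that the resulting constant is independent of $t_1,t_2,X_{t_1}$ and of the particular triple) is correct. Note that the paper does not reprove this statement but imports it from \cite{Proemel2015}; the entire substance of that proof is the pointwise inequality $|Z_s|\leq L_{u,x}\,|\sigma(s,X_s,Y_s,Z_s)|$, which you correctly single out as the crux.

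The gap is in how you propose to obtain that inequality. Differentiating the decoupling condition $Y_s=u(s,X_s)$ with respect to the initial value gives, via the chain rule for weak derivatives, the relation $\partial_x Y_s=u_x(s,X_s)\,\partial_x X_s$ --- an identity between the \emph{variational} processes. Comparing martingale parts of the two sides of this identity yields a formula involving $\partial_x Z$ and the (a priori unknown) martingale part of $s\mapsto u_x(s,X_s)$; it does not yield $Z_s=u_x(s,X_s)\sigma_s$. Extracting the semimartingale decomposition of $u_x(\cdot,X)$ is precisely the content of Theorem \ref{derivdym}, which is considerably harder and requires hypotheses ($L_{\sigma,z}=0$ or $n=m=1$, classical differentiability of the coefficients) that are not available here. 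The inequality $|Z_s|\leq L_{u,x}|\sigma_s|$ is instead obtained (Lemma 2.5.14 of \cite{Fromm2015}) by a difference-quotient/conditional-covariance argument that uses only the Lipschitz continuity of $u$ in $x$: one represents $Z_s$ as the density of the covariation of $Y$ with $W$, i.e.\ as a limit of $h^{-1}\mathbb{E}_s[(Y_{s+h}-Y_s)(W_{s+h}-W_s)^\top]$, replaces $Y_{s+h}$ by $u(s+h,X_{s+h})$, subtracts the $\mathcal{F}_s$-measurably evaluated term $u(s+h,X_s)$ at no cost, and bounds the increment of $X$ by its diffusion part plus an $O(h)$ drift. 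This route needs no differentiability of $u$, no strong regularity, and works for arbitrary initial conditions --- which matters, because ``controlled in $z$'' quantifies over all $\mathcal{F}_{t_1}$-measurable initial values, whereas your argument is set up only for constant ones. A second, related defect is a circularity in your truncation step: to know that $u$ (restricted) is a weakly regular decoupling field of the truncated SLC problem --- which is what you need in order to invoke its strong regularity --- you must know that the truncation is passive for the triples associated with \emph{all} initial conditions, i.e.\ you need the uniform bound on $Z$ that you are trying to prove; the finiteness of $\|Z\|_\infty$ for one fixed triple does not suffice.
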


As applications of analogous results for SLC problems from Chapter 2 of \cite{Fromm2015} one shows

\begin{thm}[\cite{Proemel2015}, Theorem 3.17 and Theorem 3.18]\label{UNIqMREGulM}
Let $\xi,\mu,\sigma,f$ satisfy (MLLC).
\begin{enumerate}
\item Assume that there are two weakly regular Markovian decoupling fields $u^{(1)},u^{(2)}$ to this problem on some interval $[t,T]$.
Then $u^{(1)}=u^{(2)}$ (up to modifications).
\item Assume that there exists a weakly regular Markovian decoupling field $u$ to this problem on some interval $[t,T]$. Then $u$ is strongly regular.
\item Assume that there exists a weakly regular Markovian decoupling field $u$ on some interval $[t,T]$. Then for any initial condition $X_t=x\in\mathbb{R}^n$ there is a unique solution $(X,Y,Z)$ of the FBSDE on $[t,T]$ such that
$$\sup_{s\in[t,T]}\mathbb{E}[|X_s|^2]+\sup_{s\in[t,T]}\mathbb{E}[|Y_s|^2]+\|Z\|_\infty<\infty.$$
\end{enumerate}
\end{thm}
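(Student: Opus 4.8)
The plan is to reduce the \textup{(MLLC)} problem to one satisfying the standard Lipschitz conditions \textup{(SLC)} of Section~\ref{SLC}, so that all three assertions follow from the corresponding SLC results (Theorem~\ref{globalexist} and the uniqueness/strong-regularity statements of Chapter~2 of \cite{Fromm2015}). The device that makes this possible is the $z$-control: by Theorem~\ref{CONtrollM}, any weakly regular Markovian decoupling field $u$ under \textup{(MLLC)} is controlled in $z$, i.e.\ there is a constant $C$ such that every triplet $(X,Y,Z)$ arising in the defining property of $u$ satisfies $|Z|\le C$. Hence all the processes we care about never leave the region $\{|z|\le C\}$, on which the coefficients are already Lipschitz in $(x,y,z)$, and outside this region we are free to modify $\mu,\sigma,f$.

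Concretely, I would fix $C'>C$, let $\rho=\rho_{C'}$ be the ($1$-Lipschitz) radial projection onto the closed ball $\bar B_{C'}\subset\mathbb{R}^{m\times d}$, and set $\tilde\mu(t,x,y,z):=\mu(t,x,y,\rho(z))$, $\tilde\sigma(t,x,y,z):=\sigma(t,x,y,\rho(z))$ and $\tilde f(t,x,y,z):=f(t,x,y,\rho(z))$. Since the $(x,y,z)$-Lipschitz property of $(\mu,\sigma,f)$ on $[0,T]\times\mathbb{R}^n\times\mathbb{R}^m\times\bar B_{C'}$ together with the $1$-Lipschitz truncation makes $(\tilde\mu,\tilde\sigma,\tilde f)$ globally Lipschitz in $(x,y,z)$, and since $\rho(0)=0$ preserves the bounds $\|\mu(\cdot,0,0,0)\|_\infty,\|f(\cdot,0,0,0)\|_\infty,\|\sigma(\cdot,\cdot,\cdot,0)\|_\infty<\infty$, the truncated problem $(\xi,(\tilde\mu,\tilde\sigma,\tilde f))$ satisfies \textup{(SLC)}. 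Moreover $|\tilde\sigma(t,x,y,z)-\tilde\sigma(t,x,y,z')|\le L_{\sigma,z}|\rho(z)-\rho(z')|\le L_{\sigma,z}|z-z'|$ gives $L_{\tilde\sigma,z}\le L_{\sigma,z}$, so $L_{u,x}<L_{\sigma,z}^{-1}\le L_{\tilde\sigma,z}^{-1}$ and the weak regularity of $u$ (an intrinsic property of $u$) is preserved. Because $\tilde\mu,\tilde\sigma,\tilde f$ agree with $\mu,\sigma,f$ on $\{|z|\le C'\}$ and the $u$-triplets satisfy $|Z|\le C<C'$, these same triplets solve the truncated FBSDE with $u$ as decoupling condition; thus $u$ is a weakly regular decoupling field for the \textup{(SLC)} problem $(\xi,(\tilde\mu,\tilde\sigma,\tilde f))$ on $[t,T]$.

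Now I would invoke the SLC theory for the truncated problem. By Theorem~\ref{globalexist} it has a unique weakly regular decoupling field on its maximal interval, and that field is strongly regular; since $u$ is such a field on $[t,T]$, the interval $[t,T]$ lies in the maximal interval and $u$ coincides there with that unique strongly regular field. Assertion~1 follows at once: given two weakly regular Markovian decoupling fields $u^{(1)},u^{(2)}$ with control bounds $C_1,C_2$, choosing $C'>\max(C_1,C_2)$ makes both weakly regular decoupling fields of \emph{the same} truncated \textup{(SLC)} problem, whence $u^{(1)}=u^{(2)}$ up to modification by SLC uniqueness. For Assertions~2 and~3, strong regularity of the truncated field yields, for each constant initial value $X_t=x$, unique truncated triplets $(X,Y,Z)$ satisfying \eqref{strongregul1}--\eqref{strongregul2} together with the Markovian identity $Z_s=u_x(s,X_s)\,\tilde\sigma(s,X_s,Y_s,Z_s)$ characteristic of strongly regular Markovian fields.

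The crux, and the step I expect to be most delicate, is to verify that these truncated triplets are genuine \textup{(MLLC)} triplets, i.e.\ that the truncation is \emph{inactive} along them. Here I would use the Markovian identity together with the linear growth $|\tilde\sigma(t,x,y,z)|\le\|\sigma(\cdot,\cdot,\cdot,0)\|_\infty+L_{\sigma,z}|z|$ (valid since $|\rho(z)|\le|z|$) to obtain $|Z_s|\le L_{u,x}\big(\|\sigma(\cdot,\cdot,\cdot,0)\|_\infty+L_{\sigma,z}|Z_s|\big)$, and hence the a~priori bound $|Z_s|\le C_0:=\frac{L_{u,x}\,\|\sigma(\cdot,\cdot,\cdot,0)\|_\infty}{1-L_{u,x}L_{\sigma,z}}$, where $L_{u,x}L_{\sigma,z}<1$ by weak regularity. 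This bound is independent of $C'$, so after enlarging $C'$ beyond $\max(C,C_0)$ we get $\rho(Z)=Z$ along the triplets, whence they solve the \emph{original} FBSDE with $\|Z\|_\infty\le C_0<\infty$. Consequently \eqref{strongregul1}--\eqref{strongregul2} transfer verbatim to the \textup{(MLLC)} triplets, proving Assertion~2; and for each $X_t=x$ the truncated triplet furnishes a solution of the original FBSDE with $\sup_s\mathbb{E}[|X_s|^2]+\sup_s\mathbb{E}[|Y_s|^2]+\|Z\|_\infty<\infty$, while any competing solution with $\|Z\|_\infty<\infty$ is bounded in $Z$, hence (enlarging $C'$ once more) solves the truncated problem and coincides with it by SLC uniqueness of triplets, giving Assertion~3.
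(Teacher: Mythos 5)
Your proposal is correct and follows exactly the route the paper indicates: the theorem is cited from \cite{Proemel2015} and is obtained there "as an application of analogous results for SLC problems" by using the $z$-control of Theorem \ref{CONtrollM} to truncate the coefficients in $z$, reducing to the SLC theory of Chapter 2 of \cite{Fromm2015}, and then checking via the a priori bound $\|Z\|_\infty\leq L_{u,x}\|\sigma(\cdot,\cdot,\cdot,0)\|_\infty(1-L_{u,x}L_{\sigma,z})^{-1}$ (Lemma 2.5.14 of \cite{Fromm2015}) that the truncation is inactive. The only detail left at citation level is that, for assertion 3, a competing FBSDE solution with $\|Z\|_\infty<\infty$ must be shown to satisfy the decoupling condition before SLC uniqueness of triplets applies, which is likewise part of the standard SLC machinery.
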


\begin{defi}
Let $J_{\mathrm{max}}\subseteq[0,T]$ for $\fbsde(\xi,(\mu,\sigma,f))$ be the union of all intervals $[t,T]\subseteq[0,T]$ such that there exists a weakly regular Markovian decoupling field $u$ on $[t,T]$.
\end{defi}

\begin{thm}[Global existence in weak form, \cite{Proemel2015}, Theorem 3.21]\label{GLObalexistM}
Let $\xi,\mu,\sigma,f$ satisfy (MLLC). Then there exists a unique weakly regular Markovian decoupling field $u$ on $J_{\mathrm{max}}$. This $u$ is also controlled in $z$, strongly regular, deterministic and continuous. \\
Furthermore, either $J_{\mathrm{max}}=[0,T]$ or $J_{\mathrm{max}}=(s_{\mathrm{min}},T]$, where $0\leq s_{\mathrm{min}}<T$.
\end{thm}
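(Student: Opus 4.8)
The plan is to reduce the global statement to the already-established compact-interval theory (Theorems \ref{CONtrollM} and \ref{UNIqMREGulM}) via a gluing argument, and to handle the shape of $J_{\mathrm{max}}$ by a maximality/local-extension argument, mirroring the structure of the SLC result Theorem \ref{globalexist}. First I would clarify that $J_{\mathrm{max}}$ is genuinely an interval carrying a field. Set $A:=\{t\in[0,T]\,:\,\text{a weakly regular Markovian decoupling field exists on }[t,T]\}$. The key observation is that $A$ is up-closed: if $t\in A$ and $t\le t'\le T$, then restricting the field on $[t,T]$ to $[t',T]$ is again weakly regular, since $L_{u,x}$ is a supremum over $s$ that can only decrease under restriction and the bound on $u(\cdot,\cdot,0)$ is inherited, while boundedness of $Z$ is preserved. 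Hence $A$ is an interval with right endpoint $T$, and since $J_{\mathrm{max}}=\bigcup_{t\in A}[t,T]$ by definition, it equals $[s_{\mathrm{min}},T]$ or $(s_{\mathrm{min}},T]$ with $s_{\mathrm{min}}:=\inf A\ge 0$. This already produces the two admissible shapes; what remains is to decide when the left endpoint is attained.

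Next I would construct the field and prove uniqueness on $J_{\mathrm{max}}$. For each $t\in A$ let $u^{(t)}$ be the field on $[t,T]$, unique up to modification by Theorem \ref{UNIqMREGulM}(1). For $t\le t'$ in $A$ the restriction of $u^{(t)}$ to $[t',T]$ is weakly regular, hence coincides with $u^{(t')}$ up to modification; this consistency lets me define one field $u$ on $J_{\mathrm{max}}$ by $u:=u^{(t)}$ on each $[t,T]$. By construction $u$ restricted to any compact subinterval containing $T$ is weakly regular, which is precisely Definition \ref{regulMLLC} of weak regularity on a (possibly half-open) interval; uniqueness on $J_{\mathrm{max}}$ follows identically, since any competitor must agree with $u$ on every $[t,T]$. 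The remaining qualitative properties are inherited subinterval by subinterval: controlledness in $z$ from Theorem \ref{CONtrollM}, strong regularity from Theorem \ref{UNIqMREGulM}(2), and determinism from the Markovian local theory (a deterministic weakly regular field exists locally, and uniqueness forces $u$ to equal it). Continuity I would extract from strong regularity: from $u(s,X_s)=Y_s$ with $Y$ continuous in $s$ and $u$ uniformly Lipschitz in $x$, joint continuity in $(s,x)$ follows from the moment estimates \eqref{strongregul1}--\eqref{strongregul2} which control the $(s,x)$-dependence of $(X,Y)$.

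The main obstacle is the left-openness claim, namely that $s_{\mathrm{min}}>0$ forces $s_{\mathrm{min}}\notin J_{\mathrm{max}}$. I would argue by contradiction through local backward extension. Suppose a weakly regular field existed on $[s_{\mathrm{min}},T]$ with $s_{\mathrm{min}}>0$. By Theorem \ref{CONtrollM} it is controlled in $z$, so the relevant values of $Z$ stay in a fixed bounded set $B\subset\mathbb{R}^{m\times d}$ on which $\mu,\sigma,f$ are Lipschitz; smoothly truncating the coefficients outside $B$ produces an SLC problem whose decoupling field agrees with the original on the region actually visited. Weak regularity gives the strict inequality $L_{u(s_{\mathrm{min}},\cdot),x}<L_{\sigma,z}^{-1}$, so $u(s_{\mathrm{min}},\cdot)$ is an admissible terminal condition for the truncated SLC problem, and Theorem \ref{locexist} yields a weakly regular field on $[s_{\mathrm{min}}-\eps,T]$ for some $\eps>0$. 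Controlledness in $z$ on this short interval then lets me undo the truncation, producing a weakly regular Markovian field on $[s_{\mathrm{min}}-\eps,T]$ and contradicting $s_{\mathrm{min}}=\inf A$.

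The delicate points, where the MLLC-to-SLC reduction of \cite{Proemel2015} must be invoked with care, are verifying that the strict regularity inequality $L_{u,x}<L_{\sigma,z}^{-1}$ genuinely survives both the truncation and the backward step (so that the local existence theorem applies at time $s_{\mathrm{min}}$), and that the newly extended field is still controlled in $z$ on the small interval, so that it solves the untruncated problem rather than merely the truncated one. I expect these compatibility checks, rather than the gluing or the inheritance of properties, to carry the technical weight of the proof; note in particular that, unlike the SLC statement, no blow-up characterization of $L_{u(t,\cdot),x}$ as $t\downarrow s_{\mathrm{min}}$ is asserted here, so that finer analysis is not needed.
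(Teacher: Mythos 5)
The paper does not actually prove this theorem — it is quoted from \cite{Proemel2015} (Theorem 3.21) — but your plan (gluing the compact-interval fields via the uniqueness statement of Theorem \ref{UNIqMREGulM}, inheriting controlledness in $z$, strong regularity, determinism and continuity subinterval by subinterval, and ruling out a compact $J_{\mathrm{max}}\neq[0,T]$ by truncating in $z$ to reach an (SLC) problem and extending locally to the left) is exactly the strategy of the cited proof and of this paper's own generalization in the proof of Theorem \ref{1-global}. The single step you defer, namely that the cutoff is passive for the newly extended field, is closed in the standard way by the a priori estimate $\|Z\|_\infty\leq L_{u,x}\left\|\sigma(\cdot,X,Y,Z)\right\|_\infty\leq L_{u,x}\|\sigma(\cdot,\cdot,\cdot,0)\|_\infty(1-L_{u,x}L_{\sigma,z})^{-1}$ from Lemma 2.5.14 of \cite{Fromm2015}, whose right-hand side is controlled by quantities fixed before the cutoff radius is chosen (using a quantitative version of Theorem \ref{locexist} to keep $L_{u,x}$ below a prescribed level on the small interval), so your outline is correct as it stands.
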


The following result basically states that for a singularity $s_{\mathrm{min}}$ to occur $u_x$ has to "explode" at $s_{\mathrm{min}}$. It is the key to showing well-posedness for particular problems via contradiction.

\begin{thm}[\cite{Proemel2015}, Lemma 3.22]\label{EXPlosionM}
Let $\xi,\mu,\sigma,f$ satisfy (MLLC). If $J_{\mathrm{max}}=(s_{\mathrm{min}},T]$, then
$$\lim_{t\downarrow s_{\mathrm{min}}}L_{u(t,\cdot),x}=L_{\sigma,z}^{-1},$$
where $u$ is the unique weakly regular Markovian decoupling field from Theorem \ref{GLObalexistM}.
\end{thm}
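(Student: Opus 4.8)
The plan is to argue by contradiction, exploiting the fact that the maximal interval can be open to the left only if the decoupling field admits no weakly regular extension, and that under (MLLC) the sole obstruction to such an extension is the Lipschitz constant $L_{u(t,\cdot),x}$ reaching the threshold $L_{\sigma,z}^{-1}$. Since $u$ restricted to each compact subinterval $[t,T]\subseteq(s_{\mathrm{min}},T]$ is weakly regular, we have $L_{u(t,\cdot),x}<L_{\sigma,z}^{-1}$ for every $t\in(s_{\mathrm{min}},T]$, and hence $\limsup_{t\downarrow s_{\mathrm{min}}}L_{u(t,\cdot),x}\le L_{\sigma,z}^{-1}$ automatically. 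It therefore suffices to prove $\liminf_{t\downarrow s_{\mathrm{min}}}L_{u(t,\cdot),x}\ge L_{\sigma,z}^{-1}$; together with the trivial upper bound this yields existence of the limit and its value. So I assume for contradiction that there is a sequence $t_k\downarrow s_{\mathrm{min}}$ and a constant $L_1<L_{\sigma,z}^{-1}$ with $L_{u(t_k,\cdot),x}\le L_1$ for all $k$.

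First I would set up the extension. For each $k$ consider the Markovian problem $\fbsde(u(t_k,\cdot),(\mu,\sigma,f))$ with terminal time $t_k$ and terminal condition $\xi^{(k)}:=u(t_k,\cdot)$, which satisfies $L_{\xi^{(k)},x}=L_{u(t_k,\cdot),x}\le L_1<L_{\sigma,z}^{-1}$ and $\|\xi^{(k)}(\cdot,0)\|_\infty<\infty$. By Theorem \ref{CONtrollM} the restriction of $u$ to $[t_k,T]$ is controlled in $z$, and since $Z_s=u_x(s,X_s)\sigma(s,X_s,Y_s,Z_s)$ with $|u_x|\le L_1$, the relation $|Z|\le L_1\big(\|\sigma(\cdot,\cdot,\cdot,0)\|_\infty+L_{\sigma,z}|Z|\big)$ can be solved out to give a control constant $C=C(L_1,L_{\sigma,z},\|\sigma(\cdot,\cdot,\cdot,0)\|_\infty)$ that is \emph{uniform in $k$}. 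This uniform bound lets me truncate $\mu,\sigma,f$ in the $z$-variable outside a fixed ball $\{|z|\le C'\}$ with $C'>C$, producing a problem that satisfies (SLC) with a single fixed Lipschitz constant and for which any solution with $\|Z\|_\infty\le C$ coincides with a solution of the original (MLLC) problem.

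Next I would invoke local existence for the truncated (SLC) problem, Theorem \ref{locexist}, applied with terminal time $t_k$. The decisive point is that the length $\delta>0$ of the resulting existence interval depends only on the gap $L_{\sigma,z}^{-1}-L_1$ and on the now fixed Lipschitz and boundedness data of $(\mu,\sigma,f)$: the backward dynamics controlling $L_{u,x}$ admit a comparison with an autonomous differential inequality whose blow-up time is governed by exactly these quantities and is insensitive to the magnitude $\|\xi^{(k)}(\cdot,0)\|_\infty$. Hence one obtains, with the same $\delta$ for all $k$, a weakly regular Markovian decoupling field on $[t_k-\delta,t_k]\cap[0,T]$. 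Pasting it with $u$ on $[t_k,T]$ (concatenation of decoupling fields) produces a weakly regular field on $[\max(t_k-\delta,0),T]$: the quantity $\sup_s L_{u(s,\cdot),x}$ over the union is the maximum of two numbers each strictly below $L_{\sigma,z}^{-1}$, and $\sup_s\|u(s,\cdot,0)\|_\infty$ is the maximum of two finite numbers, so both defining requirements of weak regularity hold on the compact union.

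Finally, since $t_k\downarrow s_{\mathrm{min}}$, for all large $k$ we have $t_k-\delta<s_{\mathrm{min}}$, so the pasted field is weakly regular on an interval $[\max(t_k-\delta,0),T]$ whose left endpoint lies strictly to the left of $(s_{\mathrm{min}},T]$ (it is $<s_{\mathrm{min}}$ when $s_{\mathrm{min}}>0$, and equals $0\notin(0,T]$ when $s_{\mathrm{min}}=0$). Either way this contradicts the maximality of $J_{\mathrm{max}}=(s_{\mathrm{min}},T]$, forcing $\liminf_{t\downarrow s_{\mathrm{min}}}L_{u(t,\cdot),x}\ge L_{\sigma,z}^{-1}$ and completing the proof. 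The main obstacle is precisely the uniformity of $\delta$ in $k$: it rests on showing that the control-in-$z$ constant stays bounded along the sequence (so that one and the same (SLC) truncation serves all $k$) and that the (SLC) local-existence time depends only on the distance of $L_{\xi^{(k)},x}$ to the threshold and on the fixed coefficient data, rather than on the possibly growing terminal value $u(t_k,\cdot)$.
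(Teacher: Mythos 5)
The paper does not prove this statement itself; it imports it verbatim as Lemma 3.22 of \cite{Proemel2015}, so there is no in-paper proof to compare against line by line. Your argument is, however, a correct reconstruction of the standard proof, and it is structurally identical to the argument the paper \emph{does} carry out for the generalized version (the final paragraphs of the proof of Theorem \ref{1-global}): reduce to an (SLC) problem by a passive cutoff in $z$ justified by a uniform control-in-$z$ bound, obtain a local extension interval of length $\delta$ bounded away from zero, paste, and contradict maximality. The one step you flag yourself as the ``main obstacle'' --- that $\delta$ depends only on the gap $L_{\sigma,z}^{-1}-L_1$ and the fixed coefficient data, and not on $\|u(t_k,\cdot,0)\|_\infty$ --- is exactly what Theorem 4.2.17 and Remark 4.2.18 of \cite{Fromm2015} supply, and the paper relies on precisely this uniformity in its own extension argument (``in order to bound the size of the small interval away from zero we merely need\ldots''), so this is a citation to be filled in rather than a gap. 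Two small points worth tightening: the truncation radius $C'$ must dominate the control-in-$z$ constant of the \emph{newly constructed} field on $[t_k-\delta,t_k]$, whose Lipschitz constant may exceed $L_1$ slightly, so one should fix some $L_2$ with $L_1<L_2<L_{\sigma,z}^{-1}$, shrink $\delta$ so that the constructed field stays below $L_2$, and size $C'$ from $L_2$; and one should note that composing $\sigma$ with the projection onto a ball does not increase $L_{\sigma,z}$, so the threshold $L_{\sigma,z}^{-1}$ is not worsened by the cutoff. Neither affects the validity of the argument.
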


\section{Some important notations}\label{notations}

Let us introduce some notions and notations which are used in the subsequent sections.

Firstly, we work with generalized matrices: Let $k\in\mathbb{N}$ and $n_1,\ldots,n_k\in\mathbb{N}$ be natural numbers. For every $m\in\mathbb{N}$ we define $[m]:=\{1,\ldots,m\}$ which is a set of cardinality $m$. Now we define the space $\mathbb{R}^{n_1\times\ldots\times n_k}$ as the linear space of all mappings $A:[n_1]\times\ldots\times[n_k]\rightarrow\mathbb{R}$. Note that in case $k=2$ the mapping $A$ is a standard matrix. For $k=1$ we have a standard vector.

We can multiply generalized matrices: Let $A\in\mathbb{R}^{n_1\times\ldots\times n_k}$ and $B\in\mathbb{R}^{m_1\times\ldots\times m_l}$, where $k,l\in\mathbb{N}$, and assume that $m_1=n_k$. Then the product $A\cdot B$ is defined as the mapping $C\in \mathbb{R}^{n_1\times\ldots\times n_{k-1} \times m_2\times\ldots\times m_l}$ which satisfies 
$$ C(x_1,\ldots, x_{k-1},y_{2},\ldots,y_{l}):=\sum_{z=1}^{m_1}A(x_1,\ldots, x_{k-1}, z)B(z,y_2,\ldots,y_l), $$
where $x_i\in[n_i]$ for all $i=1,\ldots,k-1$ and where $y_i\in[m_i]$ for all $i=2,\ldots,l$.

This definition is consistent with standard matrix multiplication in case $k=l=2$. We may at times write $AB$ instead of $A\cdot B$ for simplicity.

A particularity of our analysis in this work is that we also consider products $A\cdot B$, $A\in\mathbb{R}^{n_1\times\ldots\times n_k}$ , $B\in\mathbb{R}^{m_1\times\ldots\times m_l}$, where $(n_{k-1},n_k)=(m_1,m_2)=(m,d)$, where $m,d$ are two fixed natural numbers which in the subsequent sections will have the same role is in sections \ref{SLC} and \ref{MLLC}. This product is defined as the mapping $C\in \mathbb{R}^{n_1\times\ldots\times n_{k-2} \times m_3\times\ldots\times m_l}$ which satisfies 
$$ C(x_1,\ldots, x_{k-2},y_{3},\ldots,y_{l}):=\sum_{(z_1,z_2)\in[m]\times[d]}A(x_1,\ldots, x_{k-2}, z_1,z_2)B(z_1,z_2,y_3,\ldots,y_l), $$
where $x_i\in[n_i]$ for all $i=1,\ldots,k-2$ and where $y_i\in[m_i]$ for all $i=3,\ldots,l$. 
We still write $A\cdot B$ for this product but indicate the application of it by writing $A\in\mathbb{R}^{n_1\times\ldots\times n_{k-2}\times (m\times d)}$ instead of simply $A\in\mathbb{R}^{n_1\times\ldots\times n_{k-2}\times m\times d}$ or by writing $B\in\mathbb{R}^{(m\times d) \times m_3\times\ldots\times m_l}$ instead of $B\in\mathbb{R}^{m\times d \times m_3\times\ldots\times m_l}$, or by setting the brackets in both. 

Note that if $A\in\mathbb{R}^{(m\times d) \times (m\times d)}$ and $B\in\mathbb{R}^{m\times d}$, then $B\mapsto A\cdot B$ describes a linear mapping on and to the linear space $\mathbb{R}^{m\times d}$. This linear mapping is invertible if and only if there exists an $A^{-1}\in\mathbb{R}^{(m\times d) \times (m\times d)}$ such that $C:=A\cdot A^{-1}\in\mathbb{R}^{(m\times d) \times (m\times d)}$ has the property that the associated linear mapping on $\mathbb{R}^{m\times d}$ is the identity mapping.

Now consider some $A\in\mathbb{R}^{n_1\times\ldots\times n_k}$ and assume that $n_i=n\in\mathbb{N}$ for all $i=l,\ldots,k$, where $l\in [k]$. Then we may at times write $A\in\mathbb{R}^{n_1\times\ldots\times n_l\times_{k-l+1}\, n}$ instead. In other words the subscript after the last "$\times$" indicates how often "$\times n$" is applied at the end. We may also write "$\times_{i}\, n$" with $i=0$ which means that there is no "$\times n$" at the end at all.

Now let $A\in\mathbb{R}^{n_1\times\ldots\times n_l \times_{i} n}$ for some $i\geq 1$. Then we can canonically identify $A$ with a sequence $A_1,\ldots,A_n$ of  generalized matrices from $\mathbb{R}^{n_1\times\ldots\times n_l \times_{i-1} n}$, i.e.\ real-valued mappings on $[n_1]\times\ldots\times [n_l]\times [n]^{i-1}$, by writing $[n]^i=[n]^{i-1}\times[n]$ and running through the trailing $[n]=\{1,\ldots,n\}$. This identification is useful for various reasons: For instance, we can rewrite a product $C=A\cdot B\in \mathbb{R}^{n_1\times\ldots\times n_l \times_{i} n}$, for arbitrary $B=(B_{jk})\in \mathbb{R}^{n\times n}$ by setting $C_k=\sum_{j=1}^n A_j B_{jk}$, such that $C_1,\ldots,C_n$ form $C$ in the same sense that $A_1,\ldots,A_n$ form $A$. Similarly, we can express products $A\cdot B\in \mathbb{R}^{n_1\times\ldots\times n_l \times_{i-1} n}$, where $B\in\mathbb{R}^n$. Also, for the case $l=2$ and $i\geq 1$, this decomposition allows to define the Frobenius norm $|A|_2$ recursively be setting $|A|_2:=\sqrt{\sum_{j=1}^n |A_j|^2_2}$ using the fact that for normal matrices the Frobenius norm is already defined.

Secondly, in some proofs we use the following notation: Assume we have a filtered probability space generated by a Brownian motion as in section \ref{SLC}. We denote by $\tdiv P$ the density in the finite variation part of an It\^o process $P$ and by $\bmdiv P$ the density in the martingale part of an It\^o process. We denote by $\bmdiv^i P$ the $i$-th component of $\bmdiv P$, i.e. the component which is multiplied by $\dx W^{i}_t$, $i=1,\ldots,d$, in the stochastic integral. This notation helps to shorten some calculations as the product rule for two It\^o processes $P$ and $Q$ now assumes the form
$$ \bmdiv^i (P\cdot Q) = (\bmdiv^i P)\cdot Q+ P\cdot \bmdiv^i Q, $$
$$ \tdiv (P\cdot Q) = (\tdiv P)\cdot Q+ P\cdot \tdiv Q+\sum_{i=1}^d (\bmdiv^i P)\cdot (\bmdiv^i Q). $$
Here $P,Q$ may be generalized matrices such that their product is well-defined. Note that $\tdiv P,\bmdiv^i P$ assume values in the same space as $P$.

Thirdly, we sometimes use the term "inner cutoff in a variable" to describe the following manipulation of a function $f$, which depends on, let's say, two variables $x,y$, where $y$ is from an Euclidean space $E$: Let $\chi$ be a Lipschitz continuous and bounded function on and to this Euclidean space for which there is a compact zero-centred ball $B\subset E$ such that $\chi(y)\in B$ is the projection of a given $y\in E$ to the convex set $B$. We can now define the manipulated function $\tilde{f}$ via $\tilde{f}(x,y):=f(x,\chi(y))$. In a sense the dependence on $y$ is "cut off" at some level which depends on the cutoff function $\chi$. We call the cutoff \emph{passive} for a $y\in E$ if $\chi(y)=y$.

\section{Dynamics of the first derivative}\label{firstD}

Assume that for given $\xi,\mu,\sigma,f$ satisfying (SLC) or (MLLC) we have a weakly regular decoupling field $u$ on an interval $[t_0,T]$. As $u$ is Lipschitz continuous in the spatial component there exists the spatial derivative $u_x$ defined as the classical derivative or as $0$ depending on whether the classical derivative exists or not (Lipschitz continuous functions are differentiable almost everywhere). Our objective is to study the dynamics of the $\mathbb{R}^{m\times n}$ - valued process $V_s:=u_x(s,X_s)$, where $X$ is the forward process for some initial condition $X_{t_1}=x\in\mathbb{R}^n$, where $t_1\in[t_0,T)$ and $s\in[t_1,T]$. Note that $u_x(T,\cdot)=\xi':\Omega\times\mathbb{R}^n\rightarrow\mathbb{R}^{m\times n}$ is known. 
Note also that $\|u_x\|_\infty = L_{u,x}$, where we take the essential supremum of the operator norm of $u_x$ (by which we mean the operator norm w.r.t.\ the Euclidean norms on $\mathbb{R}^{n}$ and $\mathbb{R}^{m}$). This implies $\|V\|_\infty\leq L_{u,x}$, again w.r.t.\ the operator norm.

For the next result we can either assume that $u$ is a weakly regular decoupling field to an (SLC) problem or that it is a weakly regular Markovian decoupling field to an (MLLC) problem. As usual we denote by $(X,Y,Z)$ the processes appearing in the definition of a decoupling field or a Markovian decoupling field respectively. We also denote by $\sigma^{(i)}$, where $i\in\{1,\ldots,d\}$, the $\mathbb{R}^n$ - valued $i$ - th column of an $\mathbb{R}^{n\times d}$ - valued $\sigma$. The expression $\mathrm{Id}_{m\times d}\in \mathbb{R}^{(m\times d) \times (m \times d)}$ denotes the generalized matrix associated with the identity on $\mathbb{R}^{m\times d}$.

\begin{thm}\label{derivdym} Assume that $\mu,\sigma,f$ are classically differentiable w.r.t.\ $(x,y,z)$ everywhere and assume that either $L_{\sigma,z}=0$ or $n=m=1$ (or both).
Then for almost all initial conditions $x\in\mathbb{R}^n$ there exists a time-continuous version of the process $V$ (which we again denote by $V$) and $d$ square-integrable $\mathbb{R}^{m \times n}$ - valued processes $\tilde{Z}^{(1)},\ldots,\tilde{Z}^{(d)}$, which we can combine to an $\mathbb{R}^{(m\times d) \times n}$ - valued process $\tilde{Z}$, such that
\begin{equation}\label{Vdynam}
V_s=\xi'(X_T)-\sum_{i=1}^d\int_s^T\tilde{Z}^{(i)}_r\mathrm{d} W^{(i)}_r-\int_s^T\varphi(r,V_r,\tilde{Z}_r)\dx r, 
\end{equation}
a.s. for every $s\in[t_1,T]$, where 
$$ \varphi(s,V_s,\tilde{Z}_s):=f_{x,s}+f_{y,s}V_s+f_{z,s} h(s,V_s,\tilde{Z}_s)-$$
$$-V_s\mu_{x,s}-V_s\mu_{y,s}V_s-V_s\mu_{z,s}h(s,V_s,\tilde{Z}_s)-\sum_{i=1}^d\tilde{Z}^{(i)}_s\left(\sigma^{(i)}_{x,s}+\sigma^{(i)}_{y,s}V_s +\sigma^{(i)}_{z,s} h(s,V_s,\tilde{Z}_s)\right), $$
with $f_{x,s}:=f_x(s,X_s,Y_s,Z_s)$, $f_{y,s}:=f_y(s,X_s,Y_s,Z_s)$ etc. and where
$$ h(s,V_s,\tilde{Z}_s):= (\mathrm{Id}_{m\times d}-V_s\sigma_{z,s})^{-1}(V_s\sigma_{x,s}+V_s\sigma_{y,s}V_s+\tilde{Z}_s). $$
\end{thm}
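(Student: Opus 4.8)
The plan is to obtain the dynamics of $V$ not by applying It\^o's formula to the merely Lipschitz field $u$ directly, but by differentiating the whole system \eqref{eq:decoupling} with respect to the initial value and then exploiting the decoupling condition. Strong regularity (Definition \ref{regulSLC}, resp.\ Theorem \ref{UNIqMREGulM}) guarantees that for a constant initial value $X_{t_1}=x$ the processes $X,Y,Z$ are weakly differentiable in $x$ with the square-integrability \eqref{strongregul2}. Denoting these derivatives by $X',Y',Z'$, they solve the associated variational equations: a linear forward SDE for $X'$ with $X'_{t_1}=\mathrm{Id}$ and martingale density $\bmdiv^i X'=\sigma^{(i)}_{x,s}X'+\sigma^{(i)}_{y,s}Y'+\sigma^{(i)}_{z,s}Z'$, and a linear backward SDE for $Y'$ with terminal value $\xi'(X_T)X'_T$, drift $f_{x,s}X'+f_{y,s}Y'+f_{z,s}Z'$ and martingale density $\bmdiv Y'=Z'$.

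Next, differentiating the decoupling condition $Y_s=u(s,X_s)$ gives the pivotal identity $Y'_s=V_sX'_s$. This is where the phrase ``for almost all initial conditions $x$'' enters: since $u$ is only Lipschitz, $u_x$ is defined merely Lebesgue-a.e., so one has to argue that along $X_s$ (which possesses a density for a.a.\ starting points) the exceptional null set is a.s.\ avoided, whence the chain rule applies and $V_s=u_x(s,X_s)$ is the genuine derivative. Once $X'_s$ is invertible we may write $V_s=Y'_s(X'_s)^{-1}$, which exhibits $V$ as an It\^o process and lets us define $\tilde Z:=\bmdiv V$. Invertibility of $X'_s$ I would establish by showing that, after the substitutions below, $X'$ solves a linear SDE with $X'$ appearing as a right factor, so that its fundamental matrix is invertible as for deterministic linear flows; since $X'_{t_1}=\mathrm{Id}$, a continuity and bootstrap argument from small intervals makes the inverse available on all of $[t_1,T]$.

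The dynamics of $V$ then follow from two coefficient comparisons in $Y'=VX'$. Collecting the $d$ martingale identities $\bmdiv^i Y'=(\bmdiv^i V)X'+V\,\bmdiv^i X'$ and inserting $Y'=VX'$ yields the implicit relation $Z'=\tilde Z X'+V\sigma_{x,s}X'+V\sigma_{y,s}VX'+V\sigma_{z,s}Z'$; solving for $Z'$ gives $Z'=h(s,V,\tilde Z)\,X'$ with $h$ exactly as stated. Comparing the finite-variation parts via the product rule $\tdiv Y'=(\tdiv V)X'+V(\tdiv X')+\sum_i(\bmdiv^i V)(\bmdiv^i X')$, then substituting $Y'=VX'$, $Z'=hX'$ and $\bmdiv^i V=\tilde Z^{(i)}$, writes every term as a left coefficient times $X'$; cancelling the invertible factor $X'$ on the right produces precisely $\tdiv V=\varphi(s,V,\tilde Z)$. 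Together with $\bmdiv V=\tilde Z$ and the terminal value $V_T=u_x(T,X_T)=\xi'(X_T)$ this is the integrated form \eqref{Vdynam}, and square-integrability of $\tilde Z$ follows from the bounds \eqref{strongregul2} on $X',Y',Z'$, the uniform bound $\|V\|_\infty\le L_{u,x}$ and the invertibility of $X'$.

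The main obstacle is twofold. First, the solvability of the implicit relation for $Z'$ requires the linear map $\mathrm{Id}_{m\times d}-V_s\sigma_{z,s}$ on $\mathbb{R}^{m\times d}$ to be invertible, and this is exactly what the hypothesis ``$L_{\sigma,z}=0$ or $n=m=1$'' secures: for $L_{\sigma,z}=0$ one has $\sigma_{z,s}=0$ and the inverse is the identity, while for $n=m=1$ the bound $|V_s\sigma_{z,s}|\le L_{u,x}L_{\sigma,z}<1$ coming from weak regularity keeps the scalar factor away from zero; the general-dimensional operator inversion is more delicate and is the reason this first theorem is stated under the restriction. Second, and more fundamentally, is the a.e.\ chain-rule step $Y'=V X'$ for the non-smooth $u$ together with the invertibility of $X'$, which is the genuinely technical heart of the argument and which forces the ``almost all $x$'' qualifier.
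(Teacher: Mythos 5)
Your overall strategy -- differentiate the forward and backward equations in the initial value, apply the chain rule for weak derivatives to the decoupling condition to get $\partial_x Y = V\,\partial_x X$, and then match martingale and finite-variation coefficients via the product rule to read off $h$ and $\varphi$ -- is exactly the paper's route, and those parts of your argument are sound. However, there is a genuine gap in how you close the argument, and it shows up in your diagnosis of the hypothesis ``$L_{\sigma,z}=0$ or $n=m=1$''. You claim this hypothesis is what secures invertibility of $\mathrm{Id}_{m\times d}-V_s\sigma_{z,s}$, with the general-dimensional inversion being ``more delicate''. That is not the case: the remark following the theorem shows, in arbitrary dimensions, that the operator norm of $V_s\sigma_{z,s}$ is at most $L_{u,x}L_{\sigma,z}<1$ (using only weak regularity), so the Neumann series gives the inverse with norm at most $(1-L_{u,x}L_{\sigma,z})^{-1}$ without any extra assumption; indeed Section 5 explicitly notes that $\varphi$ is well-defined without the restriction. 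The hypothesis is needed elsewhere, namely to prove that $\tilde Z$ is square-integrable on all of $[t_1,T]$.

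This is precisely where your argument breaks down. You assert that square-integrability of $\tilde Z$ ``follows from the bounds \eqref{strongregul2} on $X',Y',Z'$, the uniform bound $\|V\|_\infty\le L_{u,x}$ and the invertibility of $X'$''. But invertibility of $\partial_x X$ does not give a uniform bound on $(\partial_x X)^{-1}$; the identity $\tilde Z^{(i)} = (E_i-V\sigma^{(i)}_{z,\cdot})\,\partial_x Z\,(\partial_x X)^{-1} - V\sigma^{(i)}_{x,\cdot}-V\sigma^{(i)}_{y,\cdot}V$ only yields $L^2$-bounds on the localized intervals $[t_1,\tau_k]$ where $|(\partial_x X)^{-1}|\le k$, and these bounds a priori blow up with $k$. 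The paper removes the $k$-dependence by observing that $(V,\tilde Z)$ restricted to $[t_1,\tau_k]$ solves a BSDE with terminal value $V_{\tau_k}$ bounded by $L_{u,x}$: when $L_{\sigma,z}=0$ the generator $\varphi$ is Lipschitz in $\tilde Z$ and standard Lipschitz BSDE estimates apply; when $n=m=1$ the BSDE is scalar and quadratic, and BMO estimates (Theorem A.1.11 of \cite{Fromm2015}) give a bound uniform in $k$. In general dimensions with $L_{\sigma,z}>0$ the generator is a genuinely multidimensional quadratic one, for which no such theory is available -- that is the real reason for the restriction. Moreover, the logical order is the reverse of yours: only after the $k$-uniform square-integrability of $\tilde Z$ is established can one represent $\partial_x X$ as a stochastic exponential with square-integrable coefficients $\alpha,\beta$ and conclude that its inverse is a.s.\ bounded on $[t_1,T]$, hence $\tau_k = T$ for large $k$ and the dynamics hold up to $T$. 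Your ``bootstrap from small intervals'' for invertibility presupposes coefficients involving $h(s,V_s,\tilde Z_s)$ that are not yet known to be integrable, so as written the argument is circular.
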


\begin{remark} Firstly, let us explain the meaning of the expression $(\mathrm{Id}_{m\times d}-V_s\sigma_{z,s})^{-1}$ in the definition of $h$. To this end note that $\sigma_{z}(s,X_s,Y_s,Z_s)$ is $\mathbb{R}^{n\times d \times (m \times d)}$ - valued, since $\sigma$ is $\mathbb{R}^{n\times d}$ - and $z$ is $\mathbb{R}^{m \times d}$ - valued. Therefore, the product $V_s\sigma_{z}(s,X_s,Y_s,Z_s)$ is well-defined and assumes values in $\mathbb{R}^{m\times d \times (m\times d)}$. In particular, it can be identified with a quadratic $(m\cdot d) \times (m\cdot d)$ - matrix. We would like to estimate its operator norm (w.r.t.\ the Frobenius norm on $\mathbb{R}^{m\times d}$): To this end consider an arbitrary $\zeta \in\mathbb{R}^{m\times d}$ having the Frobenius norm of $1$. Then $\sigma_{z,s}\zeta$ is $\mathbb{R}^{n\times d}$ - valued and has a Frobenius norm of at most $L_{\sigma,z}$ due to the definition of this constant. Let $v_i\in \mathbb{R}^{n}$, $i=1,\ldots,d$, be the $i$ - th column of this $n\times d$ - matrix. We have $\sum_{i=1}^d|v_i|^2\leq L_{\sigma,z}^2$, where $|\cdot|$ denotes the Euclidean norm. Now $V_s v_i$ is $\mathbb{R}^{m}$ - valued and has a Euclidean norm of at most $L_{u,x}|v_i|$. Therefore, the Frobenius norm of $V_s\sigma_{z,s}\zeta$ is at most $\sqrt{\sum_{i=1}^d L_{u,x} ^2 |v_i|^2}\leq L_{u,x}L_{\sigma,z}<1$. In other words the operator norm of $V_s\sigma_{z,s}$ is at most $L_{u,x}L_{\sigma,z}<1$. We have, thus, shown that
$$ (\mathrm{Id}_{m\times d}-V_s\sigma_{z,s})^{-1}=\sum_{k=0}^\infty \left(V_s\sigma_{z,s}\right)^k=\mathrm{Id}_{m\times d}+\sum_{k=1}^\infty \left(V_s\sigma_{z,s}\right)^k $$
is well-defined and bounded by $\sum_{k=0}^\infty \left(L_{u,x}L_{\sigma,z}\right)^k=(1-L_{u,x}L_{\sigma,z})^{-1}<\infty$ in its operator norm. 
\end{remark}

\begin{proof}[Proof of Theorem \ref{derivdym}]
In order to deduce the dynamics of $V$ we can begin by formally differentiating the forward and the backward equation w.r.t.\ $x\in\mathbb{R}^n$ using strong regularity (see Definitions \ref{regulSLC} and \ref{regulMLLC}). One can verify that one can interchange differentiation and integration and that a chain rule for weak derivatives applies (see Sections A.2 and A.3 in \cite{Fromm2015}). Thus, we obtain that for every version $(\partial_x X, \partial_x Y,\partial_x Z) = (\partial_x X^{t_1,x}, \partial_x Y^{t_1,x},\partial_x Z^{t_1,x})$ of the weak derivative, such that for every $s\in[t_1,T]$ the pair $(\partial_x X_s, \partial_x Y_s)$ is a weak derivative of $(X_s,Y_s)$, we have for every $t\in[t_1,T]$:
\begin{align}\label{dyn xx}
\partial_x X_t = & \mathrm{Id}_n + \int_{t_1}^t \left(\mu_{x,s} \partial_x X_s + \mu_{y,s}\partial_x Y_s +\mu_{z,s}\partial_x  Z_s\right) ds  \\ \nonumber
&+ \sum_{i=1}^d\int_{t_1}^t \left(\sigma^{(i)}_{x,s} \partial_x X_s + \sigma^{(i)}_{y,s}\partial_x Y_s +\sigma^{(i)}_{z,s}\partial_x  Z_s\right)\mathrm{d}W^{(i)}_s
\end{align} 
and
\begin{align}\label{dyn yx}
\partial_x Y_t = \xi'(X_T) \partial_x X_T - \int_{t}^T \left(f_{x,s} \partial_x X_s + f_{y,s}\partial_x Y_s +f_{z,s}\partial_x  Z_s\right) \dx s - \sum_{i=1}^d\int_t^T \partial_x  Z^{(i)}_s\mathrm{d}W^{(i)}_s,
\end{align}
for $\mathbb{P}\otimes\lambda$ - almost all $(\omega,x)\in\Omega\times\IR^n$. Note that $\partial_x  Z^{(i)}$, $i=1,\ldots,d$, are $\mathbb{R}^{m\times n}$ - valued. We denote by $\partial_x  Z$ the corresponding $\mathbb{R}^{(m\times d)\times n}$ - valued process.

By redefining $(\partial_x X, \partial_x Y)$ as the right-hand-sides of \eqref{dyn xx} and \eqref{dyn yx} respectively, we obtain a new pair of processes $(\partial_x X, \partial_x Y)$ that are continuous in time for all $(\omega,x)$ but remain weak derivatives of $X,Y$ w.r.t.\ $x$. From now on, we always assume that $\partial_x X$ and $\partial_x Y$ are continuous in time. We also assume that for fixed $t\in[t_1,T]$ the mappings $\partial_x X_t$ and $\partial_x Y_t$ are weak
derivatives of $X_t$ and $Y_t$ w.r.t.\ $x\in\mathbb{R}^n$. In particular $\partial_x X_{t_1}=\mathrm{Id}_n$ a.s. for almost all $x\in\IR^n$.

Recall that $Y_t=u(t,X_t)$ a.s. for all $(t,x)\in[t_1,T]\times\mathbb{R}^n$. Therefore, for fixed $t\in[t_1,T]$, the weak derivatives of the two sides of the equation w.r.t.\ $x\in\mathbb{R}^n$
must coincide up to an $\Omega\otimes\lambda$ - null set. The chain rule for weak derivatives (see Corollary 3.2 in \cite{Ambrosio1990} or Lemma A.3.1.\ in \cite{Fromm2015}) implies, for any fixed $t\in[t_1,T]$, that we have for 
$\mathbb{P}\otimes\lambda$ - almost all $(\omega,x)$  
\begin{align}\label{chain} \partial_x Y_t\mathbf{1}_{\{\det(\partial_x X_t)\neq 0\}}=u_x(t,X_t)\partial_x X_t\mathbf{1}_{\{\det(\partial_x X_t)\neq 0\}}=V_t\partial_x X_t\mathbf{1}_{\{\det(\partial_x X_t)\neq 0\}}. 
\end{align}

Now, choose a fixed $x\in\R^n$ such that $\partial_x X_{t_1}=\mathrm{Id}_n$ a.s., \eqref{chain}, \eqref{dyn xx}, \eqref{dyn yx} are satisfied for almost all $(t,\omega)\in[t_1,T]\times\Omega$ and, in addition, \eqref{chain} is satisfied for $t=t_1$, $\mathbb{P}$ - almost surely.
Note that, since $\partial_x X$, $\partial_x Y$ are continuous in time, \eqref{dyn xx} and \eqref{dyn yx} in fact hold for all $t\in[t_1,T]$, $\mathbb{P}$ - almost surely.

For arbitrary $k\in\mathbb{N}$ define a stopping time $\tau_k$ via
$$\tau_k:=\inf\left\{t\in[t_1,T]\,\Big|\,\left|(\partial_x X_t)^{-1}\right|\leq k\right\}\wedge T,$$
where $\left|(\partial_x X_t)^{-1}\right|$ denotes the operator norm of $(\partial_x X_t)^{-1}$ if the inverse exists and $\infty$ otherwise.
Note that $\tau_k>t_1$ almost surely for all $k\geq 2$. Also, $\partial_x X$ is an almost surely invertible matrix on $[t_1,\tau_k]$ and we have $V=\partial_x Y\left(\partial_x X\right)^{-1}$ a.e. on this stochastic interval. In particular, $V$, or a version of $V$, is a continuous It\^o process and we can write
$$ V_{s\wedge\tau_k}=V_{\tau_k}-\sum_{i=1}^d\int_{s\wedge\tau_k}^{\tau_k}\tilde{Z}^{(i)}_r\mathrm{d}W^{(i)}_r-\int_{s\wedge\tau_k}^{\tau_k}\varphi_r \dx r, $$
$s\in[t_1,T]$, with processes $\tilde{Z}^{(i)}$ and $\varphi$ that are to be determined. To this end we calculate the dynamics of $V \partial_x X$ using the product rule and compare the result to the dynamics of $\partial_x Y$, which is the same process on the stochastic interval we consider, to obtain an equation that must be satisfied by $\varphi$: Using the product rule we have
$$ \bmdiv^i (V \partial_x X) = \tilde{Z}^{(i)} \partial_x X + V \left(\sigma^{(i)} _{x,\cdot} \partial_x X + \sigma^{(i)} _{y,\cdot}\partial_x Y +\sigma^{(i)} _{z,\cdot}\partial_x  Z\right)=\partial_x  Z^{(i)} $$
and 
$$ \tdiv (V \partial_x X) = \varphi \partial_x X + V \left(\mu_{x,\cdot} \partial_x X + \mu_{y,\cdot}\partial_x Y +\mu_{z,\cdot}\partial_x  Z\right) $$
$$ + \sum_{i=1}^d \tilde{Z}^{(i)} \left(\sigma^{(i)} _{x,\cdot} \partial_x X + \sigma^{(i)} _{y,\cdot}\partial_x Y +\sigma^{(i)} _{z,\cdot}\partial_x  Z\right) = 
f_{x,\cdot} \partial_x X + f_{y,\cdot}\partial_x Y +f_{z,\cdot}\partial_x  Z. $$
We define processes $h^{(i)}$ via $h^{(i)}=\partial_x  Z^{(i)}\left(\partial_x X\right)^{-1}$, where $i=1,\ldots,d$. Note that $h^{(i)}$ are $\mathbb{R}^{m\times n}$ - valued.  We denote by $h$ the corresponding $\mathbb{R}^{(m\times d)\times n}$ - valued process. The above equation for $\bmdiv^i (V \partial_x X)$ yields after multiplication with $\left(\partial_x X\right)^{-1}$:
$$ \tilde{Z}^{(i)} + V \sigma^{(i)} _{x,\cdot} + V\sigma^{(i)} _{y,\cdot}V +V\sigma^{(i)} _{z,\cdot}h=h^{(i)}. $$
By defining $E_i\in \mathbb{R}^{m\times (m\times d)}$ as the generalized matrix such that $E_i z \in \mathbb{R}^{m}$ is the $i$ - th column of an arbitrary $z \in \mathbb{R}^{m\times d}$,
we have 
$$ \tilde{Z}^{(i)} + V \sigma^{(i)} _{x,\cdot} + V\sigma^{(i)} _{y,\cdot}V =\left(E_i-V\sigma^{(i)} _{z,\cdot}\right)h. $$
By combining $\sigma^{(i)} _{x,\cdot}$, $\sigma^{(i)} _{y,\cdot}$ and $\sigma^{(i)} _{z,\cdot}$, which are $\IR^{n\times n}$, $\IR^{n\times m}$ and $\IR^{n\times (m\times d)}$ -  valued respectively, over $i=1,\ldots,d$, we obtain the processes $\sigma_{x,\cdot}$, $\sigma_{y,\cdot}$ and $\sigma_{z,\cdot}$, which are $\IR^{n\times d\times n}$, $\IR^{n\times d\times m}$ and $\IR^{n\times d\times (m\times d)}$ -  valued respectively, and we can write
$$ \left(\mathrm{Id}_{m\times d}-V\sigma_{z,\cdot}\right)^{-1}\left(\tilde{Z} + V \sigma _{x,\cdot} + V\sigma_{y,\cdot}V\right) =h=h(\cdot,V,\tilde{Z}), $$
where $\tilde{Z}$ is $\IR^{(m\times d)\times n}$ - valued and where $h$ now denotes both a process and a function by a slight abuse of notation. \\
Furthermore, the equation for $\tdiv (V \partial_x X)$ yields
$$  \varphi + V \left(\mu_{x,\cdot}  + \mu_{y,\cdot} V +\mu_{z,\cdot} h\right) +\sum_{i=1}^d \tilde{Z}^{(i)} \left(\sigma^{(i)} _{x,\cdot} + \sigma^{(i)} _{y,\cdot} V+\sigma^{(i)} _{z,\cdot} h\right) = 
f_{x,\cdot}  + f_{y,\cdot} V +f_{z,\cdot} h $$
or
$$  \varphi = 
f_{x,\cdot}  + f_{y,\cdot} V +f_{z,\cdot} h 
- V \mu_{x,\cdot}  - V\mu_{y,\cdot} V -V\mu_{z,\cdot} h -\sum_{i=1}^d \tilde{Z}^{(i)} \left(\sigma^{(i)} _{x,\cdot} + \sigma^{(i)} _{y,\cdot} V+\sigma^{(i)} _{z,\cdot} h\right). $$

Thus, we have proven that $V$ has the dynamics as in \eqref{Vdynam} but only on the stochastic interval $[t_1,\tau_k]$ for arbitrary $k\in\mathbb{N}$. It follows from definition that $\tau_k$ is non-decreasing in $k$. So, we can define $\tau:=\lim_{k\rightarrow\infty}\tau_k$. Also, we can define $\tilde{Z}$ on the whole of $[t_1,T]$ by setting it to zero outside of $[t_1,\tau)$. It remains to show that $\tilde{Z}$ is square-integrable and that $\tau=T$ holds a.s.:

Observe that on $[t_1,\tau_k]$
$$ \tilde{Z}^{(i)} =\left(E_i-V\sigma^{(i)} _{z,\cdot}\right)\partial_x  Z\left(\partial_x X\right)^{-1} - V \sigma^{(i)} _{x,\cdot} - V\sigma^{(i)} _{y,\cdot}V $$
and that $\partial_x  Z$ is square-integrable due to strong regularity, while $\left(\partial_x X\right)^{-1}$ is uniformly bounded on $[t_1,\tau_k]$. This implies that $\tilde{Z}$ is also square-integrable on $[t_1,\tau_k]$ (with the $L^2$ - norm possibly depending on $k$ at this point). We now distinguish between the cases $L_{\sigma,z}=0$ and $n=m=1$ to show that $\tilde{Z}$ is square-integrable on $[t_1,T]$: \\
In the first case $\sigma^{(i)}_{z,s}$ vanishes and $\varphi$ has in fact a Lipschitz continuous dependence on $\tilde{Z}$. In other words, $\tilde{Z}$, restricted to the interval $[t_1,\tau_k]$, is the control process of a Lipschitz BSDE with terminal condition $V_{\tau_k}$. Since the Lipschitz constant of the FBSDE can be chosen uniformly in $k$, we obtain that $\tilde{Z}$ is square-integrable. \\
Now assume $n=m=1$. Then $V$ is one-dimensional and $\tilde{Z}$ restricted to the interval $[t_1,\tau_k]$ satisfies a quadratic BSDE with terminal condition $V_{\tau_k}$. Using Theorem A.1.11. in \cite{Fromm2015} we have that $\tilde{Z}$, restricted to the interval $[t_1,\tau_k]$, is a BMO - process with a BMO - norm which can be bounded independently of $k$. This yields that $\tilde{Z}$ is a BMO - process. In particular, it is square-integrable.

In both of the two cases above, it is the square-integrability of $\tilde{Z}$ that implies $\tau=T$ a.s.: Using \eqref{dyn xx} we have
$$
\partial_x X_{t\wedge\tau_k} =  \mathrm{Id}_n + \int_{t_1}^{t\wedge\tau_k}\alpha_s\partial_x X_s \dx s  +  \sum_{i=1}^d\int_{t_1}^{t\wedge\tau_k} \beta^{(i)}_s\partial_x X_s\,\mathrm{d}W^{(i)}_s,
$$
a.s. for $t\in[t_1,T]$, where $\alpha_s:=\mu_{x,s} + \mu_{y,s}V_s +\mu_{z,s}h(s,V_s,\tilde{Z}_s)$ and $\beta^{(i)}_s:=\sigma^{(i)}_{x,s} + \sigma^{(i)}_{y,s}V_s +\sigma^{(i)}_{z,s}h(s,V_s,\tilde{Z}_s)$. In other words, $\partial_x X$ satisfies a linear SDE on $[t_1,\tau_k]$, such that we obtain 
$$ \partial_x X_{t\wedge\tau_k}=\exp\left(\int_{t_1}^{t\wedge\tau_k}\alpha_s \dx s+ \sum_{i=1}^d\int_{t_1}^{t\wedge\tau_k} \beta^{(i)}_s\,\mathrm{d}W^{(i)}_s-\sum_{i=1}^d \int_{t_1}^{t\wedge\tau_k} \left(\beta^{(i)}_s\right)^2  \dx s\right).
$$
Note that $\alpha$ and $\beta$ are defined on $[t_1,T]$ (since $\tilde{Z}$ is) and both are square-integrable as $h$ is linear in $\tilde{Z}$. Passing to the limit $k\rightarrow\infty$ and using continuity of $\partial_x X$ in time we have
$$ \partial_x X_{t\wedge\tau}=\exp\left(\int_{t_1}^{t\wedge\tau}\alpha_s \dx s+ \sum_{i=1}^d\int_{t_1}^{t\wedge\tau} \beta^{(i)}_s\dx W^{(i)}_s-\sum_{i=1}^d \int_{t_1}^{t\wedge\tau} \left(\beta^{(i)}_s\right)^2 \dx s\right).
$$
Due to the above, $\partial_x X_{t\wedge\tau}$ is an invertible matrix and, moreover, for almost every fixed $\omega$ the operator norm $\left|(\left(\partial_x X_{t\wedge\tau}\right)(\omega))^{-1}\right|$ can be bounded independently of $t\in[t_1,T]$. In other words, if we fix $\omega\in\Omega$ then for sufficiently large $k\in\mathbb{N}$ the operator norm $\left|(\left(\partial_x X_{t}\right)(\omega))^{-1}\right|$ remains below $k$ for all $t\leq \tau(\omega)$. This implies that, for almost every $\omega$, we have $\tau_k(\omega)=T$ for sufficiently large $k$. Therefore, $\tau(\omega)=T$ for almost all $\omega$.
\end{proof}

\section{Dynamics of higher derivatives}\label{higherD}

Again, assume that $\xi,\mu,\sigma,f$ satisfy (SLC) or (MLLC). For $\mu,\sigma,f$ that are classically differentiable w.r.t.\ $(x,y,z)$ everywhere and $\xi$ that is classically differentiable w.r.t.\ $x\in\mathbb{R}^n$ everywhere we can consider the FBSDE
\begin{align}\label{jointdyn}  X_s & =x+\int_{0}^s\mu(r,X_r,Y_r,Z_r)\dx r+\int_{0}^s\sigma(r,X_r,Y_r,Z_r)\dx W_r , \nonumber\\ 
 Y_s & =\xi(X_T)-\int_{s}^{T}f(r,X_r,Y_r,Z_r)\dx r-\int_{s}^{T}Z_r\dx W_r,  \nonumber\\
 V_s & =\xi'(X_T)-\int_s^T\varphi(r,X_r,Y_r,Z_r,V_r,\tilde{Z}_r)\dx r-\sum_{i=1}^d\int_s^T\tilde{Z}^{(i)}_r\mathrm{d} W^{(i)}_r, \quad s\in[0,T]. 
\end{align}
Here, $\varphi$ is defined as in Theorem \ref{derivdym}. Note that, unlike in Theorem \ref{derivdym}, we do not have to assume $L_{\sigma,z}=0$ or $n=m=1$ for $\varphi$ to be well-defined as long as $\|V\|_\infty< L_{\sigma,z}^{-1}$. \\ Observe that the forward equation in the above system is $n$ - dimensional, while the backward equation is a system of $m+m\cdot n$ equations. The forward process is still $X$, but the backward process is the pair $(Y,V)$.

Now assume that we have a decoupling field to the above problem. We denote by $u$ the first $m$ components of it. Due to Theorem \ref{derivdym} it is natural to assume that the remaining $m\cdot n$ components of the decoupling field are the spatial derivative $u_x$ of $u$. If we assume that $u_x$ is Lipschitz continuous, we can define an $\mathbb{R}^{m\times n\times n}$ - valued process $\check{Y}^{(2)}$ via  $\check{Y}^{(2)}_s=u_{xx}(s,X_s)$. It is natural to ask whether this process satisfies some BSDE similar to the one satisfied by $V$ in Theorem \ref{derivdym} and what the generator of this BSDE would be. Let us heuristically deduce a reasonable candidate for the generator. To this end we can use the same approach as in the proof of  Theorem \ref{derivdym} and view $\check{Y}^{(2)}$  as the product $\partial_x V (\partial_x X)^{-1}$, such that  $\check{Y}^{(2)}\partial_x X$ and $\partial_x V$ coincide. Now assume that $\check{Y}^{(2)}$ is an It\^o process with
$$ \check{Y}^{(2)}_s =\xi''(X_T)-\int_s^T\varphi^{(2)}_r\dx r-\sum_{i=1}^d\int_s^T\check{Z}^{(2,i)}_r\mathrm{d} W^{(i)}_r, $$
where $\varphi^{(2)}_r$ is to be determined. Using formal differentiation of the backward equation \eqref{jointdyn} we obtain the dynamics of $\partial_x V$:
$$\partial_x V_t = \xi''(X_T) \partial_x X_T - \int_{t}^T \left(\varphi_{x,s} \partial_x X_s + \varphi_{y,s}\partial_x Y_s +\varphi_{z,s}\partial_x  Z_s
+\varphi_{v,s}\partial_x  V_s+\varphi_{\tilde{z},s}\partial_x  \tilde{Z}_s\right) \dx s $$
$$ -\sum_{i=1}^d\int_t^T \partial_x \tilde{Z}^{(i)}_s\mathrm{d}W^{(i)}_s.$$
By using the product rule and matching the coefficients we obtain
$$ \bmdiv^i (\check{Y}^{(2)} \partial_x X) = \check{Z}^{(2,i)} \partial_x X + \check{Y}^{(2)} \left(\sigma^{(i)} _{x,\cdot} \partial_x X + \sigma^{(i)} _{y,\cdot}\partial_x Y +\sigma^{(i)} _{z,\cdot}\partial_x  Z\right)=\partial_x  \tilde{Z}^{(i)} $$
and 
$$ \tdiv (\check{Y}^{(2)} \partial_x X) = \varphi^{(2)} \partial_x X + \check{Y}^{(2)} \left(\mu_{x,\cdot} \partial_x X + \mu_{y,\cdot}\partial_x Y +\mu_{z,\cdot}\partial_x  Z\right) $$
$$ + \sum_{i=1}^d \check{Z}^{(2,i)} \left(\sigma^{(i)} _{x,\cdot} \partial_x X + \sigma^{(i)} _{y,\cdot}\partial_x Y +\sigma^{(i)} _{z,\cdot}\partial_x  Z\right)  $$
$$ =\varphi_{x,\cdot} \partial_x X+ \varphi_{y,\cdot}\partial_x Y +\varphi_{z,\cdot}\partial_x  Z
+\varphi_{v,\cdot}\partial_x  V+\varphi_{\tilde{z},\cdot}\partial_x  \tilde{Z}. $$
The dynamics for $\bmdiv^i (\check{Y}^{(2)} \partial_x X)$ yield that 
$$ \partial_x  \tilde{Z}^{(i)}(\partial_x X)^{-1}=\check{Z}^{(2,i)} + \check{Y}^{(2)} \left(\sigma^{(i)} _{x,\cdot}+ \sigma^{(i)} _{y,\cdot}V +\sigma^{(i)} _{z,\cdot}h\right)=:h^{(2,i)}. $$
Combined with the dynamics of $\tdiv (\check{Y}^{(2)} \partial_x X)$ we obtain
\begin{eqnarray*}\varphi^{(2)}&=&\varphi_{x,\cdot}+ \varphi_{y,\cdot}V +\varphi_{z,\cdot}h+\varphi_{v,\cdot}\check{Y}^{(2)}+\varphi_{\tilde{z},\cdot}h^{(2)} \\
&\,&-\check{Y}^{(2)} \left(\mu_{x,\cdot} + \mu_{y,\cdot}V +\mu_{z,\cdot}h\right) \\
&\,&-\sum_{i=1}^d \check{Z}^{(2,i)} \left(\sigma^{(i)} _{x,\cdot} + \sigma^{(i)} _{y,\cdot} V +\sigma^{(i)} _{z,\cdot} h\right).
\end{eqnarray*}

This heuristic calculation can be straightforwardly generalized to higher order derivatives. It is, therefore, natural to make the following definitions:


Assume that $\mu,\sigma,f$ are $k\in\mathbb{N}_1$ times classically differentiable w.r.t.\ $(x,y,z)$ everywhere and that $\xi$ is $k$ times classically differentiable w.r.t.\ $x$ everywhere. 

Define the set $\Theta_k$ recursively via:
$ \Theta_0 := [0,T]\times\Omega\times\mathbb{R}^{n}\times \mathbb{R}^{m}\times\mathbb{R}^{m\times d}, $
$$ \Theta_1 :=\Theta_0\times \left\{ v\in \mathbb{R}^{m \times n}\,\big|\, |v|< L_{\sigma,z}^{-1}\right\}\times\mathbb{R}^{\left(m\times d\right) \times n} $$
and, for $k$ larger than $1$, set $\Theta_{k} := \Theta_{k-1} \times \mathbb{R}^{m \times_k n}\times\mathbb{R}^{\left(m\times d\right) \times_k n}$.
We would like to define functions 
$h^{(k)}:\Theta_k\rightarrow\mathbb{R}^{\left(m\times d\right) \times_k n}$
and
$ \varphi^{(k)}:\Theta_k\rightarrow\mathbb{R}^{m \times_k n} $ generalizing $h$ and $\varphi$.
For an element
$\theta_k=\left(t,\omega,x,\check{y}^{(0)},\check{z}^{(0)},\ldots,\check{y}^{(k)},\check{z}^{(k)}\right)\in\Theta_k$
define $\theta_{i}:=\left(t,\omega,x,\check{y}^{(0)},\check{z}^{(0)},\ldots,\check{y}^{(i)},\check{z}^{(i)}\right)\in\Theta_i$ for every $i=0,\ldots,k$ and set $\theta:=\theta_0$.
Now, we can define $h^{(1)}$ via
\begin{equation}\label{h1defi} h^{(1)}(\theta_1):=\left(\mathrm{Id}_{m\times d}-\check{y}^{(1)}\sigma_{z}(\theta)\right)^{-1}\left(\check{y}^{(1)}\sigma_{x}(\theta)+\check{y}^{(1)}\sigma_{y}(\theta)\check{y}^{(1)}+\check{z}^{(1)}\right), \end{equation}
 and $h^{(k)}$, for $k\geq 2$, via
$$ h^{(k,j)}(\theta_k):=
\check{z}^{(k,j)} + \check{y}^{(k)} \left(\sigma^{(j)}_{x}(\theta)+ \sigma^{(j)}_{y}(\theta)\check{y}^{(1)} +\sigma^{(j)}_{z}(\theta)h^{(1)}(\theta_1)\right), \quad j=1,\ldots,d. $$
Next we set $\varphi^{(0)}:=f$ and for $k\geq 1$ define $\varphi^{(k)}$ via:
\begin{eqnarray}\label{phirecur}\varphi^{(k)}(\theta_k)&:=&\varphi^{(k-1)}_x(\theta_{k-1})+
\sum_{i=0}^{k-1}\left(\varphi^{(k-1)}_{\check{y}^{(i)}}(\theta_{k-1})\check{y}^{(i+1)}+\varphi^{(k-1)}_{\check{z}^{(i)}}(\theta_{k-1})h^{(i+1)}(\theta_{i+1})\right) \nonumber \\
 &\,&-\check{y}^{(k)} \left(\mu_{x}(\theta) + \mu_{y}(\theta)\check{y}^{(1)}+\mu_{z}(\theta)h^{(1)}(\theta_1)\right) \nonumber \\
 &\,&-\sum_{j=1}^d \check{z}^{(k,j)} \left(\sigma^{(j)} _{x}(\theta) + \sigma^{(j)} _{y}(\theta) \check{y}^{(1)} +\sigma^{(j)} _{z}(\theta) h^{(1)}(\theta_1)\right).
\end{eqnarray}

Note that this definition makes sense only if $\varphi^{(k-1)}$, $\mu$ and $\sigma$ are sufficiently smooth such that all derivatives exist as classical derivatives. We will later derive this from certain differentiability requirements for $f,\mu,\sigma$. 

Also note that $\varphi^{(k-1)}_{\check{y}^{(i)}}(\theta_{k-1})$ is defined as a linear mapping between the spaces $\mathbb{R}^{m \times_i n}$ and $\mathbb{R}^{m \times_{k-1} n}$. Thus, we can merely apply $\varphi^{(k-1)}_{\check{y}^{(i)}}(\theta_{k-1})$ to objects from $\mathbb{R}^{m \times_i n}$, 
while $\check{y}^{(i+1)}$ is from $\mathbb{R}^{m \times_{i+1} n}$. 
However, we can identify $\check{y}^{(i+1)}$ with a vector of $n$ objects from $\mathbb{R}^{m \times_{i} n}$ and apply $\varphi^{(k-1)}_{\check{y}^{(i)}}(\theta_{k-1})$ component-wise. Similarly, we define the product $\varphi^{(k-1)}_{\check{z}^{(i)}}(\theta_{k-1})h^{(i+1)}(\theta_{i+1})$.

Observe that $h^{(k)}$ and $\varphi^{(k)}$ are affine linear in $\check{y}^{(k)},\check{z}^{(k)}$ if $k\geq 2$. Note also that $\varphi^{(1)}=\varphi$ and $h^{(1)}=h$ are, in general, quadratic in $(\check{y}^{(1)},\check{z}^{(1)})$ if $L_{\sigma,z}=0$ and even non-polynomial in $\check{y}^{(1)}$ if $L_{\sigma,z}>0$.

Due to our heuristic considerations, which are straightforward to generalize to arbitrary $k$, it is natural to expect the process $\check{Y}^{(i)}_s = u_i(s,X_s)$, where $u_i$ is the $i$ - the derivative of $u$ w.r.t.\ $x$, assuming it exists, to satisfy
\begin{equation}\label{genYcheck}
\check{Y}^{(i)}_s  =\xi^{(i)}(X_T)-\int_s^T\varphi^{(i)}(r,X_r,\check{Y}^{(0)}_r,\check{Z}^{(0)}_r,\ldots,\check{Y}^{(i)}_r,\check{Z}^{(i)}_r)\dx r-
\sum_{j=1}^d\int_s^T\check{Z}^{(i,j)}_r\mathrm{d} W^{(j)}_r,
\end{equation}
where $\check{Z}^{(i,j)}$ is $\mathbb{R}^{m\times_i n}$ - valued and is extracted from some $\mathbb{R}^{\left(m\times d\right) \times_i n}$ - valued process $\check{Z}^{(i)}$ (again assuming it exists). Here $\xi^{(i)}$ refers to the $i$ - th derivative of $\xi$ w.r.t.\ $x\in\mathbb{R}^n$.

Now, for arbitrary $k\in\mathbb{N}_0$ let us make the following definition:

\begin{defi} We call a function $\left(u^{(0)},\ldots,u^{(k)}\right) \colon [t_0,T] \times \Omega\times \mathbb{R}^n \to \prod_{i=0}^k\mathbb{R}^{m\times_i n}$ a \textit{$k$-decoupling field} for $(\xi,(\mu,\sigma,f))$ if $\left(u^{(0)},\ldots,u^{(k)}\right)(T,\cdot) = (\xi,\ldots,\xi^{(k)})$ a.e. and if for all $t_0 \leq t_1\leq t_2 \leq T$ and all $\mathcal{F}_{t_1}$-measurable random vectors $X_{t_1}\colon \Omega \to \mathbb{R}^n$
there are progressively measurable processes $X\colon [t_{1},t_{2}] \times \Omega \to \mathbb{R}^n$ and
$\check{Y}^{(i)},\check{Z}^{(i)}\colon [t_{1},t_{2}] \times \Omega \to \mathbb{R}^{m \times_i n},\mathbb{R}^{(m \times d)\times_i n}$, where $i=0,\ldots,k$,
such that
\begin{align*}
    X_s &  = X_{t_1} + \int_{t_1}^s \mu(r,X_r,\check{Y}^{(0)}_r,\check{Z}^{(0)}_r) \dx r+\int_{t_1}^s \sigma(r,X_r,\check{Y}^{(0)}_r,\check{Z}^{(0)}_r) \dx W_r, \\
   \check{Y}^{(i)}_s & = \check{Y}^{(i)}_{t_2}-\int_{s}^{t_2}\varphi^{(i)}(r,X_r,\check{Y}^{(0)}_r,\check{Z}^{(0)}_r,\ldots,\check{Y}^{(i)}_r,\check{Z}^{(i)}_r)\dx r-\sum_{j=1}^d\int_{s}^{t_2}\check{Z}^{(i,j)}_r\mathrm{d} W^{(j)}_r, \\
     \check{Y}^{(i)}_s &  = u^{(i)}(s,X_s),
\end{align*}
a.s. for all $s \in [t_1,t_2]$ and all $i=0,\ldots,k$ and such that, in case $k\cdot L_{\sigma,z}>0$, $\|\check{Y}^{(1)}\|_\infty< L_{\sigma,z}^{-1}$. In particular, we assume that all integrals exist and that all $\varphi^{(i)}$, $i=1,\ldots,k$, are well-defined, such that recursion \eqref{phirecur} is satisfied.

We call a $k$-decoupling field a \emph{Markovian $k$-decoupling field} if the processes $X,\check{Y}^{(i)},\check{Z}^{(i)}$ can be chosen in such a way that all $\check{Z}^{(i)}$, $i=0,\ldots,k$, are essentially bounded processes.

We call a $k$-decoupling field \emph{weakly regular} if 
$$L_{u^{(0)},x}<L_{\sigma,z}^{-1},\quad \sup_{s\in[t_0,T]}\|u^{(0)}(s,0)\|_\infty<\infty\quad\textrm{and, in case $k>0$, also} $$
$$ \|u^{(1)}\|_\infty<L_{\sigma,z}^{-1}\quad\textrm{and}\quad\|u^{(i)}\|_\infty+L_{u^{(i)},x}<\infty\quad\textrm{ for all } i=1,\ldots,k. $$
\end{defi}

Note that a standard Markovian decoupling field is a Markovian $0$-decoupling field. Also, note that we do not require $u^{(i)}$ to be the $i$ - th derivative of $u^{(0)}$ at this point. Instead, this is something that needs to be shown under suitable conditions.

In order to be able to construct Markovian $k$-decoupling fields we make Lipschitz continuity requirements for $(\xi,(\mu,\sigma,f))$ which are a straightforward generalization of the more standard MLLC theory for Markovian decoupling fields:


\begin{definition}
We say that $(\xi,(\mu,\sigma,f))$ satisfies \emph{$k$-modified local Lipschitz conditions} ($k$-MLLC) if it satisfies (MLLC) and in addition the following is fulfilled:
\begin{itemize}
\item $\mu,\sigma,f$ are $k$ times classically differentiable w.r.t.\ $(x,y,z)$ everywhere with derivatives that are Lipschitz continuous in $(x,y,z)$ on sets of the form $[0,T]\times\mathbb{R}^n\times\mathbb{R}^m\times B$, where $B\subseteq\mathbb{R}^{m\times d}$ is a bounded set. In addition:
\item $\xi$ is $k$ times classically differentiable everywhere with Lipschitz continuous derivatives.
\end{itemize}
\end{definition}
In the above definition $k\in\mathbb{N}_0$. Also ($0$-MLLC) and (MLLC) are the same. We observe that
($k$-MLLC) translates into Lipschitz continuity properties for $\varphi^{(i)}$:
\begin{lemma}\label{phikregul}
Assume that $(\xi,(\mu,\sigma,f))$ satisfy ($k$-MLLC) for some $k\in\mathbb{N}_0$. Then for all $i\in\{0,\ldots,k\}$ the function $\varphi^{(i)}$ is well-defined and $k-i+1$ times weakly differentiable w.r.t.\ 
$$(x,\check{y}^{(0)},\check{z}^{(0)},\ldots,\check{y}^{(i)},\check{z}^{(i)})\in\mathbb{R}^{n}\times\prod_{l=0}^i\mathbb{R}^{m \times_l n}\times\mathbb{R}^{\left(m\times d\right) \times_l n}. $$ Furthermore, these derivatives, which in case $i\geq 1$ includes the function $\varphi^{(i)}$ itself, are essentially bounded on sets of the form $[0,T]\times\mathbb{R}^n\times\mathbb{R}^m\times A\times\prod_{l=1}^i B_l\times C_l$, where $A\subseteq\mathbb{R}^{m\times d}$, $B_l\subseteq\mathbb{R}^{m \times_l n}$ and $C_l\subseteq\mathbb{R}^{\left(m\times d\right) \times_l n}$ are compacts sets, such that, in case $i\geq 1$, the set $B_1$ is contained in the open ball of radius $L_{\sigma,z}^{-1}$.
\end{lemma}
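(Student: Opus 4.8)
The plan is to argue by induction on $i$ from $0$ to $k$, simultaneously tracking three properties: that $\varphi^{(i)}$ is well-defined, that it is $(k-i+1)$ times weakly differentiable in $(x,\check y^{(0)},\check z^{(0)},\dots,\check y^{(i)},\check z^{(i)})$, and that all its weak derivatives (and, for $i\ge 1$, the function itself) are essentially bounded on the indicated compact sets. The base case $i=0$ is immediate: $\varphi^{(0)}=f$, and under ($k$-MLLC) $f$ is $k$ times classically differentiable with Lipschitz $k$-th derivatives on sets $[0,T]\times\mathbb R^n\times\mathbb R^m\times A$ with $A$ bounded. Since a Lipschitz function admits a bounded weak derivative, $f$ is in fact $(k+1)=(k-0+1)$ times weakly differentiable there, with all derivatives of order $\ge 1$ essentially bounded; the function $f$ itself need not be bounded, which is exactly why the claim excludes $\varphi^{(0)}$ from the boundedness statement.

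Before the inductive step I would establish the regularity of the auxiliary functions $h^{(l)}$. The only delicate object is the inverse $(\mathrm{Id}_{m\times d}-\check y^{(1)}\sigma_z(\theta))^{-1}$ in \eqref{h1defi}. As computed in the Remark following Theorem \ref{derivdym}, whenever $\check y^{(1)}$ lies in a compact subset $B_1$ of the open ball of radius $L_{\sigma,z}^{-1}$ the operator norm of $\check y^{(1)}\sigma_z$ stays bounded by some $c<1$ uniformly; hence $\mathrm{Id}_{m\times d}-\check y^{(1)}\sigma_z$ takes values in a fixed compact subset of the invertible matrices on which matrix inversion is $C^\infty$ with bounded derivatives of every order (equivalently, the Neumann series $\sum_{p\ge 0}(\check y^{(1)}\sigma_z)^p$ and all its formal derivative series converge geometrically). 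Combining this with the fact that, under ($k$-MLLC), the first derivatives $\sigma_x,\sigma_y,\sigma_z$ are $k$ times weakly differentiable with locally bounded derivatives, the chain rule for weak derivatives (Lemma A.3.1 in \cite{Fromm2015}, cf.\ \cite{Ambrosio1990}) shows that $h^{(1)}$ is $k$ times weakly differentiable with essentially bounded derivatives on every such compact set; the remaining $h^{(l)}$, $l\ge 2$, are affine in $(\check y^{(l)},\check z^{(l)})$ with coefficients built from $\sigma_x,\sigma_y,\sigma_z$ and $h^{(1)}$, hence enjoy the same regularity. When $L_{\sigma,z}=0$ this step is trivial, since $\sigma_z\equiv 0$ and the inverse reduces to $\mathrm{Id}_{m\times d}$.

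For the inductive step I would assume the statement for $i-1$ and read off $\varphi^{(i)}$ from the recursion \eqref{phirecur}. Every summand is a finite product whose factors are of one of the following types: a first-order (or higher) derivative of $\varphi^{(i-1)}$, which by the induction hypothesis is $(k-i+1)$ times weakly differentiable and bounded; a first derivative of $\mu$ or $\sigma$, which is $k\ge k-i+1$ times weakly differentiable and bounded; one of the $h^{(l)}$, $l\le i$, treated above; or a polynomial entry of $\check y^{(l)},\check z^{(l)}$. On the compact sets in question all these factors and all their weak derivatives up to order $k-i+1$ are essentially bounded, so the Leibniz rule for weak derivatives (Section A.2 in \cite{Fromm2015}) applies termwise and shows that $\varphi^{(i)}$ is well-defined and $(k-i+1)$ times weakly differentiable, with all derivatives essentially bounded. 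Moreover, for $i\ge 1$ no summand contains an undifferentiated copy of $f,\mu,\sigma$ — only their derivatives appear — so $\varphi^{(i)}$ itself is bounded as well, which completes the induction.

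I expect the main obstacle to be the rigorous justification of the two calculus rules in the presence of only weak differentiability: namely, that composition with matrix inversion and the iterated products genuinely preserve weak differentiability of the stated order. Both reduce to the Leibniz and chain rules for Sobolev functions, which require the essential boundedness of the lower-order weak derivatives on compacta — precisely the uniform estimates furnished by ($k$-MLLC) together with the constraint $B_1\subset\{|v|<L_{\sigma,z}^{-1}\}$ that keeps $\|\check y^{(1)}\sigma_z\|<1$. Careful bookkeeping of which factor supplies which order of differentiability is what makes the count $k-i+1$ come out correctly.
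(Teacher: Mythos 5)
Your proposal is correct and follows essentially the same route as the paper's proof: induction over $i$, the observation that products preserve the "weakly differentiable with derivatives bounded on the specified compacta" property, a separate treatment of $h^{(1)}$ (and the affine $h^{(l)}$, $l\ge 2$) using that $B_1$ stays in the open ball of radius $L_{\sigma,z}^{-1}$, and then reading the inductive step off the recursion \eqref{phirecur}. Your additional details — the Neumann-series justification for the inverse and the remark that for $i\ge 1$ only derivatives of $f,\mu,\sigma$ appear, which is why $\varphi^{(i)}$ itself is bounded while $\varphi^{(0)}=f$ is excluded — merely flesh out steps the paper leaves implicit.
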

\begin{proof}
We conduct an inductive argument over $i$:

For $i=0$ there is nothing to prove, since $f=\varphi^{(0)}$ satisfies the requirements made in the definition of ($k$-MLLC). Note that a locally Lipschitz continuous function is weakly differentiable with a locally bounded derivative.

Now let us make the general observation that the property of a function of being certain times weakly differentiable w.r.t.\ the specified variables with derivatives bounded on the specified sets is maintained when multiplying two functions which already have this property.
It is, thus, straightforward to deduce from definition \eqref{h1defi} that $h^{(1)}$ is $k$ times weakly differentiable w.r.t.\ the specified variables with the derivatives (including $h^{(1)}$ itself) being bounded on the specified sets. 
Similarly, the functions 
$\theta_1\mapsto\sigma^{(j)}_{x}(\theta)+ \sigma^{(j)}_{y}(\theta)\check{y}^{(1)} +\sigma^{(j)}_{z}(\theta)h^{(1)}(\theta_1)$ and $\theta_1\mapsto\mu_{x}(\theta)+ \mu_{y}(\theta)\check{y}^{(1)} +\mu_{z}(\theta)h^{(1)}(\theta_1)$ are also $k$ times weakly differentiable w.r.t.\ the specified variables with derivatives bounded on the specified sets. The same applies to $h^{(i)}$ for all $i\geq 2$.

Now assume that the statement holds true for some $i\in\{0,\ldots,k-1\}$ and consider the definition of $\varphi^{(i+1)}$ (see \eqref{phirecur}). Note that, since $\varphi^{(i)}$ is $k-i+1$ times weakly differentiable in the above sense with derivatives that are locally bounded in the above sense, the weak derivatives $\varphi^{(i)}_x$, $\varphi^{(i)}_{\check{y}^{(l)}}$, $\varphi^{(i)}_{\check{z}^{(l)}}$, $l=0,\ldots,i$, are $k-i$ times weakly differentiable in the above sense with derivatives locally bounded in the above sense. This directly translates into $\varphi^{(i+1)}$ having the same property due to its definition.
\end{proof}

We define $\theta_{i-}:=(t,\omega,x,\check{y}^{(0)},\check{z}^{(0)},\ldots,\check{y}^{(i-1)},\check{z}^{(i-1)},\check{y}^{(i)})$ such that $\theta_{i}=\left(\theta_{i-},\check{z}^{(i)}\right)$.
$\varphi^{(k)}$ has the following interesting structural property regarding its dependence on the variable $\check{z}^{(k)}$. 

\begin{propo}\label{phiderivP}
Assume that $(\xi,(\mu,\sigma,f))$ satisfy ($k$-MLLC) for some $k\in\mathbb{N}_0$. Then $\varphi^{(k)}_{\check{z}^{(k)}}$ can be expressed as a function of $\theta_1$ only. Moreover, it is merely a function of $\theta_{1-}$ in case $L_{\sigma,z}=0$.
\end{propo}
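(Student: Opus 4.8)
The plan is to prove the statement by induction on $k$, differentiating the recursion \eqref{phirecur} directly and tracking precisely which summands depend on the variable $\check{z}^{(k)}$. By Lemma \ref{phikregul} the function $\varphi^{(k)}$ is weakly differentiable w.r.t.\ all its variables, so $\varphi^{(k)}_{\check{z}^{(k)}}$ is well-defined; moreover, for $k\geq 2$ the derivative is even independent of $\check{z}^{(k)}$, since $\varphi^{(k)}$ is affine linear in $(\check{y}^{(k)},\check{z}^{(k)})$ as noted after \eqref{phirecur}.

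First I would isolate the $\check{z}^{(k)}$-dependent terms of \eqref{phirecur}. The factor $\varphi^{(k-1)}_x(\theta_{k-1})$ and all coefficients $\varphi^{(k-1)}_{\check{y}^{(i)}}(\theta_{k-1})$, $\varphi^{(k-1)}_{\check{z}^{(i)}}(\theta_{k-1})$ depend only on $\theta_{k-1}$ and hence not on $\check{z}^{(k)}$; likewise $\check{y}^{(i+1)}$ for $i\leq k-1$ never equals $\check{z}^{(k)}$, and the coefficients $\mu_x+\mu_y\check{y}^{(1)}+\mu_z h^{(1)}$ and $\sigma^{(j)}_x+\sigma^{(j)}_y\check{y}^{(1)}+\sigma^{(j)}_z h^{(1)}$ are functions of $\theta_1$ only. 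Finally $h^{(i+1)}(\theta_{i+1})$ with $i<k-1$ does not involve $\check{z}^{(k)}$, because $\theta_{i+1}$ only carries variables up to $\check{z}^{(i+1)}$ with $i+1<k$. Thus only two contributions survive differentiation: the summand $\varphi^{(k-1)}_{\check{z}^{(k-1)}}(\theta_{k-1})\,h^{(k)}(\theta_k)$ (the $i=k-1$ term of the sum) and the final line $-\sum_j \check{z}^{(k,j)}\bigl(\sigma^{(j)}_x+\sigma^{(j)}_y\check{y}^{(1)}+\sigma^{(j)}_z h^{(1)}\bigr)$.

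Next I would differentiate these two terms. The last line is linear in $\check{z}^{(k)}$ with coefficients depending only on $\theta_1$, so its $\check{z}^{(k)}$-derivative is a function of $\theta_1$. For the remaining term I use that for $k\geq 2$ the definition of $h^{(k)}$ gives $h^{(k,j)}=\check{z}^{(k,j)}+(\text{terms free of }\check{z}^{(k)})$, whence $\partial_{\check{z}^{(k)}}h^{(k)}=\mathrm{Id}$; therefore the $\check{z}^{(k)}$-derivative of $\varphi^{(k-1)}_{\check{z}^{(k-1)}}(\theta_{k-1})h^{(k)}(\theta_k)$ reduces to $\varphi^{(k-1)}_{\check{z}^{(k-1)}}(\theta_{k-1})$, which by the induction hypothesis is a function of $\theta_1$ alone. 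Adding the two contributions shows that $\varphi^{(k)}_{\check{z}^{(k)}}$ is a function of $\theta_1$, closing the induction for $k\geq 2$. The base cases are $k=0$, where $\varphi^{(0)}_{\check{z}^{(0)}}=f_z(\theta)$ depends only on $\theta_0$ (a subtuple of $\theta_1$), and $k=1$, which must be treated by hand since $h^{(1)}$ has the non-affine form \eqref{h1defi} with $\partial_{\check{z}^{(1)}}h^{(1)}=(\mathrm{Id}_{m\times d}-\check{y}^{(1)}\sigma_z(\theta))^{-1}$; one then checks directly that every surviving coefficient is a function of $\theta_1$.

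Finally, for the refinement when $L_{\sigma,z}=0$ I would observe that $\sigma$, and hence each $\sigma^{(j)}$, is independent of $z$, so $\sigma_z\equiv 0$. This turns $(\mathrm{Id}_{m\times d}-\check{y}^{(1)}\sigma_z)^{-1}$ into $\mathrm{Id}_{m\times d}$ and removes the $\sigma^{(j)}_z h^{(1)}$ summand from the surviving coefficients, so that all of them become functions of $\theta_{1-}$ only, no longer seeing $\check{z}^{(1)}$. Running the same induction with ``$\theta_1$'' replaced by ``$\theta_{1-}$'' then yields the sharper conclusion. The main obstacle I anticipate is not conceptual but notational: correctly interpreting the generalized-matrix products and the component-wise convention of Section \ref{notations} so that $\partial_{\check{z}^{(k)}}h^{(k)}=\mathrm{Id}$ holds in the right space $\mathbb{R}^{(m\times d)\times_k n}$ and the surviving term genuinely lands in $\mathbb{R}^{m\times_k n}$ as $\varphi^{(k-1)}_{\check{z}^{(k-1)}}(\theta_{k-1})$, together with keeping the special form of $h^{(1)}$ (versus $h^{(k)}$ for $k\geq 2$) straight throughout the induction.
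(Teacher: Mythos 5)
Your proposal is correct and follows essentially the same route as the paper: induction on $k$, isolating the two $\check{z}^{(k)}$-dependent terms of \eqref{phirecur}, using that $\partial_{\check{z}^{(k)}}h^{(k)}=\mathrm{Id}$ for $k\geq 2$ together with the induction hypothesis on $\varphi^{(k-1)}_{\check{z}^{(k-1)}}$, handling $k=1$ separately, and obtaining the $\theta_{1-}$ refinement from $\sigma_z\equiv 0$ when $L_{\sigma,z}=0$. No gaps.
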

\begin{proof}
We prove the statement using induction over $k$. For $k=0$ the statement is clearly true as $\varphi^{(0)}=f$. Now let $k\in\mathbb{N}_1$ and assume the statement is true up to $k-1$. Now consider the definition of $\varphi^{(k)}$ (see \eqref{phirecur}). Note that there are only two parts which depend on $\check{z}^{(k)}$: The summand
$$ \varphi^{(k-1)}_{\check{z}^{(k-1)}}(\theta_{k-1})h^{(k)}(\theta_{k}) $$
and the sum
$$ \sum_{j=1}^d \check{z}^{(k,j)} \left(\sigma^{(j)} _{x}(\theta) + \sigma^{(j)} _{y}(\theta) \check{y}^{(1)} +\sigma^{(j)} _{z}(\theta) h^{(1)}(\theta_1)\right). $$
Note that the derivative of the above sum w.r.t.\ $\check{z}^{(k)}$ depends only on the factor $\sigma^{(j)} _{x}(\theta) + \sigma^{(j)} _{y}(\theta) \check{y}^{(1)} +\sigma^{(j)} _{z}(\theta) h^{(1)}(\theta_1)$, which in turn is a function of $\theta_1$ or even just $(\theta,\check{y}^{(1)})=\theta_{1-}$ if $L_{\sigma,z}=0$.

We already know that $\varphi^{(k-1)}_{\check{z}^{(k-1)}}(\theta_{k-1})$ depends on $\theta_1$ only or even just on  $(\theta,\check{y}^{(1)})$ in case $L_{\sigma,z}=0$ (induction hypothesis). Also note that $h^{(k)}(\theta_{k})$ is linear in $\check{z}^{(k)}$:
$$ h^{(k,j)}(\theta_k)=
\check{z}^{(k,j)} + \check{y}^{(k)} \left(\sigma^{(j)}_{x}(\theta)+ \sigma^{(j)}_{y}(\theta)\check{y}^{(1)} +\sigma^{(j)}_{z}(\theta)h^{(1)}(\theta_1)\right), \quad j=1,\ldots,d. $$
Thus, if $L_{\sigma,z}=0$, then $\sigma^{(j)}_{z}$ vanishes and $\varphi^{(k)}_{\check{z}^{(k)}}$ indeed depends on $(\theta,\check{y}^{(1)})$ only.

Now consider the case $L_{\sigma,z}>0$. If $k=1$ there is nothing to proof as all terms depend on $\theta_1$ only. In case $k\geq 2$ the factor $\varphi^{(k-1)}_{\check{z}^{(k-1)}}(\theta_{k-1})$ does not depend on $\check{z}^{(k)}$ and, thus, we only need to apply the derivative w.r.t.\ $\check{z}^{(k)}$ to the factor $h^{(k)}(\theta_k)$ which in fact results in the identity because $k>1$.
\end{proof}

\begin{lemma}\label{phideriv}
 Assume that $(\xi,(\mu,\sigma,f))$ satisfy ($k$-MLLC) for some $k\in\mathbb{N}_0$. If $k\geq 3$ then the weak derivative $\varphi^{(k)}_{\check{z}^{(k-1)}}$ can be expressed as a function of $\theta_{k-}$ only. If $L_{\sigma,z}=0$ the same holds true even for $k=2$.
\end{lemma}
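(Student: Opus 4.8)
The plan is to read the claim directly off the recursion \eqref{phirecur} for $\varphi^{(k)}$, tracking precisely which summands carry a dependence on the top-order variable $\check{z}^{(k)}$, and then to show that the operator $\partial_{\check{z}^{(k-1)}}$ annihilates exactly those summands. Since $\varphi^{(k)}_{\check{z}^{(k-1)}}$ is by construction a function on $\Theta_k$, i.e.\ a function of $\theta_k=(\theta_{k-},\check{z}^{(k)})$, proving that it does not depend on $\check{z}^{(k)}$ is the same as proving that it is a function of $\theta_{k-}$ only, which is the assertion. Throughout I would invoke Lemma \ref{phikregul}, which guarantees that $\varphi^{(k)}$ is once and $\varphi^{(k-1)}$ twice weakly differentiable with locally bounded derivatives, so that \eqref{phirecur} may legitimately be differentiated term by term and the relevant products obey the product rule for weak derivatives.

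First I would inspect \eqref{phirecur} and list the only two places where $\check{z}^{(k)}$ enters. The variable $\check{z}^{(k)}$ appears (i) explicitly in the final sum $\sum_{j=1}^d \check{z}^{(k,j)}\bigl(\sigma^{(j)}_x(\theta)+\sigma^{(j)}_y(\theta)\check{y}^{(1)}+\sigma^{(j)}_z(\theta)h^{(1)}(\theta_1)\bigr)$, and (ii) through the factor $h^{(k)}(\theta_k)$ in the single summand $\varphi^{(k-1)}_{\check{z}^{(k-1)}}(\theta_{k-1})\,h^{(k)}(\theta_k)$ obtained for $i=k-1$ in the sum over $i$ in \eqref{phirecur}; indeed $h^{(k)}$ is affine in $\check{z}^{(k)}$, whereas for $i\le k-2$ the factor $h^{(i+1)}(\theta_{i+1})$ involves only $\check{z}^{(1)},\dots,\check{z}^{(i+1)}$ with $i+1\le k-1$. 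Every remaining summand depends only on $\theta_{k-1}$ and on $\check{y}^{(1)},\dots,\check{y}^{(k)}$, hence on $\theta_{k-}$-variables alone, and so contributes only $\theta_{k-}$-functions after differentiation.

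Next I would show that $\partial_{\check{z}^{(k-1)}}$ kills both contributions when $k\ge 3$. For the explicit sum (i), the coefficients $\sigma^{(j)}_x(\theta)+\sigma^{(j)}_y(\theta)\check{y}^{(1)}+\sigma^{(j)}_z(\theta)h^{(1)}(\theta_1)$ are functions of $\theta_1$ only; since $k-1\ge 2$ the variable $\check{z}^{(k-1)}$ is not among the components of $\theta_1$, so the $\check{z}^{(k-1)}$-derivative of (i) vanishes. For the summand (ii) I would apply the product rule: one term carries $\partial_{\check{z}^{(k-1)}}h^{(k)}(\theta_k)$, which is zero because for $k\ge 3$ the function $h^{(k)}$ depends only on $\check{z}^{(k)}$, $\check{y}^{(k)}$ and $\theta_1$-variables, none of which is $\check{z}^{(k-1)}$; the other term carries $\partial_{\check{z}^{(k-1)}}\varphi^{(k-1)}_{\check{z}^{(k-1)}}(\theta_{k-1})$, which is zero because Proposition \ref{phiderivP}, applied to the order-$(k-1)$ problem, shows $\varphi^{(k-1)}_{\check{z}^{(k-1)}}$ to be a function of $\theta_1$ only, again free of $\check{z}^{(k-1)}$ as $k-1\ge 2$. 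Hence all $\check{z}^{(k)}$-carrying terms disappear under $\partial_{\check{z}^{(k-1)}}$, and $\varphi^{(k)}_{\check{z}^{(k-1)}}$ is a function of $\theta_{k-}$.

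Finally I would treat the boundary case $k=2$ under the extra hypothesis $L_{\sigma,z}=0$, where $\check{z}^{(k-1)}=\check{z}^{(1)}$ does lie in $\theta_1$ and the index count above fails. Here I would instead use the stronger conclusion of Proposition \ref{phiderivP} that, when $L_{\sigma,z}=0$, $\varphi^{(1)}_{\check{z}^{(1)}}$ depends on $\theta_{1-}$ only, which excludes $\check{z}^{(1)}$; simultaneously $\sigma_z=0$ forces $h^{(2)}(\theta_2)=\check{z}^{(2,j)}+\check{y}^{(2)}\bigl(\sigma^{(j)}_x(\theta)+\sigma^{(j)}_y(\theta)\check{y}^{(1)}\bigr)$, which is visibly independent of $\check{z}^{(1)}$, while the coefficients in the explicit sum (i) lose their $h^{(1)}$-term and become $\check{z}^{(1)}$-free as well. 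Both product-rule terms again vanish and the conclusion follows verbatim. The main obstacle I anticipate is purely the bookkeeping: being scrupulous about which index ranges let $h^{(i+1)}$ or the various derivative factors reach down to $\check{z}^{(k-1)}$ or up to $\check{z}^{(k)}$, and confirming that Lemma \ref{phikregul} supplies enough weak differentiability to justify the term-by-term product-rule differentiation; the conceptual content reduces entirely to Proposition \ref{phiderivP} together with the affine structure of $h^{(k)}$.
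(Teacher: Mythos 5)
Your proposal is correct and follows essentially the same route as the paper's proof: isolate the two $\check{z}^{(k)}$-carrying terms in \eqref{phirecur} (the explicit sum and the $i=k-1$ summand $\varphi^{(k-1)}_{\check{z}^{(k-1)}}h^{(k)}$), and kill them under $\partial_{\check{z}^{(k-1)}}$ via Proposition \ref{phiderivP} together with the observation that $h^{(k)}$ and the $\sigma$-coefficients depend only on $\theta_1$ (resp.\ $\theta_{1-}$ when $L_{\sigma,z}=0$), which excludes $\check{z}^{(k-1)}$ for $k\geq 3$ (resp.\ $k\geq 2$). Your write-up is in fact more explicit than the paper's about the product rule and the harmless remaining summands, but the argument is the same.
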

\begin{proof}
Consider the definition of $\varphi^{(k)}$ according to \eqref{phirecur}. After differentiating $\varphi^{(k)}$ w.r.t.\ $\check{z}^{(k-1)}$ all parts which depend on $\check{z}^{(k)}$ disappear including the summand $\varphi^{(k-1)}_{\check{z}^{(i)}}(\theta_{k-1})h^{(i+1)}(\theta_{i+1})$ for $i=k-1$: $\varphi^{(k-1)}_{\check{z}^{(k-1)}}$ does not depend on $\check{z}^{(k-1)}$ according to Proposition \ref{phiderivP} and neither does $h^{(k)}(\theta_{k})$. This is because for $k\geq 3$ the variable $\check{z}^{(k-1)}$ is not contained in $\theta_1$, while for $k\geq 2$ and $L_{\sigma,z}=0$ it is not even contained in $\theta_{1-}$.
\end{proof}

\section{Main results for higher derivatives}\label{mainR}

Before developing a theory for ($k$-MLLC) problems let us make a few important definitions:

\begin{definition}\label{def:regularity decoupling}
   Let  $\left(u^{(0)},\ldots,u^{(k)}\right) \colon [t_0,T] \times \Omega\times \mathbb{R}^n \to \prod_{i=0}^k\mathbb{R}^{m\times_i n}$ be a weakly regular Markovian $k$-decoupling field for $(\xi,(\mu,\sigma,f))$. We say that it is \emph{strongly regular} if for all fixed $t_1,t_2\in[t_0,T]$, $t_1\leq t_2,$ the processes $X,\check{Y}^{(i)},\check{Z}^{(i)}$, $i=0,\ldots,k$, arising in the definition of a Markovian $k$-decoupling field are a.e unique and satisfy
   \begin{equation}\label{strong_regul1}
      \sup_{s\in [t_1,t_2]}\mathbb{E}_{t_1,\infty}[|X_s|^2]+\sup_{s\in [t_1,t_2]}\mathbb{E}_{t_1,\infty}[|\check{Y}^{(0)}_s|^2]<\infty, 
   \end{equation}
   for each constant initial value $X_{t_1}=x\in\mathbb{R}$. In addition they are required to be measurable as functions of $(x,s,\omega)$ and even weakly differentiable w.r.t.\ $x\in\mathbb{R}^n$ such that for every $s\in[t_1,t_2]$ the mappings $X_s$, $\check{Y}^{(i)}_s$, $i=0,\ldots,k$ are measurable functions of $(x,\omega)$ and even weakly differentiable w.r.t.\ $x$ such that
   \begin{align}\label{strong_regul2}
     &\esssup_{x\in\mathbb{R}}\sup_{s\in [t_1,t_2]}\mathbb{E}_{t_1,\infty}\left[\left|\partial_x X_s\right|^2\right]<\infty, \nonumber\allowdisplaybreaks\\
     &\esssup_{x\in\mathbb{R}}\sup_{s\in [t_1,t_2]}\mathbb{E}_{t_1,\infty}\left[\left|\partial_x \check{Y}^{(i)}_s\right|^2\right]<\infty, \nonumber\allowdisplaybreaks \\
     &\esssup_{x\in\mathbb{R}}\mathbb{E}_{t_1,\infty}\left[\int_{t_1}^{t_2}\left|\partial_x \check{Z}^{(i)}_s\right|^2\dx s\right]<\infty, \quad i=0,\ldots,k.
   \end{align}
\end{definition}

\begin{definition}   Let $\left(u^{(0)},\ldots,u^{(k)}\right)\colon [t_0,T] \times \Omega\times \mathbb{R}^n \to \prod_{i=0}^k\mathbb{R}^{m\times_i n}$ be a Markovian $k$-decoupling field for $(\xi,(\mu,\sigma,f))$. We call $\left(u^{(0)},\ldots,u^{(k)}\right)$ \emph{controlled in $z$} if there exists a constant $C>0$ such that for all $[t_1,t_2]\subseteq[t_0,T]$,
$t_1\leq t_2$, and all initial values $X_{t_1}$, the corresponding processes $X,\check{Y}^{(i)},\check{Z}^{(i)}$, $i=0,\ldots,k$, from the definition of a Markovian
$k$-decoupling field satisfy $|\check{Z}^{(i)}_{s}(\omega)|\leq C$, $i=0,\ldots,k$, for almost all $(s,\omega)\in[t_0,T]$. If for a fixed
triple $(t_1,t_2,X_{t_1})$ there are different choices for $X,\check{Y}^{(i)},\check{Z}^{(i)}$, then all of them are supposed to satisfy
the above control. 
\end{definition}

%

\begin{definition}
For arbitrary $k\in\mathbb{N}_0$ we define the \emph{$k$-maximal interval} $J^{k}_{\max}\subseteq [0,T]$ as the union of all intervals $[t_0,T]\subseteq [0,T]$ on which a weakly regular Markovian $k$-decoupling field exists.
\end{definition}

We have existence, uniqueness and regularity on the maximal interval:

\begin{thm}\label{1-global}
Assume that $(\xi,(\mu,\sigma,f))$ satisfy ($k$-MLLC) for some $k\in\mathbb{N}_1$. \\ Then there exists a unique weakly regular Markovian $k$-decoupling field $\left(u^{(0)},\ldots,u^{(k)}\right)$ on $J^{k}_{\max}$. \\ This Markovian $k$-decoupling field is deterministic, continuous, strongly regular and controlled in $z$. \\
Furthermore, if $J^{k}_{\max}=(s^{k}_{\min},T]$ with some $s^{k}_{\min}\in[0,T)$, then 
$$\sup_{t\in J^{k}_{\max}} L_{u^{(0)}(t,\cdot),x}=L_{\sigma,z}^{-1}\quad\textrm{or}\quad\sup_{t\in J^{k}_{\max}} \|u^{(1)}(t,\cdot)\|_\infty=L_{\sigma,z}^{-1} $$
$$ \textrm{or}\quad\sup_{t\in J^{k}_{\max}} L_{u^{(i)}(t,\cdot),x}=\infty\quad\textrm{or}\quad\sup_{t\in J^{k}_{\max}}  \|u^{(j)}(t,\cdot)\|_\infty=\infty$$ 
for some $(i,j)\in\{1,\ldots,k\}\times\{2,\ldots,k\}$. Otherwise, $J^{k}_{\max}=[0,T]$ holds.
\end{thm}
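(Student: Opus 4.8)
The plan is to realize a weakly regular Markovian $k$-decoupling field as the ordinary Markovian decoupling field of the augmented FBSDE whose forward component is $X$ and whose backward component is the stacked process $(\check{Y}^{(0)},\ldots,\check{Y}^{(k)})$ with generators $(\varphi^{(0)},\ldots,\varphi^{(k)})$ and terminal condition $(\xi,\ldots,\xi^{(k)})$, i.e.\ the system \eqref{genYcheck}. The decisive structural feature is that $\mu,\sigma,f$, and hence the augmented diffusion, depend only on the level-$0$ block $(\check{Y}^{(0)},\check{Z}^{(0)})$, while each $\varphi^{(i)}$ depends only on levels $0,\ldots,i$, so the coupling is lower-triangular. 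Because the augmented diffusion sees only level $0$, the well-posedness of the forward feedback requires $L_{u^{(0)},x}L_{\sigma,z}<1$, whereas the well-definedness of $h^{(1)}$ (and thereby of every $\varphi^{(i)}$) requires $\|u^{(1)}\|_\infty L_{\sigma,z}<1$, exactly as in the remark following Theorem \ref{derivdym}. This is precisely why weak regularity of a $k$-decoupling field constrains only $L_{u^{(0)},x}$ and $\|u^{(1)}\|_\infty$ by $L_{\sigma,z}^{-1}$, while the higher $u^{(i)}$ are merely required to be Lipschitz; a naive reduction asking $L_{U,x}<L_{\sigma,z}^{-1}$ for the whole augmented field $U=(u^{(0)},\ldots,u^{(k)})$ would be far too strong. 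The whole proof consists in transporting the MLLC machinery recalled in Theorems \ref{CONtrollM}--\ref{EXPlosionM} to this partially decoupled system.

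\textbf{Local theory.} First I would render the generators Lipschitz by an inner cutoff in the sense of Section \ref{notations}: cut off $\check{y}^{(1)}$ at a level strictly below $L_{\sigma,z}^{-1}$ and cut off $\check{z}^{(0)},\ldots,\check{z}^{(k)}$ and $\check{y}^{(2)},\ldots,\check{y}^{(k)}$ at a large radius $R$. By Lemma \ref{phikregul} the resulting $\varphi^{(i)}$ are then Lipschitz in $(x,\check y,\check z)$ on the truncation set, so the augmented problem meets the MLLC-type hypotheses underlying the local theory. Exploiting the triangular structure, so that the forward contraction needs only $L_{u^{(0)},x}<L_{\sigma,z}^{-1}$ rather than a bound on the full augmented field, I would invoke the local existence, uniqueness, control-in-$z$ and strong-regularity results of \cite{Fromm2015,Proemel2015} (the analogues of Theorems \ref{CONtrollM} and \ref{UNIqMREGulM}) to obtain on a small interval $[T-\delta,T]$ a unique, deterministic, strongly regular field for the cutoff system. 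Control in $z$ is the Markovian identity generalizing Theorem \ref{CONtrollM}: $\check{Z}^{(i)}$ equals the $x$-derivative of $u^{(i)}$ contracted with $\sigma$, hence is bounded once $\sigma$ is bounded and $u^{(i)}$ is Lipschitz (implicitly for $i=0$, explicitly for $i\ge 1$). One then checks that for this weakly regular solution the cutoffs are passive, i.e.\ $\|\check{Y}^{(1)}\|_\infty<L_{\sigma,z}^{-1}$ and all $\check{Y}^{(i)},\check{Z}^{(i)}$ stay below $R$, so that it solves the uncut problem and uniqueness transfers back.

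\textbf{Global existence and the dichotomy.} Global existence on $J^{k}_{\max}$ follows by the usual small-interval induction (pasting adjacent fields, as for Theorems \ref{globalexist} and \ref{GLObalexistM}); the union of all intervals carrying a weakly regular field carries one, inheriting determinism, continuity, strong regularity and control in $z$. Note the nesting $J^{k}_{\max}\subseteq\cdots\subseteq J^{0}_{\max}$, since restricting a $k$-field to its lower components yields a weakly regular $(k-1)$-field; this lets me organize the dichotomy inductively and shows that the level-$0$ quantity $\|u^{(0)}(s,\cdot,0)\|_\infty$ cannot blow up on its own, because any level-$0$ singularity at $s^{k}_{\min}$ is already forced by Theorem \ref{EXPlosionM} to be the mode $L_{u^{(0)},x}\to L_{\sigma,z}^{-1}$. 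For the dichotomy proper I argue by contraposition: assume $J^{k}_{\max}=(s^{k}_{\min},T]$ yet all four suprema stay subcritical, i.e.\ $\sup_t L_{u^{(0)}(t,\cdot),x}<L_{\sigma,z}^{-1}$, $\sup_t\|u^{(1)}(t,\cdot)\|_\infty<L_{\sigma,z}^{-1}$, and $\sup_t\bigl(L_{u^{(i)}(t,\cdot),x}+\|u^{(j)}(t,\cdot)\|_\infty\bigr)<\infty$ for all admissible $i,j$. Then $\check{y}^{(1)}$ stays bounded away from the singular set of $(\mathrm{Id}_{m\times d}-\check{y}^{(1)}\sigma_{z})^{-1}$, so by Lemma \ref{phikregul} the generators are uniformly Lipschitz on the region actually visited, the terminal data $u^{(i)}(t,\cdot)$ have uniformly bounded values and Lipschitz constants, and the local step yields an extension length $\delta>0$ uniform in the base point. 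Letting $t\downarrow s^{k}_{\min}$ and pasting extends a weakly regular field to $[s^{k}_{\min}-\delta',T]$, contradicting maximality; hence at least one supremum is critical.

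\textbf{Main obstacle.} I expect the dichotomy to be the crux, specifically disentangling the four blow-up modes and proving that the extension length $\delta$ can be chosen uniformly as $t\downarrow s^{k}_{\min}$. This hinges on the non-Lipschitz, non-polynomial dependence of $\varphi^{(i)}$ on $\check{y}^{(1)}$ through $(\mathrm{Id}_{m\times d}-\check{y}^{(1)}\sigma_{z})^{-1}$: only the a priori bound $\|u^{(1)}\|_\infty<L_{\sigma,z}^{-1}$ keeps $h^{(1)}$ and every higher coefficient uniformly controlled, which is exactly why the second mode reads $\|u^{(1)}\|_\infty\to L_{\sigma,z}^{-1}$ rather than $\to\infty$. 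A secondary technical point is that for $i\ge 2$, where $\varphi^{(i)}$ is affine in $(\check{y}^{(i)},\check{z}^{(i)})$, the level-$i$ equation is a genuinely linear BSDE once the lower levels are fixed, so boundedness of the lower-level quantities propagates to bounds on $\|u^{(i)}\|_\infty$ and $L_{u^{(i)},x}$; making this propagation quantitative and uniform is what makes the level-by-level induction in $k$ close.
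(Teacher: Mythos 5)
You have the right architecture (augment the system via \eqref{genYcheck}, render the generators Lipschitz with inner cutoffs, reduce to the MLLC theory, then run a small-interval induction and prove the dichotomy by contraposition with a uniform extension length), and you correctly identify the central obstacle: a naive reduction would demand $L_{U,x}L_{\sigma,z}<1$ for the whole augmented field $U=(u^{(0)},\ldots,u^{(k)})$, which weak regularity of a $k$-decoupling field does not provide. But your proposed resolution of that obstacle is a gap. You say you would ``invoke'' the local existence/uniqueness/regularity results of \cite{Fromm2015,Proemel2015} while ``exploiting the triangular structure, so that the forward contraction needs only $L_{u^{(0)},x}<L_{\sigma,z}^{-1}$''. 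Those results, as stated and proved, have no such triangular version: their weak-regularity hypothesis is the bound $L_{u,x}<L_{\sigma,z}^{-1}$ on the \emph{entire} backward component of whatever system you feed them. Making your step rigorous would mean reworking the fixed-point and regularity arguments of Chapter 2 of \cite{Fromm2015} for lower-triangular systems, which you do not do and which is not a routine citation.

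The paper closes exactly this gap with a device you are missing: the spatial scaling $\bar{u}^{(i)}(t,x)=\lambda^{-i}u^{(i)}(t,\lambda^{-1}x)$, corresponding to $\bar\xi(x)=\xi(\lambda^{-1}x)$, $(\bar\mu,\bar\sigma)(t,x,y,z)=\lambda(\mu,\sigma)(t,\lambda^{-1}x,y,z)$, $\bar f(t,x,y,z)=f(t,\lambda^{-1}x,y,z)$. This leaves the critical product invariant, $L_{\bar u^{(0)},x}L_{\bar\sigma,z}=L_{u^{(0)},x}L_{\sigma,z}<1$, while shrinking the higher blocks, $L_{\bar u^{(i)},x}=\lambda^{-(i+1)}L_{u^{(i)},x}$ for $i\ge 1$. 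Choosing $\lambda$ large (depending monotonically on the finite constants $L_{u^{(i)},x}$) forces $L_{(\bar u^{(0)},\ldots,\bar u^{(k)}),x}L_{\bar\sigma,z}<1$, so the standard MLLC machinery applies verbatim as a black box to the transformed, cut-off system; uniqueness, strong regularity, determinism, continuity and control in $z$ then transfer back through the (invertible) scaling, and the same construction gives the uniform lower bound on the extension length needed for the dichotomy. Without this scaling (or an equivalent substitute, e.g.\ a full redevelopment of the local theory for triangular systems), your local step does not go through, and everything downstream --- the pasting argument, the uniform $\delta$, and the contradiction at $s^{k}_{\min}$ --- rests on it. A secondary point handled more carefully in the paper than in your sketch is the passiveness of the cutoffs: the cutoff radius $c_i$ for $\check{Y}^{(i)}$, $i\ge 2$, must be chosen \emph{after} $c,c_2,\ldots,c_{i-1}$, using that the level-$i$ equation is an affine-linear BSDE whose solution stays bounded independently of its own cutoff, and the interval may additionally need to be shrunk to keep the level-$1$ cutoff passive.
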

\begin{proof}
Firstly, consider the following transformation: For a weakly regular Markovian $k$-decoupling field $\left(u^{(0)},\ldots,u^{(k)}\right)$  on an interval $[t_0,T]$ and a parameter $\lambda>0$ we can define $\left(\bar{u}^{(0)},\ldots,\bar{u}^{(k)}\right)$ via 
$$\bar{u}^{(i)}(t,x)=\lambda^{-i}u^{(i)}(t,\lambda^{-1}x),\quad i=0,\ldots,k.$$
It is straightforward to verify that $\left(\bar{u}^{(0)},\ldots,\bar{u}^{(k)}\right)$ is a Markovian $k$-decoupling field to the problem given by $(\bar\xi,(\bar\mu,\bar\sigma,\bar{f}))$, where $\bar\xi(x):=\xi(\lambda^{-1}x)$,
$$ (\bar\mu,\bar\sigma)(t,x,y,z):=\lambda\cdot(\mu,\sigma)(t,\lambda^{-1}x,y,z) \quad\textrm{and}\quad \bar{f}(t,x,y,z):=f(t,\lambda^{-1}x,y,z). $$
Note that the corresponding $h^{(i)}$, $\varphi^{(i)}$ also change: 
$$ \bar{h}^{(i)}(t,x,\check{y}^{(0)},\check{z}^{(0)},\ldots,\check{y}^{(i)},\check{z}^{(i)})=\lambda^{-i}h^{(i)}(t,\lambda^{-1}x,\lambda^{0}\check{y}^{(0)},\lambda^{0}\check{z}^{(0)},\ldots,\lambda^{i}\check{y}^{(i)},\lambda^{i}\check{z}^{(i)}), $$
$$ \bar{\varphi}^{(i)}(t,x,\check{y}^{(0)},\check{z}^{(0)},\ldots,\check{y}^{(i)},\check{z}^{(i)})=\lambda^{-i}\varphi^{(i)}(t,\lambda^{-1}x,\lambda^{0}\check{y}^{(0)},\lambda^{0}\check{z}^{(0)},\ldots,\lambda^{i}\check{y}^{(i)},\lambda^{i}\check{z}^{(i)}). $$
From the transformed problem we can obtain the previous using the same transformation, but with the parameter $\lambda^{-1}$ instead of $\lambda$.
The two problems are essentially equivalent to each other, however, while $L_{\bar{u}^{(0)},x}=\lambda^{-1}L_{u^{(0)},x}$, $L_{\bar\sigma,z}=\lambda L_{\sigma,z}$ and
$L_{\bar{u}^{(0)},x}L_{\bar\sigma,z}=L_{u^{(0)},x}L_{\sigma,z}<1$, we have $L_{\bar{u}^{(i)},x}=\lambda^{-(i+1)} L_{u^{(i)},x}$ for $i\geq 1$. By choosing $\lambda$ sufficiently large we can make sure that $L_{\left(\bar{u}^{(0)},\ldots,\bar{u}^{(k)}\right),x}L_{\bar\sigma,z}<1$ even though $L_{\left(u^{(0)},\ldots,u^{(k)}\right),x}L_{\sigma,z}$ is possibly not below $1$. \\
The size of the $\lambda>0$ necessary to make sure that $1-L_{\left(\bar{u}^{(0)},\ldots,\bar{u}^{(k)}\right),x}L_{\bar\sigma,z}\geq \frac{1}{2}(1-L_{u^{(0)},x}L_{\sigma,z})$ depends on $L_{u^{(0)},x}L_{\sigma,z}<1$ and also the values $L_{u^{(i)},x}<\infty$, $i=1,\ldots,k$, and is monotonically increasing in them. \\
Furthermore, by choosing $\lambda>0$ sufficiently large, we can make sure that $\left(\bar{u}^{(0)},\ldots,\bar{u}^{(k)}\right)$ constructed as above is a weakly regular Markovian decoupling field on $[t_0,T]$ to an (MLLC) problem given by \eqref{genYcheck} with $i=0,\ldots,k$ and $\bar{h}^{(i)}$, $\bar{\varphi}^{(i)}$ instead of $h^{(i)}$, $\varphi^{(i)}$: In order to ensure (MLLC) we must use a passive "inner cutoff" for $\check{Y}^{(1)}$ which truncates every ${n\times m}$ - matrix to a matrix which has an operator norm of at most $c$, where $c>0$ is some constant smaller than $L_{\bar\sigma,z}^{-1}$. Note that such a $c$ exists considering the fact that $\check{Y}^{(1)}$ is bounded by $\|\bar{u}^{(1)}\|_\infty<L_{\bar\sigma,z}^{-1}$. To ensure the type of Lipschitz continuity needed for (MLLC) we can similarly use passive "inner cutoffs" for  $\check{Y}^{(i)}$, $i=2,\ldots,k$, as well, which is possible due to boundedness of $u^{(i)}$, $i=2,\ldots,k$. \\
The above reduction to a more standard setting directly implies uniqueness, strong regularity, continuity and the property of $\left(u^{(0)},\ldots,u^{(k)}\right)$ to be deterministic and controlled in $z$. This is because all these properties are satisfied by $\left(\bar{u}^{(0)},\ldots,\bar{u}^{(k)}\right)$ (see Theorems \ref{UNIqMREGulM}, \ref{GLObalexistM}) and transfer to $\left(u^{(0)},\ldots,u^{(k)}\right)$.

However, the above transformation can also be used to construct weakly regular Markovian $k$-decoupling fields on small intervals in the first place: Let $t\in J^{k}_{\max}$. Our goal is to construct a weakly regular Markovian $k$-decoupling field on a small interval $[t_1,t]$ using $\left(u^{(0)},\ldots,u^{(k)}\right)(t,\cdot)$ as the terminal condition, thereby extending a given weakly regular Markovian $k$-decoupling field to the left. Now choose $\lambda>0$ such that $1-L_{\left(\bar{u}^{(0)},\ldots,\bar{u}^{(k)}\right)(t,\cdot),x}L_{\bar\sigma,z}\geq \frac{1}{2}(1-L_{u^{(0)}(t,\cdot),x}L_{\sigma,z})$.
Also choose an "inner cutoff" for $\check{Y}^{(1)}$ in \eqref{genYcheck} such that the resulting $c>0$ is between $\|\bar{u}^{(1)}(t,\cdot)\|_\infty$ and $L_{\bar\sigma,z}^{-1}$, e.g. $c=\min\{2\|\bar{u}^{(1)}(t,\cdot)\|_\infty,\frac{1}{2}(\|\bar{u}^{(1)}(t,\cdot)\|+L_{\bar\sigma,z}^{-1})\}$. Also choose "inner cutoffs" for $\check{Y}^{(i)}$, $i=2,\ldots,k$, such that a given cutoff is passive if and only if $|\check{Y}^{(i)}|\leq c_i$ with some $c_i\geq \|\bar{u}^{(i)}(t,\cdot)\|_\infty$, which will be specified later.
This results in a higher-dimensional (MLLC) problem, obtained through manipulation of the generators in \eqref{genYcheck}. We can apply Theorem 4.2.17.\ and Remark 4.2.18.\ of \cite{Fromm2015} to it obtaining a weakly regular Markovian decoupling field $\left(\bar{u}^{(0)},\ldots,\bar{u}^{(k)}\right)$ on a small interval $[t_1,t]$. We denote by $\check{Y}^{(i)},\check{Z}^{(i)}$ the solution processes associated with this manipulated problem.\\
Observe that $\check{Z}^{(j)}$, $j=0,\ldots,k$, which are bounded since the Markovian decoupling field is controlled in $z$, can in fact be bounded explicitly using Lemma 2.5.14.\ of \cite{Fromm2015} (or a statement from its proof) by
$$ \left(L_{\left(\bar{u}^{(0)},\ldots,\bar{u}^{(k)}\right),x}\right)\cdot \left\|\bar\sigma\left(\cdot,X,\check{Y}^{(0)},\check{Z}^{(0)}\right)\right\|_\infty,$$ 
where $\sigma(\cdot,X,\check{Y}^{(0)},\check{Z}^{(0)})$ is uniformly bounded since
$$ \|\check{Z}^{(0)}\|_\infty\leq L_{\bar{u}^{(0)},x}\|\bar\sigma(\cdot,\cdot,\cdot,0)\|_\infty\left(1-L_{u^{(0)},x}L_{\sigma,z}\right)^{-1}<\infty, $$
again using Lemma 2.5.14.\ of \cite{Fromm2015} and the fact that (MLLC) is satisfied.\\
Now note that for $i\geq 2$ the backward dynamics of $\check{Y}^{(i)}$ is Lipschitz continuous in $\check{Y}^{(i)},\check{Z}^{(i)}$ with the Lipschitz constant and the offset being uniformly bounded depending on $c,c_2,\ldots,c_{i-1}$ (or just $c$, if $i=2$) and the Lipschitz constants of $\mu,\sigma,f$ and of their derivatives. Since $\|\bar{u}^{(i)}(t,\cdot)\|_\infty<\infty$, $\check{Y}^{(i)}$ remains uniformly bounded depending on $\|\bar{u}^{(i)}(t,\cdot)\|_\infty$ and the aforementioned constants, but independently of its own cutoff $c_i$. This means that we can choose $c_i$ such that the resulting cutoff for $\check{Y}^{(i)}$ is passive and the choice is made in advance depending only on the chosen $c,c_2,\ldots,c_{i-1}$ (which must be chosen before that), the value $\|\bar{u}^{(i)}(t,\cdot)\|_\infty$ and the Lipschitz constants of $\mu,\sigma,f$ and of their derivatives. Ones all constants $c,c_2,\ldots,c_{k}$ are chosen we obtain a weakly regular Markovian decoupling field to the associated (MLLC) problem on some small interval, such that, for $i\geq 2$, the cutoff for $\check{Y}^{(i)}$ is passive. In order to make sure that the cutoff for $\check{Y}^{(1)}$ (via the constant $c>0$) is passive as well, we possibly need to make the interval $[t_1,t]$ somewhat smaller: Using the backward dynamics of $\check{Y}^{(1)}$ this can be estimated based on the bound for $\check{Z}^{(j)}$, $j=0,1$, the constant $c$, the Lipschitz constants for $\mu,\sigma,f$ and size of the terminal value $\|\bar{u}^{(1)}(t,\cdot)\|_\infty<L_{\bar\sigma,z}^{-1}$.  \\
Now due to passiveness of all cuttoffs the resulting Markovian decoupling field for the (MLLC) problem considered above can be directly transformed into a Markovian $k$-decoupling field $\left(u^{(0)},\ldots,u^{(k)}\right)$ on $[t_1,t]$ for the initial ($k$-MLLC) problem. \\
The maximal size of a small interval on which the above construction works can be bounded away from $0$ depending on the values mentioned in Remark 4.2.18.\ of \cite{Fromm2015} (where $\xi$ is to be replaced by $\left(\bar{u}^{(0)},\ldots,\bar{u}^{(k)}\right)(t,\cdot)$ etc.). Essentially, in order to bound the size of the small interval away from zero we merely need a uniform control on $\|u^{(i)}(t,\cdot)\|_\infty+L_{u^{(i)}(t,\cdot),x}$, $i=1,\ldots,k$, and also control $\|u^{(1)}(t,\cdot)\|_\infty$ and $L_{u^{(0)}(t,\cdot),x}$ away from $L_{\sigma,z}^{-1}$.

The above construction rules out the possibility that $J^{k}_{\max}$ is a compact interval different from $[0,T]$. It also rules out the possibility of $J^{k}_{\max}=(s^{k}_{\min},T]$ if $$\sup_{t\in J^{k}_{\max}} \max\left\{\|u^{(1)}(t,\cdot)\|_\infty,L_{u^{(0)}(t,\cdot),x}\right\}<L_{\sigma,z}^{-1}$$
 and 
$$\sup_{t\in J^{k}_{\max}} L_{u^{(i)}(t,\cdot),x}+\|u^{(i)}(t,\cdot)\|_\infty<\infty$$ for all $i=1,\ldots,k$, as in this case we could choose $t$ such that the associated weakly regular Markovian $k$-decoupling field on $[t,T]$ is extended to the left beyond $s^{k}_{\min}$ using the above construction.
\end{proof}

The following key result establishes a connection between the different components of the Markovian $k$-decoupling field $\left(u^{(0)},\ldots,u^{(k)}\right)$.

\begin{thm}\label{roles} 
Assume that $(\xi,(\mu,\sigma,f))$ satisfies ($k$-MLLC) for some $k\in\mathbb{N}_1$.\\
Let $\left(u^{(0)},\ldots,u^{(k)}\right)$ be a weakly regular Markovian $k$-decoupling field for $(\xi,(\mu,\sigma,f))$ on an interval $[t_0,T]$. Then $u:=u^{(0)}$ is a weakly regular (standard) Markovian decoupling field for $(\xi,(\mu,\sigma,f))$ and $u^{(i)}$ is the $i$-th derivative of $u$ w.r.t.\ $x$, for $i=1,\ldots,k$.
\end{thm}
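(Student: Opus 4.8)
The plan is to dispatch the first assertion by inspection and the second by induction on the derivative order, the induction step being driven by exactly the differentiation-and-product-rule computation rehearsed heuristically in Section \ref{higherD}. For the first assertion, observe that the triplet $(X,\check{Y}^{(0)},\check{Z}^{(0)})$ from the definition of the Markovian $k$-decoupling field satisfies the forward equation, the backward equation with generator $\varphi^{(0)}=f$, and the decoupling condition $\check{Y}^{(0)}_s=u^{(0)}(s,X_s)$, with $\check{Z}^{(0)}$ essentially bounded by the Markovian requirement; hence $u:=u^{(0)}$ is a Markovian decoupling field in the standard sense, and its weak regularity ($L_{u^{(0)},x}<L_{\sigma,z}^{-1}$ and $\sup_s\|u^{(0)}(s,0)\|_\infty<\infty$) is already part of the weak regularity of the $k$-decoupling field. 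By Theorem \ref{1-global} the field is strongly regular and controlled in $z$, so all processes $X,\check{Y}^{(i)},\check{Z}^{(i)}$ are weakly differentiable in the initial value with the square-integrable bounds \eqref{strong_regul2}, and all $\check{Z}^{(i)}$ are bounded. These facts are the standing input for what follows.

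\textbf{The inductive mechanism.} For the second assertion I would induct on $k$, the case $k=0$ being vacuous. By restriction $(u^{(0)},\ldots,u^{(k-1)})$ is again a weakly regular Markovian $(k-1)$-decoupling field, since the generators $\varphi^{(i)}$ with $i\le k-1$ do not involve $u^{(k)}$; the induction hypothesis thus gives $u^{(i)}=\partial_x^i u^{(0)}$ for $i\le k-1$, and it remains to establish the top identity $u^{(k)}=\partial_x u^{(k-1)}$. Fix a constant initial value $x$ for which strong regularity and the weak chain rule hold. Differentiating the decoupling relation $\check{Y}^{(k-1)}_s=u^{(k-1)}(s,X_s)$ in $x$, using that $u^{(k-1)}$ is Lipschitz, the chain rule (exactly as in the derivation of \eqref{chain}) gives, on the set where $\partial_x X_s$ is invertible,
\[
\partial_x u^{(k-1)}(s,X_s)=\partial_x \check{Y}^{(k-1)}_s\,(\partial_x X_s)^{-1}=:W_s .
\]
Differentiating the $\check{Y}^{(k-1)}$-dynamics in $x$ and applying the product rule to $W\,\partial_x X=\partial_x\check{Y}^{(k-1)}$ reproduces the computation of Section \ref{higherD}: substituting the inductive identifications $\partial_x\check{Y}^{(i)}=\check{Y}^{(i+1)}\partial_x X$ and $\partial_x\check{Z}^{(i)}=h^{(i+1)}\partial_x X$ for $i\le k-2$ turns the matched coefficients into the recursion \eqref{phirecur}, so that $W$ solves the $\varphi^{(k)}$-backward equation \eqref{genYcheck} with terminal value $\xi^{(k)}(X_T)$. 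Since $\check{Y}^{(k)}$ solves the same equation with the same terminal value, a uniqueness argument forces $W=\check{Y}^{(k)}$, i.e.\ $\partial_x u^{(k-1)}(s,X_s)=u^{(k)}(s,X_s)$; as this holds along the forward flow from almost every initial value and both sides are (Lipschitz, hence) continuous in $x$, we obtain $u^{(k)}=\partial_x u^{(k-1)}=\partial_x^k u^{(0)}$.

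\textbf{Invertibility and the linear case $k\ge 2$.} Two technical points support the above. First, $\partial_x X$ solves the linear matrix SDE \eqref{dyn xx}, whose coefficients are expressed through the bounded $\check{Y}^{(1)},\check{Z}^{(0)}$ and through $W$ itself; as the solution of a linear matrix SDE it is a.s.\ invertible throughout $[t_1,T]$, so $W$ is a genuine It\^o process there, and square-integrability of its control follows from that of $\partial_x\check{Z}^{(k-1)}$ together with boundedness of $(\partial_x X)^{-1}$ on the approximating stochastic intervals, exactly as at the end of the proof of Theorem \ref{derivdym}. Second, for $k\ge 2$ the generator $\varphi^{(k)}$ is affine linear in $(\check{y}^{(k)},\check{z}^{(k)})$ with coefficients that are bounded along the solution by Lemma \ref{phikregul} and the controlled-in-$z$ property; the two solutions of this \emph{linear} BSDE with equal terminal data then coincide by the standard $L^2$ uniqueness, so this part of the induction is routine.

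\textbf{The base case $k=1$, the main obstacle.} The genuine difficulty is $k=1$, where $\varphi^{(1)}=\varphi$ is nonlinear — quadratic in $\check{z}^{(1)}$ and non-polynomial in $\check{y}^{(1)}$ — and, unlike in Theorem \ref{derivdym}, no assumption $L_{\sigma,z}=0$ or $n=m=1$ is available. Here I would run the rescaling $\bar u^{(i)}(t,x)=\lambda^{-i}u^{(i)}(t,\lambda^{-1}x)$ together with the passive inner cutoff of $\check{Y}^{(1)}$ from the proof of Theorem \ref{1-global}: for $\lambda$ large the cutoff keeps $\mathrm{Id}_{m\times d}-\check{y}^{(1)}\sigma_z$ boundedly invertible, so that the joint system \eqref{genYcheck} becomes a bona fide (MLLC) problem, its generator being Lipschitz in $\check{y}^{(1)}$ after the cutoff and Lipschitz in $\check{z}^{(1)}$ on bounded sets. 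The differentiation argument then exhibits $\partial_x\bar u^{(0)}$ as the second component of a weakly regular Markovian decoupling field for this joint (MLLC) problem: the candidate $W=u^{(0)}_x(\cdot,X)$ is bounded by $L_{u^{(0)},x}<L_{\sigma,z}^{-1}$, its control is $\mathrm{BMO}$ by Theorem \ref{CONtrollM}, and the cutoff is passive for it and for $\check{Y}^{(1)}$ since both stay below $L_{\sigma,z}^{-1}$. The uniqueness of weakly regular Markovian decoupling fields for the joint (MLLC) problem (Theorem \ref{UNIqMREGulM}) then yields $W=\check{Y}^{(1)}$, and undoing the rescaling gives $u^{(1)}=\partial_x u^{(0)}$. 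Reconciling this nonlinear derivative BSDE with the MLLC uniqueness machinery is where I expect the real work to lie.
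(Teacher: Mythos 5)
Your proposal runs the identification in the opposite direction from the paper, and that reversal is where the gaps open up. You form $W_s=\partial_x\check{Y}^{(k-1)}_s(\partial_x X_s)^{-1}$ and want it to solve the $\varphi^{(k)}$-equation; this needs $\partial_x X$ to be a.s.\ invertible on all of $[t_1,T]$, which you justify by calling $\partial_x X$ the solution of a linear matrix SDE. But \eqref{dyn xx} is not a closed linear SDE in $\partial_x X$ alone: its coefficients involve $\partial_x Y$ and $\partial_x Z$, which become multiples of $\partial_x X$ only after the very identifications $\partial_x Y=V\partial_x X$, $\partial_x Z=h\partial_x X$ you are in the middle of proving. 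Establishing this invertibility (the statement $\tau=T$ in the proof of Theorem \ref{derivdym}) is exactly the step that forces the hypotheses $L_{\sigma,z}=0$ or $n=m=1$ there, and neither is available here. The same circularity infects your base case $k=1$: to invoke uniqueness of weakly regular Markovian decoupling fields (Theorem \ref{UNIqMREGulM}) for the joint cut-off (MLLC) system you must first exhibit $\bigl(u^{(0)},u^{(0)}_x\bigr)$ as such a field, i.e.\ produce for \emph{every} initial condition a bounded control for the $\varphi^{(1)}$-equation solved by $u^{(0)}_x(\cdot,X)$; your appeal to Theorem \ref{CONtrollM} for that boundedness presupposes the decoupling-field property you are trying to establish.

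The paper breaks this circle by reversing the comparison: it defines $\bar{X}$ \emph{explicitly} as the stochastic exponential with coefficients $\alpha,\beta^{(j)}$ built from the known bounded processes $\check{Y}^{(1)},\check{Z}^{(1)},\check{Z}^{(0)}$ of the $k$-decoupling field, sets $\bar{Y}^{(i)}:=\check{Y}^{(i)}\bar{X}$ and $\bar{Z}^{(i,j)}:=\check{Z}^{(i,j)}\bar{X}+\check{Y}^{(i)}\beta^{(j)}\bar{X}$, and checks by It\^o's formula that $(\bar{X},\bar{Y}^{(1)},\bar{Z}^{(1)})$ solves the \emph{linear} FBSDE \eqref{dyn xx}, \eqref{dyn yx} satisfied by $(\partial_x X,\partial_x Y,\partial_x Z)$. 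Uniqueness is then needed only for a linear FBSDE with bounded coefficients and terminal coupling $\|\xi'\|_\infty\|\sigma_z\|_\infty\leq L_{u,x}L_{\sigma,z}<1$, valid on sufficiently small intervals; invertibility of $\partial_x X$ comes out as a by-product ($\partial_x X=\bar X$ is an exponential) and the nonlinearity of $\varphi^{(1)}$ never enters. The higher levels iterate the same multiplication trick, and the restriction to small intervals is removed by a finite small-interval induction that resets the terminal condition to $u^{(0)}(t_1,\cdot)$ --- a step your argument also omits, though for the affine levels $k\geq 2$ global $L^2$ uniqueness would cover it. Your structural observations (first assertion by inspection, affinity of $\varphi^{(k)}$ for $k\geq 2$, the base case being the real obstacle) are all correct, but the proposed resolution of that obstacle does not close.
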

\begin{proof}
Clearly, due to the definition of a weakly regular Markovian $k$-decoupling field, $u$ satisfies the requirements for a weakly regular Markovian decoupling field. Now let $t_1\in [t_0,T]$ be arbitrary. We would like to show that $u^{(i+1)}(t_1,x)=u^{(i)}_x(t_1,x)$ for almost all $x\in\mathbb{R}^n$ for each $i=0,\ldots,k-1$. To this end set $t_2=T$ and consider the processes $X,\check{Y}^{(0)},\check{Z}^{(0)},\ldots,\check{Y}^{(k)},\check{Z}^{(k)}$  associated with the initial condition $X_{t_1}=x\in\mathbb{R}^n$. 
We use notations from Theorem \ref{derivdym} with $Y:=\check{Y}^{(0)}$, $Z:=\check{Z}^{(0)}$, $V:=\check{Y}^{(1)}$, $\tilde{Z}:=\check{Z}^{(1)}$.
Due to strong regularity equations \eqref{dyn xx} and \eqref{dyn yx} are satisfied, while $\partial_x \check{Y}^{(i)}_{t_1}=u^{(i)}_x(t_1,x)$. At the same time we can construct processes $\bar{X}$, $\bar{Y}^{(i)}$ via
$$ \bar{X}_{t}:=\exp\left(\int_{t_1}^{t}\alpha_s ds+ \sum_{j=1}^d\int_{t_1}^{t} \beta^{(j)}_s\mathrm{d}W^{(j)}_s-\sum_{j=1}^d \int_{t_1}^{t} \left(\beta^{(j)}_s\right)^2  ds\right), $$
$$ \bar{Y}^{(i)}_t:=\check{Y}^{(i)}_{t}\bar{X}_{t}\quad\textrm{and}\quad \bar{Z}^{(i,j)}_t:=\check{Z}^{(i,j)}_{t}\bar{X}_{t}+\check{Y}^{(i)}_{t}\beta^{(j)}_t\bar{X}_{t}, \quad\textrm{for }i=1,\ldots,k, $$
where $\alpha_s:=\mu_{x,s} + \mu_{y,s}V_s +\mu_{z,s}h(s,V_s,\tilde{Z}_s)$, $\beta^{(j)}_s:=\sigma^{(j)}_{x,s} + \sigma^{(j)}_{y,s}V_s +\sigma^{(j)}_{z,s}h(s,V_s,\tilde{Z}_s)$. \\
Note that $\bar{X}$ is square-integrable since $V$, $\tilde{Z}$, $Z$ and, therefore, $\alpha$ and $\beta$ are bounded. More precisely, it is integrable w.r.t.\ any positive power. This implies that $\bar{Y}^{(i)}_t$ and $\bar{Z}^{(i)}_t$ are also integrable w.r.t.\ any power, since $\check{Z}^{(i)}$ and $\check{Y}^{(i)}$ are bounded for $i\geq 1$. \\
Now a straightforward application of the It\^o formula yields that $\bar{X}$, $\bar{Y}^{(1)}$, $\bar{Z}^{(1)}$ satisfy the same linear FBSDE as $\partial_x X$, $\partial_x Y$, $\partial_x Z$ do in \eqref{dyn xx} and \eqref{dyn yx}. Note that the coefficients of this linear FBSDE are bounded, the initial condition is $\mathrm{Id}_n$ and the terminal condition is $\partial_x Y_T = \xi'(X_T) \partial_x X_T$ or $\check{Y}^{(1)}_T = \xi'(X_T) \check{X}_T$ respectively. Such linear FBSDE have unique square-integrable solutions assuming the interval $[t_1,T]$ is small enough with the maximal size of the interval depending on the size of the linear coefficients (which are bounded independently of $t_1$) and the value $\|\xi'\|_\infty\|\sigma_{z}\|_\infty\leq L_{u,x}L_{\sigma,z}<1$. 
This implies that $\bar{X}_t=\partial_x X_t$ and $\bar{Y}_t=\partial_x Y_t$ a.s. for all $t\in[t_1,T]$ if $t_1$ is sufficiently close to $T$. In particular, $\bar{Y}^{(1)}_{t_1}=\partial_x Y_{t_1}$ or $V_{t_1}=u^{(0)}_x(t_1,x)$ which means $u^{(1)}(t_1,x)=u^{(0)}_x(t_1,x)$ for almost all $x$. \\
Now note that $\bar{Y}^{(2)},\bar{Z}^{(2)}$ satisfy the same BSDE that is satisfied by $\partial_x \check{Y}^{(1)},\partial_x \check{Z}^{(1)}$. This is because the dynamics of $\bar{Y}^{(2)}$ are implied by those of $\check{Y}^{(2)}$ and $\partial_x X$ and the generator $\varphi^{(k)}$ in \eqref{genYcheck} is chosen precisely in such a way to make sure that the resulting dynamic is as in the BSDE satisfied by $\partial_x \check{Y}^{(1)}$ assuming that the equations $\partial_x \check{Y}^{(0)}=\check{Y}^{(1)}\partial_x X$ and $\partial_x \check{Z}^{(0,j)}=\check{Z}^{(1,j)}\partial_x X+\check{Y}^{(1)}\beta^{(j)} \partial_x X$ are already known. Thus, similar to the above we obtain that $\bar{Y}^{(2)}_{t_1}=\partial_x \check{Y}^{(1)}_{t_1}$ or $u^{(2)}(t_1,x)=u^{(1)}_x(t_1,x)$ for almost all $x$ if $t_1$ is chosen sufficiently close to $T$.

By repeating essentially the same argument finitely many times we obtain that $u^{(i+1)}(t_1,\cdot)=u^{(i)}_x(t_1,\cdot)$ a.e. for all $i=0,\ldots,k-1$ if $[t_1,T]$ is sufficiently small.

We now choose $[t_1,T]$ as large as possible for the above argument to work and set $\tilde{\xi}:=u^{(0)}(t_1,\cdot)$ as a new terminal condition. Note that $u^{(i)}(t_1,\cdot)$ is the $i$ - th derivative of $\tilde{\xi}$ and we are back to the previous setting. This allows us to extend the interval on which $u^{(i+1)}$ and $u^{(i)}_x$ coincide a bit more to the left. After finitely many applications of the same argument we obtain $u^{(i+1)}=u^{(i)}_x$ a.e., $i=0,\ldots,k-1$, on the whole $[t_0,T]$.
\end{proof}

Conversely, we can show:

\begin{propo}\label{roles2} 
Assume that $(\xi,(\mu,\sigma,f))$ satisfies ($k$-MLLC) for some $k\in\mathbb{N}_1$. Let $u$ be a weakly regular (standard) Markovian decoupling field for $(\xi,(\mu,\sigma,f))$ on an interval $[t,T]$, such that $u$ is $k+1$ times weakly differentiable w.r.t.\ $x$ with bounded derivatives and let $u^{(i)}$, $i=0,\ldots,k$, be the version of the $i$ - th spatial derivative which is continuous in $x$. Assume also that $\sup_{s\in[t,T]} \|u^{(1)}(s,\cdot)\|_\infty<L_{\sigma,z}^{-1}$. \\
Then $\left(u^{(0)},\ldots,u^{(k)}\right)$ is a weakly regular Markovian $k$-decoupling field for $(\xi,(\mu,\sigma,f))$ on $[t,T]$. 
\end{propo}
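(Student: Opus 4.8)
The plan is to avoid re-deriving the backward dynamics of the derivative processes by hand and instead reduce the statement to the existence, uniqueness and regularity theory already in place, namely Theorem \ref{1-global}, Theorem \ref{roles} and the uniqueness of standard Markovian decoupling fields (Theorem \ref{UNIqMREGulM}). The direct route would require showing that each $u^{(i)}(s,X_s)$ is an It\^o process solving \eqref{genYcheck}; in the regime $L_{\sigma,z}>0$ with $n,m>1$ this is governed by multidimensional quadratic BSDEs and is exactly the difficulty one wants to sidestep.

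First I would record the cheap part, namely that $(u^{(0)},\ldots,u^{(k)})$ meets the weak-regularity bounds. Since $u^{(0)}=u$ is a weakly regular standard Markovian decoupling field we have $L_{u^{(0)},x}<L_{\sigma,z}^{-1}$ and $\sup_{s}\|u^{(0)}(s,0)\|_\infty<\infty$. The hypothesis $\sup_{s\in[t,T]}\|u^{(1)}(s,\cdot)\|_\infty<L_{\sigma,z}^{-1}$ gives $\|u^{(1)}\|_\infty<L_{\sigma,z}^{-1}$, and because $u$ is $(k+1)$-times weakly differentiable with bounded derivatives we obtain $\|u^{(i)}\|_\infty<\infty$ for $i=1,\ldots,k$ together with $L_{u^{(i)},x}=\|u^{(i+1)}\|_\infty<\infty$ for the same range of $i$. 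Differentiating $u^{(0)}(T,\cdot)=\xi$ also yields the terminal identities $u^{(i)}(T,\cdot)=\xi^{(i)}$, which gives the required terminal condition and simultaneously shows that the terminal data $(\xi,\xi',\ldots,\xi^{(k)})$ is bounded with $\|\xi'\|_\infty<L_{\sigma,z}^{-1}$.

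The strategy is then as follows. By Theorem \ref{1-global} there is a unique weakly regular Markovian $k$-decoupling field $(\tilde u^{(0)},\ldots,\tilde u^{(k)})$ on $J^{k}_{\max}$, and by Theorem \ref{roles} its zeroth component $\tilde u^{(0)}$ is a weakly regular standard Markovian decoupling field whose higher components are its spatial derivatives, $\tilde u^{(i)}=(\tilde u^{(0)})^{(i)}$. On every compact subinterval $[t',T]$ of $J^{k}_{\max}\cap[t,T]$ both $\tilde u^{(0)}$ and $u$ are weakly regular standard Markovian decoupling fields for the same problem, so Theorem \ref{UNIqMREGulM} forces $\tilde u^{(0)}=u$ there; letting $t'$ vary gives $\tilde u^{(0)}=u$, and hence $\tilde u^{(i)}=u^{(i)}$, on all of $J^{k}_{\max}\cap[t,T]$.

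The crux, and the step I expect to be the main obstacle, is to show $[t,T]\subseteq J^{k}_{\max}$, which I would establish by contradiction using the blow-up dichotomy of Theorem \ref{1-global}. If $J^{k}_{\max}=(s^{k}_{\min},T]$ with $s^{k}_{\min}\geq t$, then $J^{k}_{\max}\subseteq[t,T]$, so the identification $\tilde u^{(i)}=u^{(i)}$ holds on all of $J^{k}_{\max}$, and I would evaluate the four quantities appearing in the dichotomy: $\sup_{s}L_{\tilde u^{(0)}(s,\cdot),x}=\sup_{s}\|u^{(1)}(s,\cdot)\|_\infty<L_{\sigma,z}^{-1}$, likewise $\sup_{s}\|\tilde u^{(1)}(s,\cdot)\|_\infty<L_{\sigma,z}^{-1}$, while $\sup_{s}L_{\tilde u^{(i)}(s,\cdot),x}=\sup_{s}\|u^{(i+1)}(s,\cdot)\|_\infty<\infty$ for $i=1,\ldots,k$ and $\sup_{s}\|\tilde u^{(j)}(s,\cdot)\|_\infty<\infty$ for $j=2,\ldots,k$. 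By the a priori bounds on $u$ none of these can attain its critical value, contradicting Theorem \ref{1-global}. Hence $s^{k}_{\min}<t$ and $[t,T]\subseteq J^{k}_{\max}$; restricting the unique field to $[t,T]$ and using $\tilde u^{(i)}=u^{(i)}$ identifies $(u^{(0)},\ldots,u^{(k)})$ with a weakly regular Markovian $k$-decoupling field, which is the assertion. The essential point is that the uniqueness of the zeroth component transfers the a priori bounds assumed for $u$ onto all components of the abstractly existing field, thereby excluding the explosion alternative; the non-degeneracy of $J^{k}_{\max}$ near $T$ needed to apply Theorem \ref{1-global} is guaranteed by the boundedness of the terminal data noted above.
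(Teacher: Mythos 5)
Your proposal is correct and follows essentially the same route as the paper's own proof: identify the unique weakly regular Markovian $k$-decoupling field on $J^{k}_{\max}$ with $(u^{(0)},\ldots,u^{(k)})$ via Theorem \ref{roles} and uniqueness of the standard Markovian decoupling field, then rule out $J^{k}_{\max}\subseteq[t,T]$ by noting that the blow-up alternatives in Theorem \ref{1-global} contradict the assumed bounds on $u$ and its derivatives. Your write-up is somewhat more explicit than the paper's (checking the weak-regularity bounds and the non-degeneracy of $J^{k}_{\max}$ near $T$), but the underlying argument is the same.
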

\begin{proof}
 Consider the interval $I:=[t,T]\cap J^{k}_{\max}$. On $I$ we have a weakly regular Markovian $k$-decoupling field the first component of which must be $u$ and the others the spatial derivatives of $u$ up to order $k$ (Theorem \ref{roles}, Theorem \ref{GLObalexistM}). This already shows that $\left(u^{(0)},\ldots,u^{(k)}\right)$ restricted to $I$ is a weakly regular Markovian $k$-decoupling field. According to Theorem \ref{1-global} the interval $I$ is either equal to $[t,T]$ or is open to the left coinciding with $J^{k}_{\max}$. It remains to rule out the second option: In that case either $|u^{(1)}|$ would not be bounded away from $L_{\sigma,z}^{-1}$ or the spatial derivatives $u^{(1)},\ldots,u^{(k)}$ would not be uniformly Lipschitz continuous (Theorem \ref{1-global}) in contradiction to our assumptions.
\end{proof}

The next result (Theorem \ref{tightExp}) shows that the necessary condition for a singularity as formulated in Theorem \ref{1-global} can be significantly simplified. 

\begin{thm}\label{tightExp}
Assume that $(\xi,(\mu,\sigma,f))$ satisfies ($k$-MLLC) for some $k\geq 0$. Let $\left(u^{(0)},\ldots,u^{(k)}\right)$ be the unique weakly regular Markovian $k$-decoupling field on $J^{k}_{\max}$. For $i\in\{1,2\}$ we say that condition (E\,$i$) is satisfied if $k\geq i$ and $\sup_{t\in J^{k}_{\max}} L_{u^{(i)}(t,\cdot),x}=\infty$. Also, we say that (E0) is satisfied if $\sup_{t\in J^{k}_{\max}} L_{u^{(0)}(t,\cdot),x}=L_{\sigma,z}^{-1}$. \\
Assume that  $J^{k}_{\max}=(s^{k}_{\min},T]$ with some $s^{k}_{\min}\in[0,T)$. Then (E0), (E1) or (E2) must be satisfied. 
If, in addition, $L_{\sigma,z}=0$ then (E0) or (E1) must already be satisfied.
\end{thm}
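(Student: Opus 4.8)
The plan is to deduce Theorem~\ref{tightExp} from Theorem~\ref{1-global}, by showing that the four blow-up alternatives listed there collapse onto the three conditions (E0), (E1), (E2) --- and, when $L_{\sigma,z}=0$, onto (E0), (E1) only.

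I would first record two elementary identities. By Theorem~\ref{roles}, $u^{(i)}$ is the $i$-th spatial derivative of $u:=u^{(0)}$, so, taking essential suprema of operator norms, $\|u^{(1)}(t,\cdot)\|_\infty=L_{u^{(0)}(t,\cdot),x}$ exactly, while $\|u^{(j)}(t,\cdot)\|_\infty$ and $L_{u^{(j-1)}(t,\cdot),x}$ are comparable up to norm-equivalence constants and hence simultaneously finite or infinite. Therefore the alternative $\sup_t\|u^{(1)}\|_\infty=L_{\sigma,z}^{-1}$ of Theorem~\ref{1-global} coincides with (E0), and each alternative $\sup_t\|u^{(j)}\|_\infty=\infty$, $j\in\{2,\dots,k\}$, coincides with $\sup_t L_{u^{(j-1)},x}=\infty$, i.e.\ with the $(j-1)$-th member of the family $\{\sup_t L_{u^{(i)},x}=\infty\}_{i=1}^{k}$. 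Thus Theorem~\ref{1-global} already yields that $J^k_{\max}=(s^k_{\min},T]$ forces either (E0) or $\sup_t L_{u^{(i)}(t,\cdot),x}=\infty$ for some $i\in\{1,\dots,k\}$. As $i=1,2$ are precisely (E1), (E2), it remains to establish the a priori statement: \emph{if} (E0), (E1), (E2) all fail, \emph{then} $\sup_t L_{u^{(i)}(t,\cdot),x}<\infty$ for all $i\in\{3,\dots,k\}$ as well (and, when $L_{\sigma,z}=0$, already for all $i\ge 2$ assuming only that (E0), (E1) fail).

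For this a priori statement I would fix $t\in J^k_{\max}$, work on $[t,T]$, and bound everything uniformly in $t$. Failure of (E0) keeps $\check Y^{(1)}$ bounded away from $L_{\sigma,z}^{-1}$, so that $(\mathrm{Id}_{m\times d}-\check Y^{(1)}\sigma_{z})^{-1}$, and with it $h^{(1)}$ and the generator coefficients, stay bounded (Remark after Theorem~\ref{derivdym}); failure of (E1), (E2) together with controlledness in $z$ (Theorem~\ref{1-global}) keeps $\check Z^{(0)},\check Z^{(1)},\check Z^{(2)}$ bounded. I would then induct on $i$, bounding $M_i:=\sup_t\|u^{(i)}(t,\cdot)\|_\infty$ via the BSDE \eqref{genYcheck} for $\check Y^{(i)}$. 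By Lemma~\ref{phikregul} and Proposition~\ref{phiderivP} the generator $\varphi^{(i)}$ is affine in $(\check y^{(i)},\check z^{(i)})$ with the $\check z^{(i)}$-coefficient bounded (a function of $\theta_1$ only), and by Lemma~\ref{phideriv} it is affine in $\check z^{(i-1)}$ with a bounded coefficient and \emph{without} a $\check z^{(i-1)}\check z^{(i)}$ cross-term, for $i\ge 3$ (resp.\ $i\ge 2$ when $L_{\sigma,z}=0$); moreover, for $i\ge 1$, $\varphi^{(i)}$ depends on $\check Y^{(0)}$ only through bounded derivatives of $f,\mu,\sigma$, so no linear growth in $\check Y^{(0)}$ enters. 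Hence $\check Y^{(i)}$ solves a linear BSDE with driver $A\check Y^{(i)}+B\check Z^{(i)}+\Gamma\check Z^{(i-1)}+G$, where $A,B,\Gamma$ are bounded and $G$ is bounded in terms of $M_{i-1}$ and the data. Removing $B\check Z^{(i)}$ by a Girsanov change of measure (legitimate because $B$ is bounded) and estimating $\Gamma\check Z^{(i-1)}$ through the BMO-norm of $\check Z^{(i-1)}$ --- which is finite and governed by $M_{i-1}$, \emph{independently} of $L_{u^{(i-1)},x}$ --- a Gronwall argument on small subintervals gives $M_i<\infty$ uniformly. Since $L_{u^{(i)},x}=\|u^{(i+1)}\|_\infty=M_{i+1}$ for $i<k$, this bounds $L_{u^{(i)},x}$ for all $i\le k-1$; for the top index I would bound $L_{u^{(k)},x}$ directly, comparing solutions for two initial values $x,x'$, so that $\Delta\check Y^{(k)}$ solves the linearised BSDE with terminal datum $\lesssim|x-x'|$ and all lower-order inputs $\Delta X,\Delta\check Y^{(l)},\Delta\check Z^{(l)}$ ($l<k$) of size $\lesssim|x-x'|$ (the delicate term $\Gamma\,\Delta\check Z^{(k-1)}$ again absorbed via $\|\Delta\check Z^{(k-1)}\|_{\mathrm{BMO}}\lesssim|x-x'|$), whence $L_{u^{(k)},x}<\infty$.

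Combining the two steps: under failure of (E0), (E1), (E2) no member of the family $\{\sup_t L_{u^{(i)},x}=\infty\}_{i=1}^k$ can hold, so by the reduced necessary condition of Theorem~\ref{1-global} the interval $J^k_{\max}$ cannot be left-open; contrapositively, left-openness forces (E0), (E1) or (E2). The $L_{\sigma,z}=0$ refinement is identical except that there $\sigma_z\equiv 0$ simplifies the generators and makes Lemma~\ref{phideriv} available already at level $i=2$, so the induction starts one step earlier and propagates from the negation of (E0) and (E1) alone, giving $L_{u^{(i)},x}<\infty$ for all $i\ge 2$ and thus excluding (E2). The step I expect to be the main obstacle is exactly the a priori estimate, and inside it the handling of the lower-order control $\check Z^{(i-1)}$ appearing in the generator of $\check Y^{(i)}$: a priori this process is only square-integrable (its sup-norm is controlled by $L_{u^{(i-1)},x}$, which at the top index is what we are trying to bound), so the estimate must rely essentially on Lemma~\ref{phideriv} to guarantee that $\check Z^{(i-1)}$ enters only linearly, with a bounded coefficient and uncoupled from the control $\check Z^{(i)}$, so that it can be absorbed as a BMO forcing term rather than turning the equation into a genuinely quadratic BSDE.
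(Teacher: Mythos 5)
Your reduction to Theorem \ref{1-global} via the identities $L_{u^{(i)}(t,\cdot),x}=\|u^{(i+1)}(t,\cdot)\|_\infty$ from Theorem \ref{roles} is exactly the paper's first step, and your recognition that everything hinges on an a priori bound for the higher Lipschitz constants, with Lemma \ref{phideriv} used to prevent a $\check{z}^{(i-1)}\check{z}^{(i)}$ coupling, is the right key idea. Your route through the intermediate indices is, however, genuinely different from the paper's: you bound $M_i=\sup_t\|u^{(i)}(t,\cdot)\|_\infty$ directly from the BSDE \eqref{genYcheck} for $\check{Y}^{(i)}$, absorbing the lower-order control $\check{Z}^{(i-1)}$ as a BMO forcing term, whereas the paper proves only the single implication ``$L_{u^{(i)},x}<\infty$ for $i\le k-1$ implies $L_{u^{(k)},x}<\infty$'' and obtains the intermediate bounds by inducting over $k$ itself (the first $j+1$ components of the $k$-decoupling field form a $j$-decoupling field). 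Your variant buys a more economical use of the already-existing components at the price of extra BMO bookkeeping; the paper's buys uniformity of the argument at the price of always differentiating once more. One inaccuracy in your intermediate step: Lemma \ref{phideriv} does \emph{not} say that $\varphi^{(i)}$ is affine in $\check{z}^{(i-1)}$ (it only says $\varphi^{(i)}_{\check{z}^{(i-1)}}$ is independent of $\check{z}^{(i)}$), and affineness is in fact false for small $i$ --- see the explicit $\varphi^{(2)}$ in Section \ref{examp}, which is quadratic in $\check{z}^{(1)}$. What you actually need, and what does hold, is that the $\check{z}^{(i-1)}$-dependence has coefficients bounded in terms of the lower-order quantities and is uncoupled from $\check{z}^{(i)}$; since BMO controls $\mathbb{E}_\tau\int|\check{Z}^{(i-1)}|^2$, even a quadratic dependence would be absorbable, so this is reparable.

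The genuine gap is in your top-index step. You linearise the equation for $\check{Y}^{(k)}$ between two initial values and treat the resulting BSDE for $\Delta\check{Y}^{(k)}$ as one with bounded coefficients acting on inputs of size $\lesssim|x-x'|$. But the coefficients multiplying $\Delta X$, $\Delta\check{Y}^{(j)}$, $\Delta\check{Z}^{(j)}$ for $j\le k-1$ are the derivatives of $\varphi^{(k)}$ with respect to the lower-order variables, and these are only \emph{affine linear in} $|\check{Z}^{(k)}|$ --- which is itself of the order of $L_{u^{(k)},x}$, the very quantity you are estimating. So the forcing is not $\lesssim|x-x'|$ but $\lesssim(1+L_{u^{(k)},x})|x-x'|$, and a naive Gronwall does not close. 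This self-reference is precisely where the paper invests its effort: it passes to $\check{Y}^{(k+1)}=\partial_x\check{Y}^{(k)}(\partial_x X)^{-1}$, writes its BSDE with generator $\eta_s+\kappa_s\vec{Z}_s$ where $|\eta_s|\le C(1+\psi_s)$ with $\psi_s=\sup_{r\in[s,T]}\|u^{(k+1)}(r,\cdot)\|_{2,\infty}$ the running supremum of the unknown itself, applies It\^o to $|\vec{Y}|^2$ and Young's inequality to get $\psi^2_{t_1}\le\psi^2_T+\int_{t_1}^T C_3(1+\psi^2_s)\,\dx s$, and only then concludes by Gronwall that $\psi$ grows at most exponentially backwards in time. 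Your sketch needs this additional structural observation (linear growth of the coefficients in the unknown sup-norm) and the corresponding Gronwall inequality for the running supremum; without it the final estimate does not close, on small subintervals or otherwise.
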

\begin{proof} 
We only consider $k\geq 1$, since for $k=0$ we already have Theorem \ref{EXPlosionM}.
Assume that  $J^{k}_{\max}=(s^{k}_{\min},T]$ with some $s^{k}_{\min}\in[0,T)$.
Note that according to Theorem \ref{roles} we have $L_{u^{(i)}(t,\cdot),x}=\|u^{(i+1)}(t,\cdot)\|_\infty$ for all $i=0,\ldots,k-1$ and all $t\in J^{k}_{\max}$. 
Thus, for $k\leq 2$ and $L_{\sigma,z}>0$ the statement follows directly from Thereom \ref{1-global}. Similarly, there is nothing to prove for $k\leq 1$ even if $L_{\sigma,z}=0$.

Without any loss of generality we assume from now on that either $k\geq 3$ and $L_{\sigma,z}>0$ or that $k\geq 2$ and $L_{\sigma,z}=0$. For both of these two cases we conduct an indirect proof: Let us assume for the case $L_{\sigma,z}>0$ that $\sup_{t\in J^{k}_{\max}} L_{u^{(0)}(t,\cdot),x}<L_{\sigma,z}^{-1}$ and that $\sup_{t\in J^{k}_{\max}} L_{u^{(i)}(t,\cdot),x}<\infty$ for $i=1,2$. For the case $L_{\sigma,z}=0$, however, we assume from now on that $\sup_{t\in J^{k}_{\max}} L_{u^{(i)}(t,\cdot),x}<\infty$ for $i=0,1$. Under these assumptions we want to produce a contradiction to Thereom \ref{1-global}. This means that our objective is to conclude that $\sup_{t\in J^{k}_{\max}} L_{u^{(i)}(t,\cdot),x}<\infty$ for all $i=2,\ldots,k$. To this end it is actually sufficient to show that if $\sup_{t\in J^{k}_{\max}} L_{u^{(0)}(t,\cdot),x}<L_{\sigma,z}^{-1}$ and $\sup_{t\in J^{k}_{\max}} L_{u^{(i)}(t,\cdot),x}<\infty$ for $i=1,\ldots,k-1$, then $\sup_{t\in J^{k}_{\max}} L_{u^{(k)}(t,\cdot),x}<\infty$ as well, since this conclusion would allow an inductive argument over $k$. So, let us assume from now on that $\sup_{t\in J^{k}_{\max}} L_{u^{(0)}(t,\cdot),x}<L_{\sigma,z}^{-1}$ and $\sup_{t\in J^{k}_{\max}} L_{u^{(i)}(t,\cdot),x}<\infty$ for $i=1,\ldots,k-1$.

Let $X_{t_1}=x\in\mathbb{R}^n$ be the initial condition, where $t_1\in (s^{k}_{\min},T]$. Consider FBSDE \eqref{genYcheck}, where $i=0,\ldots,k$, $s\in[t_1,T]$, and where the processes $X,\check{Y}^{(i)},\check{Z}^{(i)}$ are chosen according to the definition of a Markovian $k$-decoupling field. Let us write $Y:=\check{Y}^{(0)}$, $Z:=\check{Z}^{(0)}$, $V:=\check{Y}^{(1)}$ and $\tilde{Z}:=\check{Z}^{(1)}$ for short.
Due to strong regularity (Theorem \ref{1-global}) we can differentiate the forward process $X$ and all processes $\check{Y}^{(i)},\check{Z}^{(i)}$ w.r.t.\ the initial value $x\in\mathbb{R}^n$ and obtain square-integrable processes $\partial_x X$, $\partial_x\check{Y}^{(i)}$, $\partial_x\check{Z}^{(i)}$. We now consider \eqref{genYcheck} for $i=k$:

Note that $\varphi^{(k)}$ is weakly differentiable w.r.t.\ $x$, $\check{y}^{(i)}$ and $\check{z}^{(i)}$, where $i=0,\ldots,k$, with derivatives that are bounded if $\check{z}^{(0)}$, $\check{y}^{(i)}$, $\check{z}^{(i)}$, $i=1,\ldots,k$, are (Lemma \ref{phikregul}). Moreover, notice that $\varphi^{(k)}$ is affine linear in $\check{y}^{(k)}$, $\check{z}^{(k)}$ (see \eqref{phirecur}). The derivatives of $\varphi^{(k)}$ w.r.t.\ $\check{z}^{(0)}$, $\check{y}^{(i)}$, $\check{z}^{(i)}$, $i=1,\ldots,k-1$ are also affine linear in $\check{y}^{(k)},\check{z}^{(k)}$ with coefficients that are bounded based on the bounds for $\check{z}^{(0)}$, $\check{y}^{(i)}$, $\check{z}^{(i)}$, for $i=1,\ldots,k-1$. In addition, the derivatives of $\varphi^{(k)}$ w.r.t.\ $\check{y}^{(k)}$ and $\check{z}^{(k)}$ are bounded based on the bounds for $\check{z}^{(0)}$, $\check{y}^{(i)}$, $\check{z}^{(i)}$, for $i=1,\ldots,k-1$.

Now observe that $\check{Y}^{(i)}$ is bounded by $\sup_{t\in J^{k}_{\max}} L_{u^{(i-1)}(t,\cdot),x}<\infty$ for all $i=1,\ldots,k$. Furthermore, using Lemma 2.5.14.\ in \cite{Fromm2015} (or a statement from its proof) we obtain that for every $i\in\{1,\ldots,k\}$ the process $\check{Z}^{(i)}$ is bounded by
$$ \left(\sup_{t\in [t_1,T]} L_{\left({u}^{(0)},\ldots,{u}^{(i)}\right)(t,\cdot),x}\right)\cdot \left\|\sigma\left(\cdot,X,Y,Z\right)\right\|_\infty,$$ 
where $\sigma\left(\cdot,X,Y,Z\right)$ is uniformly bounded since
$$ \|Z\|_\infty\leq L_{u^{(0)},x}\|\sigma(\cdot,\cdot,\cdot,0)\|_\infty\left(1-L_{u^{(0)},x}L_{\sigma,z}\right)^{-1}<\infty, $$
again using Lemma 2.5.14.\ of \cite{Fromm2015} and the fact that (MLLC) is satisfied.
We know that $\sup_{t\in [t_1,T]}L_{\left({u}^{(0)},\ldots,{u}^{(i)}\right)(t,\cdot),x}$ is bounded uniformly in $t_1$ for $i\leq k-1$, but not yet for $i=k$.
In any case, $\varphi^{(k)}$ is effectively Lipschitz continuous if we use an appropriate "inner cutoff" for the processes $\check{Z}^{(0)},\check{Y}^{(i)},\check{Z}^{(i)}$, $i=1,\ldots,k$. Among these the process $\check{Z}^{(k)}$ is now the only one for which a uniform bound is yet to be established. \\
Differentiating both sides of \eqref{genYcheck} for $i=k$ w.r.t.\ $x$ and using the chain rule of Lemma A.3.2. in \cite{Fromm2015} we obtain
$$
\partial_x\check{Y}^{(k)}_s  =\xi^{(k+1)}(X_T)\partial_x X_T-\sum_{j=1}^d\int_s^T\partial_x\check{Z}^{(k,j)}_r\mathrm{d} W^{(j)}_r $$
$$ -\int_s^T\Delta^{x}_r\partial_x X_r+\sum_{i=0}^k\left(\Delta^{\check{y}^{(i)}}_r \partial_x \check{Y}^{(i)}_r+\Delta^{\check{z}^{(i)}}_r\partial_x \check{Z}^{(i)}_r\right)\dx r, $$
where $\Delta^{x},\Delta^{\check{y}^{(i)}},\Delta^{\check{z}^{(i)}}$, $i=0,\ldots,k$, are some bounded progressively measurable processes, such that $\Delta^{x},\Delta^{\check{y}^{(i)}},\Delta^{\check{z}^{(i)}}$, for $i=0,\ldots,k-1$, are bounded by an expression, which is affine linear in $|\check{Z}^{(k)}|$ with the two coefficients being bounded. The bounds on these coefficients, as well as the bounds on $\Delta^{\check{y}^{(k)}}$, $\Delta^{\check{z}^{(k)}}$, depend on $\sup_{t\in J^{k}_{\max}} L_{u^{(i)}(t,\cdot),x}$ for $i=0,\ldots,k-1$ only. It is of particular interest that unlike some of the other processes the process $\Delta^{\check{z}^{(k-1)}}$ does not depend on $\check{Z}^{(k)}$ as a consequence of Lemma \ref{phideriv}.

According to the proof of Theorem \ref{roles} we can choose $[t_1,T]$ sufficiently small such that $\partial_x X$ is invertible everywhere, $\partial_x \check{Y}^{(i)}(\partial_x X)^{-1}$ is equal to $\check{Y}^{(i+1)}$ for all $i=0,\ldots,k-1$ and $\partial_x \check{Z}^{(i)}(\partial_x X)^{-1}$ is equal to $h^{(i+1)}(\cdot,X,\check{Y}^{(0)},\check{Z}^{(0)},\ldots,\check{Y}^{(i+1)},\check{Z}^{(i+1)})$, which we refer to as $r\mapsto h^{(i+1)}_r$ for short, for all $i=0,\ldots,k-1$. Furthermore, we can define $\check{Y}^{(k+1)}:=\partial_x\check{Y}^{(k)}(\partial_x X)^{-1}$. Clearly, $\check{Y}^{(k+1)}$ is an It\^o process since  $\partial_x\check{Y}^{(k)}$, $(\partial_x X)^{-1}$ are themselves It\^o processes. So we can write
$$
\check{Y}^{(k+1)}_s  =\xi^{(k+1)}(X_T)-\sum_{j=1}^d\int_s^T\check{Z}^{(k+1,j)}_r\mathrm{d} W^{(j)}_r -\int_s^T\gamma_r\dx r, $$
with progressively measurable processes $\check{Z}^{(k+1)}_r$, $\gamma_r$ that are defined by the above expression and are to be determined more explicitly. A straightforward application of the product rule yields that $\check{Z}^{(k+1,j)}$ is given by
$$\check{Z}^{(k+1,j)}_r=\partial_x  \check{Z}^{(k,j)}(\partial_x X_r)^{-1} - \check{Y}^{(k+1)}_r \left(\sigma^{(j)} _{x,r}+ \sigma^{(j)} _{y,r}V_r +\sigma^{(j)} _{z,r}h(r,V_r,\tilde{Z}_r)\right), $$
where we use the notations from Theorem \ref{derivdym}. The product rule is applied to $\check{Y}^{(k+1)}\cdot \partial_x X$. Furthermore, we obtain, again using the product rule and straightforward transformations, that
$$ \gamma_r =\Delta^{x}_r+\left(\sum_{i=0}^{k-2}\left(\Delta^{\check{y}^{(i)}}_r \check{Y}^{(i+1)}_r+\Delta^{\check{z}^{(i)}}_r h^{(i+1)}_r\right)\right)
+\Delta^{\check{y}^{(k-1)}}_r \check{Y}^{(k)}_r+\Delta^{\check{z}^{(k-1)}}_r h^{(k)}_r$$
$$+\Delta^{\check{y}^{(k)}}_r \check{Y}^{(k+1)}_r
+\Delta^{\check{z}^{(k)}}_r\left(\check{Z}^{(k+1)}_r+\check{Y}^{(k+1)}_r \left(\sigma _{x,r}+ \sigma _{y,r}V_r +\sigma_{z,r}h(r,V_r,\tilde{Z}_r)\right)\right)  $$
$$ -\check{Y}^{(k+1)}_r \left(\mu_{x,r} + \mu_{y,r}V_r +\mu_{z,r}h(r,V_r,\tilde{Z}_r)\right)-
\sum_{j=1}^d \check{Z}^{(k+1,j)} \left(\sigma^{(j)} _{x,r} + \sigma^{(j)} _{y,r} V +\sigma^{(j)} _{z,r} h(r,V_r,\tilde{Z}_r)\right). $$
Note that because of our assumptions the processes $\check{Z}^{(0)}$, $\check{Y}^{(i)}$, $\check{Z}^{(i)}$, $\check{Y}^{(k)}$, where $i=1,\ldots,k-1$, are uniformly bounded
and the processes $\check{Y}^{(k+1)}$, $\check{Z}^{(k+1)}$ satisfy a standard Lipschitz BSDE where the Lipschitz constant is bounded uniformly in $t_1$.
The offset process $\Delta^{x}_r+\left(\sum_{i=0}^{k-2}\left(\Delta^{\check{y}^{(i)}}_r \check{Y}^{(i+1)}_r+\Delta^{\check{z}^{(i)}}_r h^{(i+1)}_r\right)\right)
+\Delta^{\check{y}^{(k-1)}}_r \check{Y}^{(k)}_r+\Delta^{\check{z}^{(k-1)}}_r h^{(k)}_r$ of this BSDE, however, can only be bounded by some expression, which is affine linear in $|\check{Z}^{(k)}|$ with coefficients, which are uniformly controlled depending on some constant, which does not depend on $t_1$. Remember that $|\check{Z}^{(k)}|$ is controlled by $1+\sup_{t\in [t_1,T]} L_{u^{(k)}(t,\cdot),x}$ multiplied with some factor. In that it is of the same magnitude as $\|\check{Y}^{(k+1)}\|_\infty$, since $\check{Y}^{(k+1)}_{t}=u^{(k+1)}(t,X_t)$. This allows to obtain a control for $\check{Y}^{(k+1)}_{t_1}=u^{(k+1)}(t_1,x)$, $x\in\mathbb{R}^n$, and, thus, for $L_{u^{(k)}(t_1,\cdot),x}$ using Gronwall's Lemma. In other words, we claim that $\sup_{t\in [t_1,T]} L_{u^{(k)}(t,\cdot),x}$, as a function of $t_1$, grows at most exponentially in time (moving backwards) starting with $L_{\xi^{(k)},x}$ at time $T$: \\
 In order to conduct the above argument rigorously consider the process $\vec{Y}\in\mathbb{R}^{N}$, which is the process $\check{Y}^{(k+1)}$ written into a simple vector, and consider the associated control process $\vec{Z}\in\mathbb{R}^{N\times d}$ obtained from $\check{Z}^{(k+1)}$. Then $\vec{Y}$, $\vec{Z}$ satisfy a linear BSDE with a generator of the form $\eta_s +\kappa_s\vec{Z}_s$, such that there exists a constant $C$ independent of $t_1$ with $|\kappa_s|\leq C$ and $|\eta_s|\leq C(1+\psi_s)$, where $\psi_s:=\sup_{r\in [s,T]} \|u^{(k+1)}(r,\cdot)\|_{2,\infty}$, where $s\in[t_1,T]$ and where we take the essential supremum of the Frobenius norm of the generalized matrix $u^{(k+1)}(r,\cdot)$. Observe that the Euclidean norm of $\vec{Y}_s$ is bounded by $\psi_s$. Using the It\^o formula it is straightforward to deduce the dynamics of the real-valued process $|\vec{Y}|^2$, by which we mean the square of the Euclidean norm. This process satisfies a BSDE with a generator given by $\tdiv(|\vec{Y}|^2)_s=2\vec{Y}_s^\top\left(\eta_s +\kappa_s\vec{Z}_s\right)+|\vec{Z}_s|^2$ and some terminal condition bounded by $\psi^2_T$. Note that in $|\vec{Z}_s|^2$ we apply the Frobenius norm of $\vec{Z}_s$. Using Young's inequality we have $|2Y_s\kappa_s\vec{Z}_s|\leq C_2 \psi^2_s+ |\vec{Z}_s|^2$ with some constant $C_2$ which depends on $C$ only. Thus, by controlling the generator of the BSDE satisfied by $|\vec{Y}|^2$ we obtain
$$\left|u^{(k+1)}(t_1,x)\right|^2_2=|\vec{Y}_{t_1}|^2\leq \psi^2_T+\int_{t_1}^T C_3(1+\psi^2_s)\dx s, $$
with some constant $C_3$, which does not depend on $t_1$.
Since $x\in\mathbb{R}^n$ and $t_1\in (s^{k}_{\min},T]$ are arbitrary, we obtain
$$\psi^2_{t_1}\leq \psi^2_T+\int_{t_1}^T C_3(1+\psi^2_s)\dx s. $$
In other words, $1+\psi^2$ and, therefore, $\psi$ can grow at most exponentially in time (Gronwall's Lemma).
This yields a uniform bound on $L_{u^{(k)}(t,\cdot),x}$, which is independent of $t\in J^{k}_{\max}$. Thus, we achieve the desired contradiction to Theorem \ref{1-global}.
\end{proof}

Let us now formulate and prove three straightforward applications of Theorem \ref{tightExp}.

\begin{corollary} Assume that $(\xi,(\mu,\sigma,f))$ satisfies ($k$-MLLC), for a $k\geq 2$. Then $J^{k}_{\max}=J^{2}_{\max}$.
\end{corollary}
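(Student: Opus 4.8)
The plan is to prove the two inclusions $J^{k}_{\max}\subseteq J^{2}_{\max}$ and $J^{2}_{\max}\subseteq J^{k}_{\max}$ separately. The first is elementary; the second is the real content, and I would establish it by contradiction using the ``tight'' explosion criterion of Theorem \ref{tightExp}, whose whole point is that a possible singularity of $J^{k}_{\max}$ is already detected by the spatial derivatives of orders $0,1,2$, none of which depends on $k$.

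For $J^{k}_{\max}\subseteq J^{2}_{\max}$, I would take any interval $[t_0,T]$ carrying a weakly regular Markovian $k$-decoupling field $(u^{(0)},\ldots,u^{(k)})$ and simply discard the components of order larger than $2$. The defining forward/backward equations and decoupling conditions for the indices $i=0,1,2$ are literally identical in the definitions of a $k$- and of a $2$-decoupling field (the processes $X,\check{Y}^{(i)},\check{Z}^{(i)}$, $i=0,1,2$, and the constraint $\|\check{Y}^{(1)}\|_\infty<L_{\sigma,z}^{-1}$ carry over verbatim), and the weak regularity conditions for $i=1,2$ are a subset of those imposed for $i=1,\ldots,k$. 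Hence $(u^{(0)},u^{(1)},u^{(2)})$ is a weakly regular Markovian $2$-decoupling field on $[t_0,T]$, so $[t_0,T]\subseteq J^{2}_{\max}$, and taking the union over all such intervals gives the claim.

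For the reverse inclusion, suppose the containment is proper. By Theorem \ref{1-global} this forces $J^{k}_{\max}=(s^{k}_{\min},T]$ with $s^{k}_{\min}\in[0,T)$ (the alternative $J^{k}_{\max}=[0,T]$ would, together with the first inclusion, force equality). A short case distinction on whether $J^{2}_{\max}$ equals $[0,T]$ or $(s^{2}_{\min},T]$ then shows that proper containment places the compact interval $[s^{k}_{\min},T]$ inside $J^{2}_{\max}$. On this subinterval the unique weakly regular Markovian $2$-decoupling field satisfies, by definition of weak regularity, $\sup_{t\in[s^{k}_{\min},T]}L_{u^{(0)}(t,\cdot),x}<L_{\sigma,z}^{-1}$ and $\sup_{t\in[s^{k}_{\min},T]}\bigl(L_{u^{(1)}(t,\cdot),x}+L_{u^{(2)}(t,\cdot),x}\bigr)<\infty$. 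By uniqueness of the standard Markovian decoupling field (Theorem \ref{GLObalexistM}) together with Theorem \ref{roles}, the components $u^{(0)},u^{(1)},u^{(2)}$ of the $k$-decoupling field coincide on $J^{k}_{\max}$ with $u^{(0)}$ and its first two spatial derivatives, i.e.\ with the corresponding components of the $2$-decoupling field. Since $J^{k}_{\max}=(s^{k}_{\min},T]\subseteq[s^{k}_{\min},T]$, the three suprema occurring in conditions (E0), (E1), (E2) of Theorem \ref{tightExp} for the $k$-decoupling field are bounded by the finite quantities just displayed, so none of (E0), (E1), (E2) holds. This contradicts Theorem \ref{tightExp}, which guarantees that one of them must hold whenever $J^{k}_{\max}=(s^{k}_{\min},T]$. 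Hence the containment is not proper and $J^{k}_{\max}=J^{2}_{\max}$.

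The main obstacle is conceptual rather than computational: one must recognize that, once Theorem \ref{tightExp} has reduced the explosion criterion to the derivatives of orders $0,1,2$, this criterion is $k$-independent, and then handle the interval bookkeeping so that the potentially singular endpoint $s^{k}_{\min}$ still lies in the interior of the larger interval $J^{2}_{\max}$, where $u^{(0)},u^{(1)},u^{(2)}$ remain regular. The only delicate point is the identification of these first three components across the two decoupling fields, which rests on the uniqueness statements of Theorems \ref{GLObalexistM} and \ref{1-global} and on Theorem \ref{roles}.
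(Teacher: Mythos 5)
Your proposal is correct and follows essentially the same route as the paper: the inclusion $J^{k}_{\max}\subseteq J^{2}_{\max}$ by truncating the higher-order components, and the reverse inclusion by contradiction via Theorem \ref{tightExp}, using that weak regularity of the $2$-decoupling field on the compact interval $[s^{k}_{\min},T]\subseteq J^{2}_{\max}$ rules out each of (E0), (E1), (E2). Your version is, if anything, slightly more explicit than the paper's about the interval bookkeeping and the identification of $u^{(0)},u^{(1)},u^{(2)}$ across the two decoupling fields.
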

\begin{proof}
If $\left(u^{(0)},\ldots,u^{(k)}\right)$ is a weakly regular Markovian $k$-decoupling field, then $\left(u^{(0)},u^{(1)},u^{(2)}\right)$ is a weakly regular Markovian $2$-decoupling field according to definition. Therefore, $J^{k}_{\max}\subseteq J^{2}_{\max}$. Now assume that $J^{2}_{\max}$ is strictly larger. Then there exists a $t\in J^{2}_{\max}$ such that $J^{k}_{\max}=(t,T]$. Clearly, there exists a weakly regular Markovian $2$-decoupling field on $[t,T]$, which must coincide on $(t,T]$ with the first two components of the unique weakly regular Markovian $k$-decoupling field $\left(u^{(0)},\ldots,u^{(k)}\right)$. However, according to Theorem \ref{tightExp} we have:
$$\sup_{s\in J^{k}_{\max}} L_{u^{(0)}(s,\cdot),x}=L_{\sigma,z}^{-1}\quad\textrm{or}\quad\sup_{s\in J^{k}_{\max}} L_{u^{(1)}(s,\cdot),x}=\infty\quad\textrm{or}\quad\sup_{s\in J^{k}_{\max}} L_{u^{(2)}(s,\cdot),x}=\infty, $$
which contradicts the fact that $\left(u^{(0)},u^{(1)},u^{(2)}\right)$ is the restriction to $(t,T]$ of a weakly regular Markovian $2$-decoupling field on $[t,T]$.
\end{proof}

\begin{corollary}\label{highergood} Assume that $(\xi,(\mu,\sigma,f))$ satisfies ($k$-MLLC) for some $k\in\mathbb{N}_1$ and suppose $L_{\sigma,z}=0$. Then $J^{k}_{\max}=J^{1}_{\max}$.
\end{corollary}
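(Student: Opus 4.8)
The plan is to reproduce the structure of the preceding corollary, replacing $J^2_{\max}$ by $J^1_{\max}$ and exploiting that, when $L_{\sigma,z}=0$, Theorem \ref{tightExp} yields the sharper conclusion that only (E0) or (E1) can occur. First I would record the easy inclusion $J^{k}_{\max}\subseteq J^{1}_{\max}$: discarding the components $u^{(2)},\ldots,u^{(k)}$ turns a weakly regular Markovian $k$-decoupling field $\left(u^{(0)},\ldots,u^{(k)}\right)$ into a weakly regular Markovian $1$-decoupling field $\left(u^{(0)},u^{(1)}\right)$, since both the defining equations and the weak-regularity bounds required of a $1$-decoupling field form a subset of those required of a $k$-decoupling field.

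For the reverse inclusion I would argue by contradiction. Suppose $J^{1}_{\max}$ is strictly larger than $J^{k}_{\max}$; then necessarily $J^{k}_{\max}=(s^{k}_{\min},T]$ and $t:=s^{k}_{\min}\in J^{1}_{\max}$, so that $[t,T]\subseteq J^{1}_{\max}$ carries a weakly regular Markovian $1$-decoupling field, namely the restriction of the unique one furnished by Theorem \ref{1-global} applied with exponent $1$ (note that ($k$-MLLC) for $k\geq 1$ implies ($1$-MLLC)). By uniqueness this field must coincide on $(t,T]$ with the first two components $\left(u^{(0)},u^{(1)}\right)$ of the unique weakly regular Markovian $k$-decoupling field on $J^{k}_{\max}$.

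The crux is the improved dichotomy: since $L_{\sigma,z}=0$, Theorem \ref{tightExp} forces (E0) or (E1) to hold on $J^{k}_{\max}$, and crucially (E2) --- the only alternative mentioning $u^{(2)}$ --- is excluded. Using $L_{u^{(0)}(s,\cdot),x}=\|u^{(1)}(s,\cdot)\|_\infty$ (Theorem \ref{roles}) together with $L_{\sigma,z}^{-1}=\infty$, condition (E0) reads $\sup_{s\in J^{k}_{\max}}\|u^{(1)}(s,\cdot)\|_\infty=\infty$ and (E1) reads $\sup_{s\in J^{k}_{\max}}L_{u^{(1)}(s,\cdot),x}=\infty$; both involve only $u^{(0)}$ and $u^{(1)}$. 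But on $(t,T]$ these agree with a weakly regular Markovian $1$-decoupling field that is defined on the whole compact interval $[t,T]$, whose weak regularity gives $\sup_{s\in[t,T]}\|u^{(1)}(s,\cdot)\|_\infty<L_{\sigma,z}^{-1}=\infty$ and $\sup_{s\in[t,T]}L_{u^{(1)}(s,\cdot),x}<\infty$. Hence both (E0) and (E1) fail, contradicting Theorem \ref{tightExp} and establishing $J^{1}_{\max}\subseteq J^{k}_{\max}$.

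I expect no genuine obstacle, as the analytic work is entirely contained in Theorem \ref{tightExp}. The decisive point distinguishing this from the previous corollary is simply that the $L_{\sigma,z}=0$ branch of that theorem eliminates (E2), so that finiteness of the \emph{first}-derivative data on $[t,T]$ already contradicts the blow-up alternative. The only routine care needed is the bookkeeping that a $k$-decoupling field restricts to a $1$-decoupling field with inherited bounds, and the uniqueness identification of $\left(u^{(0)},u^{(1)}\right)$ on $(t,T]$ --- exactly as in the preceding corollary.
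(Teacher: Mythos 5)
Your proposal is correct and follows essentially the same route as the paper: the inclusion $J^{k}_{\max}\subseteq J^{1}_{\max}$ by discarding components, then a contradiction argument in which the restriction of a weakly regular Markovian $1$-decoupling field on a compact $[t,T]$ is played against the $L_{\sigma,z}=0$ branch of Theorem \ref{tightExp}, which excludes (E2) and leaves only (E0) and (E1), both refuted by weak regularity. Your added bookkeeping (that ($k$-MLLC) implies ($1$-MLLC), and the explicit identification of (E0), (E1) under $L_{\sigma,z}^{-1}=\infty$) is harmless and only makes explicit what the paper leaves implicit.
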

\begin{proof}
If $\left(u^{(0)},\ldots,u^{(k)}\right)$ is a weakly regular Markovian $k$-decoupling field, then $\left(u^{(0)},u^{(1)}\right)$ is a weakly regular Markovian $1$-decoupling field according to definition. Therefore, $J^{k}_{\max}\subseteq J^{1}_{\max}$. Now assume that $J^{1}_{\max}$ is strictly larger. Then there exists a $t\in J^{1}_{\max}$ such that $J^{k}_{\max}=(t,T]$. Clearly, there exists a weakly regular Markovian $1$-decoupling field on $[t,T]$, which must coincide on $(t,T]$ with the first two components of the unique weakly regular Markovian $k$-decoupling field $\left(u^{(0)},\ldots,u^{(k)}\right)$. However, according to Theorem \ref{tightExp} we have:
$$\sup_{s\in J^{k}_{\max}} L_{u^{(0)}(s,\cdot),x}=\infty\quad\textrm{or}\quad\sup_{s\in J^{k}_{\max}} L_{u^{(1)}(s,\cdot),x}=\infty, $$
which contradicts the fact that $\left(u^{(0)},u^{(1)}\right)$ is the restriction to $(t,T]$ of a weakly regular Markovian $1$-decoupling field on $[t,T]$.
\end{proof}

\begin{remark}\label{highergoodRem}

A straightforward consequence of Corollary \ref{highergood} is that if for $(\xi,(\mu,\sigma,f))$ as in the Corollary we have a Markovian decoupling field $u$ on some interval $[t,T]$ such that $u$ is twice weakly differentiable w.r.t.\ $x\in\mathbb{R}^n$ with bounded derivatives then this $u$ is already $(k+1)$ - times weakly differentiable w.r.t.\ $x$  with bounded derivatives, where $k\in\mathbb{N}_1$ is such that $(\xi,(\mu,\sigma,f))$ satisfy ($k$-MLLC):

Indeed, $(u,u_x)$ is a weakly regular Markovian $1$-decoupling field on $[t,T]$ (Proposition \ref{roles2}) and, therefore, $[t,T]\subseteq J^{1}_{\max} = J^{k}_{\max}$. In particular, there exists a weakly regular Markovian $k$-decoupling field on $[t,T]$. According to Theorem \ref{roles} and uniqueness of weakly regular Markovian decoupling fields (Theorem \ref{UNIqMREGulM}) the first component of the Markovian $k$-decoupling field is $u$ and the other components are its spatial derivatives up to order $k$. All derivatives are bounded and Lipschitz continuous in $x$, such that $u$ is $(k+1)$ - times weakly differentiable w.r.t.\ $x$  with bounded derivatives.
\end{remark}

\begin{corollary}\label{criter} Assume that $(\xi,(\mu,\sigma,f))$ satisfies ($k$-MLLC) for some $k\geq 2$ such that $L_{\sigma,z}=0$ and denote by $\left(u^{(0)},\ldots,u^{(k)}\right)$ the unique weakly regular Markovian $k$-decoupling field on $J^{k}_{\max}$. \\
Then $J^{k}_{\max}=[0,T]$ if and only if for all $t\in J^{k}_{\max}$ and all initial values $x\in\mathbb{R}^n$ the corresponding processes $X,\check{Y}^{(i)},\check{Z}^{(i)}$, $i=0,\ldots,k$, on $[t,T]$ associated with $\left(u^{(0)},\ldots,u^{(k)}\right)$ are such that $\check{Y}^{(1)}$ and $\check{Y}^{(2)}$ are uniformly bounded and these bounds can be chosen independently of $(t,x)$.
\end{corollary}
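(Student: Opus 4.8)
The plan is to set up a dictionary between the processes $\check{Y}^{(1)},\check{Y}^{(2)}$ and the fields $u^{(1)},u^{(2)}$, and then read the statement off the $L_{\sigma,z}=0$ form of Theorem \ref{tightExp} by contraposition. Both implications become short once this dictionary is in place, so I expect no serious obstacle; the only delicate point is the bookkeeping that identifies uniform-in-$(t,x)$ control of the processes with uniform-in-$t$ control of the sup-norms of the fields.

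First I would record the dictionary. Since $k\ge 2$, Theorem \ref{roles} guarantees that $u^{(i)}$ is the $i$-th spatial derivative of $u^{(0)}$, so (exactly as noted in the proof of Theorem \ref{tightExp}) we have $L_{u^{(0)}(t,\cdot),x}=\|u^{(1)}(t,\cdot)\|_\infty$ and $L_{u^{(1)}(t,\cdot),x}=\|u^{(2)}(t,\cdot)\|_\infty$ for every $t\in J^{k}_{\max}$. Next, the decoupling condition $\check{Y}^{(i)}_s=u^{(i)}(s,X_s)$, together with the fact that the field is deterministic (Theorem \ref{1-global}), yields two facts. On the one hand, $|\check{Y}^{(i)}_s|\le \sup_{s'\in[t,T]}\|u^{(i)}(s',\cdot)\|_\infty$ pointwise. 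On the other hand, evaluating at the initial time $s=t$ with the \emph{constant} datum $X_t=x$ gives the deterministic identity $u^{(i)}(t,x)=\check{Y}^{(i)}_t$; letting $x$ range over all of $\mathbb{R}^n$ shows that a bound on $\check{Y}^{(i)}$ uniform in $(t,x)$ is exactly a bound on $\|u^{(i)}(t,\cdot)\|_\infty$ uniform in $t$.

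For the forward implication I would assume $J^{k}_{\max}=[0,T]$. Then $\left(u^{(0)},\ldots,u^{(k)}\right)$ is weakly regular on the compact interval $[0,T]$, so by definition of weak regularity $\sup_{s\in[0,T]}\|u^{(1)}(s,\cdot)\|_\infty$ and $\sup_{s\in[0,T]}\|u^{(2)}(s,\cdot)\|_\infty$ are finite; these serve as global bounds valid for every $(t,x)$. The first inequality in the dictionary then bounds $\|\check{Y}^{(1)}\|_\infty$ and $\|\check{Y}^{(2)}\|_\infty$ by these constants, independently of $(t,x)$, which is precisely the asserted uniform boundedness. For the converse I would assume such uniform bounds $C_1,C_2$. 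By the second part of the dictionary, $\sup_{t\in J^{k}_{\max}}\|u^{(1)}(t,\cdot)\|_\infty\le C_1<\infty$ and $\sup_{t\in J^{k}_{\max}}\|u^{(2)}(t,\cdot)\|_\infty\le C_2<\infty$, hence $\sup_{t\in J^{k}_{\max}}L_{u^{(0)}(t,\cdot),x}<\infty$ and $\sup_{t\in J^{k}_{\max}}L_{u^{(1)}(t,\cdot),x}<\infty$. Because $L_{\sigma,z}=0$ forces $L_{\sigma,z}^{-1}=\infty$, this says exactly that neither (E0) nor (E1) of Theorem \ref{tightExp} holds.

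The conclusion then follows by contraposition together with the dichotomy in Theorem \ref{1-global}: if $J^{k}_{\max}$ were not all of $[0,T]$, then $J^{k}_{\max}=(s^{k}_{\min},T]$ with $s^{k}_{\min}\in[0,T)$, and the $L_{\sigma,z}=0$ version of Theorem \ref{tightExp} would force (E0) or (E1) to hold, contradicting what we just established. Hence $J^{k}_{\max}=[0,T]$. As noted, the one step that warrants care is the equivalence in the dictionary between the uniform control of the processes and the uniform control of the sup-norms of the fields, which relies on $u$ being deterministic and on $x$ exhausting $\mathbb{R}^n$; everything else is a direct application of Theorems \ref{roles}, \ref{1-global}, and \ref{tightExp}.
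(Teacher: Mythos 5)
Your proposal is correct and follows essentially the same route as the paper: the forward implication via the decoupling condition together with weak regularity on the compact interval $[0,T]$, and the converse by combining the identification $\check{Y}^{(i)}_t=u^{(i-1)}_x(t,x)$ from Theorem \ref{roles} with the $L_{\sigma,z}=0$ case of Theorem \ref{tightExp} to derive a contradiction. Your more explicit ``dictionary'' between uniform-in-$(t,x)$ bounds on the processes and uniform-in-$t$ bounds on $\|u^{(i)}(t,\cdot)\|_\infty$ is exactly the bookkeeping the paper performs implicitly.
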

\begin{proof}
Clearly, if $J^{k}_{\max}=[0,T]$ then $\check{Y}^{(1)},\check{Y}^{(2)}$ are uniformly bounded in the above sense due to the decoupling condition and the definition of weak regularity.

Now assume that $J^{k}_{\max}=(s^{k}_{\min},T]$ with some $s^{k}_{\min}\in[0,T)$. Let $t\in J^{k}_{\max}$ and $x\in\mathbb{R}^n$ be arbitrary. The corresponding $\check{Y}^{(1)},\check{Y}^{(2)}$ satisfy $\check{Y}^{(i)}_t=u^{(i-1)}_x(t,x)$, $i=1,2$, due to Theorem \ref{roles}. Therefore, $L_{u^{(i)}(t,\cdot),x}=\|u^{(i)}_x(t,\cdot)\|_\infty$, $i=0,1$, are uniformly bounded independently of $t$, if we assume that $\check{Y}^{(1)}$, $\check{Y}^{(2)}$ are uniformly bounded in the aforementioned sense. Thus, under this assumption we obtain a contradiction to Theorem \ref{tightExp}.
\end{proof}

\section{An illustrating example}\label{examp}

We consider the following (MLLC) problem: The dimensions are $n=2$, $m=d=1$. The functions $\xi,\mu,\sigma,f$ are such that
$$ \mu(t,x,y,z)=\left(0,z^2\right)^\top,\qquad \sigma(t,x,y,z)=\left(1,0\right)^\top, $$
$$ f(t,x,y,z)=0,\qquad\xi(x)=g\left(x^{(1)}\right)-\delta\left(x^{(2)}\right), $$
where $g, \delta:\IR\rightarrow\IR$ are deterministic and Lipschitz-continuous functions, such that $g$ is non-decreasing. This system was studied in \cite{Proemel2015} and its decoupling field was used to construct solutions to the Skorokhod embedding problem for a class of Gaussian processes. The existence of a Markovian decoupling field was shown in Lemma 4.1.\ of \cite{Proemel2015} in a rather straightforward application of the method of decoupling fields. The remainder of that work was dedicated to showing that this decoupling field is sufficiently smooth. This smoothness is needed for constructing strong solutions to the Skorokhod embedding problem.

Now let $\kappa\in\mathbb{N}$, $\kappa\geq 2$.
Our objective is to show that if $(\xi,(\mu,\sigma,f))$ satisfies ($\kappa$-MLLC), then there exists a unique weakly regular Markovian $\kappa$-decoupling field on $[0,T]$. For $\kappa=2$ this was already proven in \cite{Proemel2015} and this case is sufficient for the purposes of that work. However, the argumentation was rather elaborate and we want to conduct a more compact proof using the results of section \ref{mainR}. Such a more compact argument opens up the door for studying more complex systems, in particular the FBSDE appearing in the study of the Skorokhod embedding problem for more general diffusions.

Now according to Theorem \ref{1-global} a unique weakly regular Markovian $\kappa$-decoupling field $\left(u^{(0)},\ldots,u^{(\kappa)}\right)$ does exist on the interval $J^{\kappa}_{\max}$ and according to Theorem \ref{roles} the first component $u^{(0)}$ is the actual decoupling field and the other components are its spatial derivatives. Thus, it remains to show that $J^{\kappa}_{\max}=[0,T]$ holds. It is natural to use the criterion of Corollary \ref{criter} to this end: \\
Let us write down the backward dynamics of the processes $\check{Y}^{(1)}$ and $\check{Y}^{(2)}$ explicitly. For this purpose we essentially need to calculate $\varphi^{(1)}$ and $\varphi^{(2)}$. Note that $\check{Y}^{(1)}$ and $\varphi^{(1)}$ are $\mathbb{R}^{1\times 2}$ - valued, while $\check{Y}^{(2)}$ and $\varphi^{(2)}$ are $\mathbb{R}^{1\times 2\times 2}$ - valued. Now recall the recursive definition of $\varphi^{(k)}$:
$$ \varphi^{(k)}(\theta_k):=\varphi^{(k-1)}_x(\theta_{k-1})+
\sum_{i=0}^{k-1}\left(\varphi^{(k-1)}_{\check{y}^{(i)}}(\theta_{k-1})\check{y}^{(i+1)}+\varphi^{(k-1)}_{\check{z}^{(i)}}(\theta_{k-1})h^{(i+1)}(\theta_{i+1})\right) $$
$$ -\check{y}^{(k)} \left(\mu_{x}(\theta) + \mu_{y}(\theta)\check{y}^{(1)}+\mu_{z}(\theta)h^{(1)}(\theta_1)\right)-
\sum_{j=1}^d \check{z}^{(k,j)} \left(\sigma^{(j)} _{x}(\theta) + \sigma^{(j)} _{y}(\theta) \check{y}^{(1)} +\sigma^{(j)} _{z}(\theta) h^{(1)}(\theta_1)\right). $$
In our case $d=1$ so we write $\check{z}^{(k)}$ instead of $\check{z}^{(k,1)}$, by a slight abuse of notation. Now for $k=1$ we obtain
$$ \varphi^{(1)}(\theta_1)=f_x(\theta)+
\left(f_{\check{y}^{(0)}}(\theta)\check{y}^{(1)}+f_{\check{z}^{(0)}}(\theta)h^{(1)}(\theta_{1})\right) $$
$$ -\check{y}^{(1)} \left(\mu_{x}(\theta) + \mu_{y}(\theta)\check{y}^{(1)}+\mu_{z}(\theta)h^{(1)}(\theta_1)\right)-
\check{z}^{(k)} \left(\sigma_{x}(\theta) + \sigma_{y}(\theta) \check{y}^{(1)} +\sigma_{z}(\theta) h^{(1)}(\theta_1)\right), $$
where we used the definitions $\varphi^{(0)}=f$ and $\theta=\theta_{0}$. Due to the fact that $\sigma$ is a constant, we obtain 
$$ h^{(k)}(\theta_k)=\check{z}^{(k)}, $$
for all $k\geq 1$. An additional simplification occurs when we exploit $f=0$ and the fact that $\mu$ depends only on $\check{z}^{(0)}$: The definition of $\varphi^{(1)}(\theta_1)$ simplifies to 
$$ \varphi^{(1)}(\theta_1)= -\check{y}^{(1)} \left(\mu_{z}(\theta)h^{(1)}(\theta_1)\right) = -2\check{y}^{(1)}\left(0,\check{z}^{(0)}\right)^\top\check{z}^{(1)}. $$
Note that $\check{z}^{(1)}$ is actually $\mathbb{R}^{\left(1\times 1\right)\times 2}$ - valued, but we view it as simply $\mathbb{R}^{1\times 2}$ - valued, again by a slight abuse of notation. \\
Next we look at  $\varphi^{(2)}$ which describes the dynamics of $\check{Y}^{(2)}$:
$$ \varphi^{(2)}(\theta_2)=\left(\varphi^{(1)}_{\check{z}^{(0)}}(\theta_{1})\check{z}^{(1)}+\varphi^{(1)}_{\check{y}^{(1)}}(\theta_{1})\check{y}^{(2)}+\varphi^{(1)}_{\check{z}^{(1)}}(\theta_{1})\check{z}^{(2)}\right)-\check{y}^{(2)} \left(\mu_{z}(\theta)h^{(1)}(\theta_1)\right). $$
Note that $\varphi^{(1)}$ is linear in $\check{y}^{(1)}$ and linear in $\check{z}^{(1)}$. Thus, we obtain for all test vectors $v\in\mathbb{R}^2$:
$$ \varphi^{(2)}(\theta_2)v=-2\check{y}^{(1)}\left(0,\check{z}^{(1)}v\right)^\top\check{z}^{(1)}-2\left(\check{y}^{(2)}v\right)\left(0,\check{z}^{(0)}\right)^\top\check{z}^{(1)} $$
$$ -2\check{y}^{(1)}\left(0,\check{z}^{(0)}\right)^\top\check{z}^{(2)}v-2\check{y}^{(2)} \left(0,\check{z}^{(0)}\right)^\top\check{z}^{(1)}v. $$

Now let $t\in J^{\kappa}_{\max}$ and $x\in\mathbb{R}^2$ be arbitrary and consider the corresponding processes $X,\check{Y}^{(i)},\check{Z}^{(i)}$, $i=0,\ldots,\kappa$, on $[t,T]$. Note that $\check{Y}^{(1)},\check{Z}^{(1)}$ satisfy a backward SDE given by the generator $\varphi^{(1)}$. Now introduce a Brownian motion with drift $\tilde{W}$ by adding a drift given by the density $-2\check{Y}^{(1)}\left(0,\check{Z}^{(0)}\right)^\top$ to the $1$ - dimensional Brownian motion $W$. $\tilde{W}$ is in fact a Brownian motion under an equivalent probability measure $\mathbb{Q}$ (Girsanov's theorem). Under this measure $\check{Y}^{(1)}$ is a martingale. Thus, $\check{Y}^{(1)}$ is uniformly bounded by the Lipschitz constant of $\xi$. This already implies uniform boundedness of $u^{(1)}=u^{(0)}_x$.
Using Lemma 2.5.14.\ in \cite{Fromm2015} we also obtain uniform boundedness of $\check{Z}^{(0)}$. Also, using Theorem A.5.\ of \cite{Proemel2015} we obtain that $\check{Z}^{(1)}$ is a BMO-process w.r.t.\ $\mathbb{Q}$ and, therefore, w.r.t.\ $\mathbb{P}$. Both BMO-norms can be controlled independently of our choice for $t$ and $x$, due to uniform boundedness of $-2\check{Y}^{(1)}\left(0,\check{Z}^{(0)}\right)^\top$. Moreover, a straightforward adaptation of Lemma 4.8.\ of \cite{Proemel2015} to the interval  $[t,T]\subseteq J^{\kappa}_{\max}$ provides that $\check{Z}^{(1)}=\frac{\dx}{\dx x^{(1)}} u^{(1)}(\cdot, X)$, for which we use in particular that $\left(u^{(0)},\ldots,u^{(\kappa)}\right)$ is deterministic and continuous (Theorem \ref{1-global}). Using Theorem \ref{roles} we obtain that $\check{Z}^{(1)}=\frac{\dx}{\dx x^{(1)}} u^{(1)}(\cdot, X)$ is in fact a.e. equal to $\check{Y}^{(2)}(1,0)^\top$, which is linear in $\check{Y}^{(2)}$.

Next we look at the dynamics of $\check{Y}^{(2)}v$: If we perform the same measure change as above, we do not necessarily obtain a martingale; it is merely the term $-2\check{Y}^{(1)}\left(0,\check{Z}^{(0)}\right)^\top\check{Z}^{(2)}v$ which disappears as a consequence of the change. However, we still obtain uniform boundedness of the process $\check{Y}^{(2)}$ using Lemma A.4. of \cite{Proemel2015}. The applicability of this Lemma follows from boundedness of the processes $\check{Y}^{(1)}$, $\check{Z}^{(0)}$, the BMO-property of $\check{Z}^{(1)}$ and the fact that in the term $-2\check{Y}^{(1)}\left(0,\check{Z}^{(1)}v\right)^\top\check{Z}^{(1)}$ one of the $\check{Z}^{(1)}$ can be replaced by $\check{Y}^{(2)}(1,0)^\top$. 

Now Corollary \ref{criter} implies that $J^{\kappa}_{\max}=[0,T]$ for the ($\kappa$-MLLC) problem considered above.

{\small
\bibliography{quellen}{}

\providecommand{\bysame}{\leavevmode\hbox to3em{\hrulefill}\thinspace}
\providecommand{\MR}{\relax\ifhmode\unskip\space\fi MR }
\providecommand{\MRhref}[2]{%
  \href{http://www.ams.org/mathscinet-getitem?mr=#1}{#2}
}
\providecommand{\href}[2]{#2}
\begin{thebibliography}{MWZZ15}

\bibitem[AD90]{Ambrosio1990}
Luigi {Ambrosio} and Gianni {Dal Maso}, \emph{{A general chain rule for
  distributional derivatives}}, {Proc. Am. Math. Soc.} \textbf{108} (1990),
  no.~3, 691--702.

\bibitem[AF17]{ankirchner:hal-01615043}
Stefan Ankirchner and Alexander Fromm, \emph{{Optimal control of diffusion
  coefficients via decoupling fields}}, working paper or preprint, October
  2017.

\bibitem[AFKP17]{ankirchner:hal-01500311}
Stefan Ankirchner, Alexander Fromm, Thomas Kruse, and Alexandre Popier,
  \emph{{Optimal position targeting via decoupling fields}}, working paper or
  preprint, April 2017.

\bibitem[Del02]{Delarue2002}
Fran\c{c}ois Delarue, \emph{{On the existence and uniqueness of solutions to
  {FBSDE}s in a non-degenerate case}}, Stochastic Process. Appl. \textbf{99}
  (2002), no.~2, 209--286.

\bibitem[FI13]{Fromm2013}
Alexander Fromm and Peter Imkeller, \emph{{Existence, Uniqueness and Regularity
  of Decoupling Fields to Multidimensional Fully Coupled FBSDEs}}, Preprint
  arXiv:1310.0499 (2013).

\bibitem[FI17]{2017arXiv171106033F}
Alexander {Fromm} and Peter {Imkeller}, \emph{{Utility maximization via
  decoupling fields}}, ArXiv e-prints (2017).

\bibitem[FIP15]{Proemel2015}
Alexander Fromm, Peter Imkeller, and David Prömel, \emph{{An FBSDE approach to
  the Skorokhod embedding problem for Gaussian processes with non-linear
  drift}}, Electron. J. Probab. \textbf{20} (2015), 38 pp.

\bibitem[Fro15]{Fromm2015}
Alexander Fromm, \emph{{Theory and applications of decoupling fields for
  forward-backward stochastic differential equations}}, Ph.D. thesis,
  Humboldt-Universit\"{a}t zu Berlin, 2015.

\bibitem[MPY94]{Ma1994}
Jin {Ma}, Philip {Protter}, and Jiongmin {Yong}, \emph{{Solving
  forward-backward stochastic differential equations explicitly -- a four step
  scheme}}, {Probab. Theory Relat. Fields} \textbf{98} (1994), no.~3, 339--359.

\bibitem[MWZZ15]{ma:wu:zhang:15}
Jin Ma, Zhen Wu, Detao Zhang, and Jianfeng Zhang, \emph{On well-posedness of
  forward-backward {SDE}s---a unified approach}, Ann. Appl. Probab. \textbf{25}
  (2015), no.~4, 2168--2214. \MR{3349005}

\bibitem[MY99]{ma:yon:99}
Jin Ma and Jiongmin Yong, \emph{Forward-backward stochastic differential
  equations and their applications}, Lecture Notes in Mathematics, vol. 1702,
  Springer-Verlag, Berlin, 1999. \MR{1704232}

\bibitem[MYZ12]{ma:yin:zhan:12}
Jin Ma, Hong Yin, and Jianfeng Zhang, \emph{On non-{M}arkovian forward-backward
  {SDE}s and backward stochastic {PDE}s}, Stochastic Process. Appl.
  \textbf{122} (2012), no.~12, 3980--4004. \MR{2971722}

\bibitem[PT99]{Pardoux1999}
Etienne {Pardoux} and Shanjian {Tang}, \emph{{Forward-backward stochastic
  differential equations and quasilinear parabolic PDEs}}, {Probab. Theory
  Relat. Fields} \textbf{114} (1999), no.~2, 123--150.

\bibitem[PW99]{peng:wu:99}
Shige Peng and Zhen Wu, \emph{Fully coupled forward-backward stochastic
  differential equations and applications to optimal control}, SIAM J. Control
  Optim. \textbf{37} (1999), no.~3, 825--843. \MR{1675098}

\end{thebibliography}
\bibliographystyle{amsalpha}
}

\end{document}